\pgfplotsset{compat=1.11}
\pgfplotsset{
	colormap={parula}{
		rgb255=(53,42,135)
		rgb255=(15,92,221)
		rgb255=(18,125,216)
		rgb255=(7,156,207)
		rgb255=(21,177,180)
		rgb255=(89,189,140)
		rgb255=(165,190,107)
		rgb255=(225,185,82)
		rgb255=(252,206,46)
		rgb255=(249,251,14)}
}
\newcommand{\N}{\ensuremath{\mathbb{N}}}
\newcommand{\R}{\ensuremath{\mathbb{R}}}
\renewcommand{\atop}[2]{\genfrac{}{}{0pt}{}{#1}{#2}}
\newcommand{\zb}[1]{\ensuremath{\boldsymbol{#1}}}
\newcommand{\tT}{{\scriptscriptstyle \operatorname{T}}}
\DeclareMathOperator*{\argmin}{arg\,min}
\DeclareMathOperator*{\diag}{diag}
\DeclareMathOperator{\diver}{div}
\newcommand{\IC}{\operatorname{IC}}
\newcommand{\TGV}{\operatorname{TGV}}
\newcommand{\HH}{\operatorname{H}}
\newcommand{\TV}{\operatorname{TV}}
\newcommand{\G}{\mathcal{G}}
\renewcommand{\sl}{\mathfrak{sl}}
\newcommand{\jj}{\mathbf{j}}
\newcommandx{\ubar}[1]{\underaccent{\bar}{#1}}
\newtheorem{theorem}{Theorem}
\newtheorem{remark}[theorem]{Remark}
\newtheorem{corollary}[theorem]{Corollary}
\newtheorem{proposition}[theorem]{Proposition}
\newtheorem{lemma}[theorem]{Lemma}
\newtheorem{example}[theorem]{Example}
\begin{document}	
	
	\title{Strain Analysis by a	Total Generalized Variation Regularized Optical Flow Model}
	\date{\today}
	\author{ 
		Frank Balle\footnotemark[1], 
		Tilmann Beck\footnotemark[1], 
		Dietmar Eifler\footnotemark[1], 
		Jan Henrik Fitschen\footnotemark[2], \\ 
		Sebastian Schuff\footnote{Department of Mechanical and Process Engineering, University of Kaiserslautern, Germany,		\{balle,beck,eifler,schuff\}@mv.uni-kl.de.}, \ and 
		Gabriele Steidl\footnote{Department of Mathematics,
			University of Kaiserslautern, Germany,
			\{fitschen, steidl\}@mathematik.uni-kl.de.
		}
	}
	\maketitle
	
	\begin{abstract}
		In this paper we deal with the important problem
		of estimating the local strain tensor from a sequence of micro-structural images 
		realized during deformation tests of engineering materials.
		Since the strain tensor is defined via the Jacobian of the displacement field,
		we propose to compute the displacement field by a variational model 
		which takes care of properties of the Jacobian of the displacement field. 
		In particular we are interested in areas of high strain.		
		The data term of our variational model relies on the brightness invariance
		property of the image sequence.
		As prior we choose the second order total generalized variation of the displacement field.
		This prior splits the Jacobian of the displacement field 
		into a smooth and a non-smooth part. 
		The latter reflects the material cracks. 
		An additional constraint is incorporated to handle physical properties of the non-smooth part 
		for tensile tests.
		We prove that the resulting convex model has a 
		minimizer
		and show how a primal-dual method can be applied to find 
		a minimizer.
		The corresponding algorithm has the advantage that the strain tensor is directly computed 
		within the iteration process.
		Our algorithm is further equipped with a coarse-to-fine strategy to cope with larger displacements. 		
		Numerical examples with simulated and experimental data demonstrate the 
		very good performance of our algorithm.
		In comparison to state-of-the-art engineering software for strain analysis 
		our method can resolve local phenomena much better.
	\end{abstract}
	
	\section{Introduction} \label{sec:strain:intro}
	The (Cauchy) strain tensor plays a fundamental role in mechanical engineering for 
	deriving local mechanical properties of materials. 
	In this paper, we are interested in estimating the strain tensor 
	from a sequence of microstructural images of a certain material 
	acquired during in-situ deformation tests. An experimental setup of a tensile test is
	shown in Figure~\ref{fig:sem_image}.
	
	%
	\begin{figure}
		\centering
		\begin{subfigure}[t]{0.98\textwidth}\centering
			\includegraphics[width=.95\textwidth]{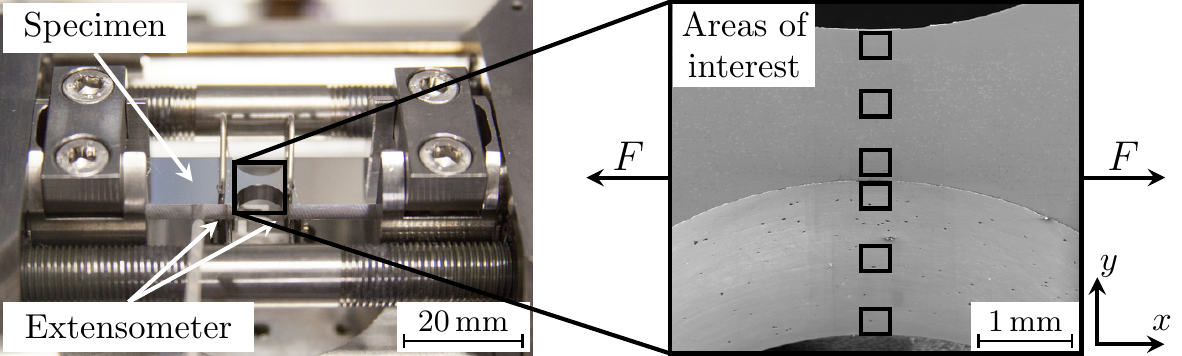}
			\caption{Experimental setup for the tensile test inside a scanning electron microscope.
			Left: Test material within the microscope. Right: Part of the material with the regions of interest,
			where a sequence of images (micrographs) is taken for increasing load.}
		\end{subfigure}\\[2ex]
		\begin{subfigure}[t]{0.98\textwidth}\centering	
			\raisebox{-.5\height}{
				\includegraphics[width=.33\textwidth]{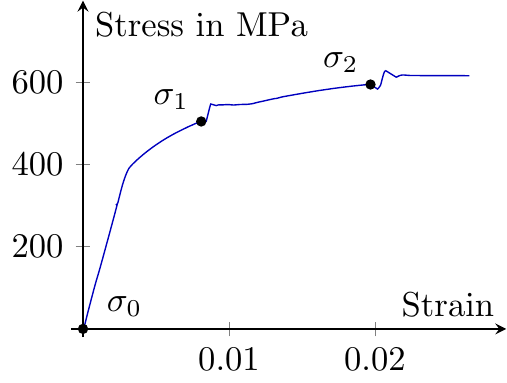}}
			\begin{tabular}{ccc}
				{\footnotesize $\sigma_0 = 0$\,MPa}
				&
				{\footnotesize $\sigma_1 = 505$\,MPa}
				&
				{\footnotesize $\sigma_2 = 595$\,MPa}			
				\\		
				\begin{tikzpicture}\tikzstyle{every node}=[font=\tiny] 
				\begin{axis}[ 
				width=.33\textwidth, 
				enlargelimits=false, 
				hide axis, 
				axis equal image, 
				axis on top, 
				] 
				\addplot
				graphics[xmin=0, xmax=683, ymin=0, ymax=636] 
				{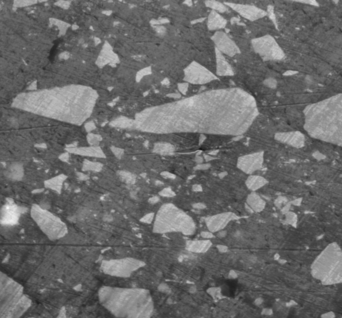};
				\draw[draw=none,fill=white] (493,10) rectangle (673,130);
				\draw[{|[width=4pt]}-{|[width=4pt]},thick] (503,40) -- (663,40) node[above=-2pt,pos=0.5] {5\,\textmu m}; 
				\end{axis} 
				\end{tikzpicture} 
				&
				\begin{tikzpicture}\tikzstyle{every node}=[font=\tiny] 
				\begin{axis}[ 
				width=.33\textwidth, 
				enlargelimits=false, 
				hide axis, 
				axis equal image, 
				axis on top, 
				] 
				\addplot
				graphics[xmin=0, xmax=683, ymin=0, ymax=636] 
				{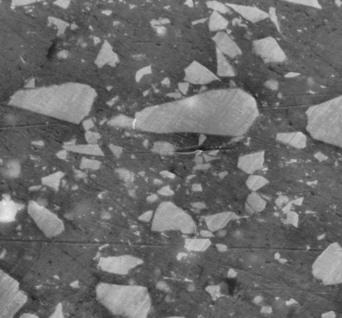};
				\draw[draw=none,fill=white] (493,10) rectangle (673,130);
				\draw[{|[width=4pt]}-{|[width=4pt]},thick] (503,40) -- (663,40) node[above=-2pt,pos=0.5] {5\,\textmu m}; 
				\end{axis} 
				\end{tikzpicture}
				&
				\begin{tikzpicture}\tikzstyle{every node}=[font=\tiny] 
				\begin{axis}[ 
				width=.33\textwidth, 
				enlargelimits=false, 
				hide axis, 
				axis equal image, 
				axis on top, 
				] 
				\addplot
				graphics[xmin=0, xmax=683, ymin=0, ymax=636] 
				{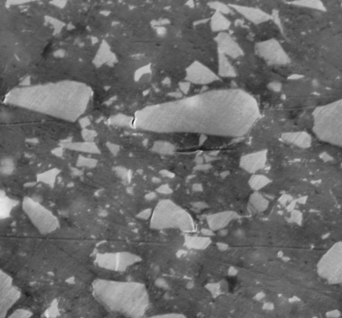};
				\draw[draw=none,fill=white] (493,10) rectangle (673,130);
				\draw[{|[width=4pt]}-{|[width=4pt]},thick] (503,40) -- (663,40) node[above=-2pt,pos=0.5] {5\,\textmu m}; 
				\end{axis} 
				\end{tikzpicture}
			\end{tabular}
			\caption{Stress-strain curve with three selected micrographs taken under increasing load. The stress $\sigma=\frac{F}{A_0}$ is defined as the applied force $F$ divided by the initial cross section of the specimen $A_0$. The (global) strain is defined as the relative elongation of the specimen measured by the extensometer.}\label{fig:sem_image:b}
		\end{subfigure}
		\caption{ \label{fig:sem_image}
			Illustration of an in-situ tensile test.
		} 
	\end{figure}
	%
	
	Let $u \coloneqq (u_1,u_2)^\tT$ be the displacement field that describes how each point moves from one image to another.       
	For deformations of a continuum body, the strain tensor $\varepsilon$ 
	is defined via the Jacobian of the displacement field $u$ by
	\begin{align} \label{eq:strain}
	\varepsilon &= 
	\begin{pmatrix}	\varepsilon_{11} & \varepsilon_{12} \\ 
	\varepsilon_{12} & \varepsilon_{22} 
	\end{pmatrix}
	\coloneqq 
	\frac12 \left( \nabla u + \nabla u^\tT \right)\\
	&=
	\begin{pmatrix}	\partial_x u_1 & \frac12 (\partial_y u_1 +  \partial_x u_2) \\ 
	\frac12 (\partial_y u_1 +  \partial_x u_2) & \partial_y u_2 
	\end{pmatrix}.
	\end{align}
	
	State-of-the-art software packages for strain analysis like 
	Veddac~\cite{Veddac}, VIC~\cite{VIC}, NCorr~\cite{BAA14}, and \cite{SWP99,TK03,WS00}
	apply correlation based methods to estimate the displacement field and use the result 
	to compute the strain tensor. Roughly speaking, correlation based methods compare certain windows 
	around each pixel in a predefined search window.
	Due to the extent of the window around the pixels and since the displacement is often only computed 
	on a coarser grid to reduce the computational effort, the local resolution of these methods is limited.
	This is especially a drawback when we are interested in the local strain behavior and the appearance of microstructural damage.
	
	In this paper, we propose to use a variational model to compute the displacement field $u$ from a sequence of images $f$. 
	Variational models are composed of a data term $E_{\text{Data}}(u;f)$, which incorporates the information
	given by the image sequence, and a prior $E_{\text{Reg}}(u)$, which has to be chosen in such a way that its minimizer reflects 
	known or desirable properties of the displacement field:
	\begin{align} \label{whole_energy}
	\argmin_u E(u), \quad E(u) \coloneqq E_{\text{Data}}(u;f) + \lambda E_{\text{Reg}}(u).
	\end{align}
	
	In image processing, the apparent displacement $u$ between image frames is known as optical flow.
	Variational methods for optical flow estimation go back to Horn and Schunck \cite{HS81}, and there
	is a vast number of refinements and extensions of their approach. We refer to \cite{BPS14} 
	for a comprehensive overview. 
	In particular, optical flow models with priors containing higher order derivatives of the flow
	were  successfully used, e.g.~in  \cite{ACGKMSS08,HK14,CMP02,TPCB08,YSM07,YSS07,RBP14,VRS13,BDB13}.	
	
	Although it seems natural to apply ideas from variational optical flow models also for
	strain analysis, such methods have rarely been addressed in the literature.
	The papers \cite{ACGKMSS08,CMP02} aim at
	computing derivatives simultaneously to the optical flow field 
	but are not related to engineering applications.
	The computation of the (Lagrangian) strain tensor by a variational method
	was addressed in \cite{HWSSD13}. 
	There, the authors proposed a {\it smooth} fourth order optical flow model 
	which directly computes the strain tensor from an image sequence obtained in 
	a biaxial tensile test with an elastomer. 
	In contrast to our work, they were interested 
	in the macro-scale behavior and compute the minimizer of their smooth energy function 
	by solving the corresponding Euler-Lagrange equations. 
	
	We propose a data term $E_{\text{Data}}(u;f)$ based on the brightness invariance
	assumption of the image sequence, which is usual in optical flow estimation.
	The more interesting part is the choice of the prior, where we have in mind that
	we are interested in local variations of the Jacobian of the displacement field, i.e.~of the strain. 
	In the conference paper \cite{own:strain}, we built up on the assumption that the displacement field $u$
	is additively composed of a non-smooth part $v$ and a smooth part $w$.
	Then, we penalized the Jacobian of $v$ and the second order derivative of $w$
	by an infimal convolution prior. The infimal convolution (IC) of first and second order derivatives was introduced 
	in imaging by Chambolle and Lions \cite{CL97}. For a recent generalization called ICTV, we refer to \cite{HK14}.
	In this paper, we will see that it is better for our task to split
	the Jacobian of $u$ instead of $u$ itself.
	To this end, we apply the second order total generalized variation (TGV) of the  displacement field as prior.
	For tensile tests, a physical prior on the non-smooth part of the strain is added.
	The TGV of functions was introduced by Bredies, Kunisch, and Pock~\cite{BKP10} for image restoration tasks
	and has meanwhile found many applications. 
	For a discrete variant, see also \cite{SS08,SST11}.
	In particular, TGV regularization was successfully used in connection with optical flow in \cite{RBP14,VRS13,BDB13}.
	We mention that there is a software package corresponding to \cite{VRS13} available since 2015.
	Our first code in \cite{own:strain} was written before any public software package with TGV or infimal convolution prior was available.
	Our applications show the potential of the method in an impressive way.
	
	The resulting variational model is convex and a minimizer can be computed with
	primal-dual methods, which are meanwhile standard in image processing.
	The corresponding algorithm computes the strain tensor directly 
	within the iteration process.
	This is an advantage compared to standard engineering software, which estimates the displacement field by
	correlation based methods first and then computes its Jacobian.
	Our algorithm is further equipped with a coarse-to-fine strategy to cope with larger displacements 
	as proposed, e.g., in \cite{Ana89,BBPW04,SRB10,SRB13}.
	
	We demonstrate by artificial and real-world examples that the
	proposed model leads to very good results, which cannot be obtain by state-of-the-art software packages.
	With our extensive numerical results, we aim at convincing engineers of the advantages of the proposed method.

	\section{Model} \label{sec:strain:model}
	We start with the continuous model and turn to the discrete model
	for digital images afterwards. 
	The reason is that the discrete notation appears more
	clumsy and it is easier to get the clue from the continuous notation.
	
	\paragraph{Continuous model.}
	Suppose that we are given gray-valued images 
	\begin{align}
	f_1, f_2 \colon \mathbb R^2 \supset \Omega\rightarrow \R.
	\end{align}
	We are interested in the flow field $u = (u_1,u_2) \colon \Omega \rightarrow \R^2$,
	which describes how $f_1$ transforms into $f_2$.
	Here, we focus on the brightness invariance assumption, which reads with $\zb x \coloneqq (x,y)$ as
	\begin{equation} \label{bas_flow}
	f_1(\zb x) - f_2\big(\zb x + u(\zb x) \big) = f_1(\zb x) - f_2\big(x + u_1(\zb x),y+u_2(\zb x) \big) \approx 0.
	\end{equation}
	A first order Taylor expansion around an initial optical flow field $\bar u = (\bar u_1,\bar u_2)$
	gives
	\begin{align} \label{taylor_bas_flow}
	f_2\big( \zb x + u(\zb x)) 
	&\approx 
	f_2 \big(\zb x + \bar u(\zb x)\big) 
	+ \left\langle
	\begin{pmatrix} \partial_x f_2 \\ \partial_y f_2 \end{pmatrix}
	\big( \zb x + \bar u (\zb x)\big), u(\zb x) - \bar u(\zb x) \right\rangle.
	\end{align}
	Plugging \eqref{taylor_bas_flow} into \eqref{bas_flow}, we obtain
	\begin{align}	
	0  &\approx f_1(\zb x) - f_2\big( \zb x + \bar u (\zb x)\big) - \left\langle
	\begin{pmatrix} \partial_x f_2 \\ \partial_y f_2 \end{pmatrix} \big( \zb x + \bar u (\zb x)\big), u(\zb x) - \bar u(\zb x) \right\rangle.
	\end{align}	
	Later, we apply a coarse-to-fine scheme \cite{Ana89,BBPW04} 
	and use the result from one scale as an initialization for the next one.
	Applying a non-negative increasing function
	$\varphi \colon \mathbb R \rightarrow \mathbb R_{\ge 0}$
	results in the  data term
	\begin{align}\label{continuous_data_term}
	\int_{\Omega} \varphi 
	\left(-
	\left\langle
	\begin{pmatrix} 
	\partial_x f_2 \big( \zb x + \bar u (\zb x) \big) \\ 
	\partial_y f_2 \big( \zb x + \bar u (\zb x) \big)  
	\end{pmatrix},
	u(\zb x) - \bar u(\zb x) \right\rangle 
	- f_2 \big( \zb x + \bar u (\zb x) \big) + f_1(\zb x) \right)
	\, d \zb x. \hspace{0.6cm}
	\end{align}	
	For the task at hand, we need a prior which recognizes local changes
	in the displacement field. 
	Therefore, it seems to be useful to consider the smooth global and non-smooth local behavior 
	of the displacement field. 
	For $z \in \mathbb R^{d_1,d_2}$ we denote by $|z|$ the square root of the sum of the squared components of $z$,
	i.e., its Frobenius norm.

	Let us first consider the following Banach spaces of scalar functions $u\colon \Omega \rightarrow \mathbb R$:
	\begin{itemize}
	\item[-] functions of bounded variation $BV(\Omega)$ with norm
$	\|u\|_{\mathrm{BV}} \coloneqq \|u\|_{L_1} + \mathrm{TV}(u)$,
where
\begin{align}
	\mathrm{TV}(u) \coloneqq \sup_{\varphi \in C_c^\infty(\Omega,\R^2), |\varphi|  \le 1}  \int_{\Omega} u \diver \varphi \ dx,
\end{align}
where $C_c^\infty(\Omega,\R^2)$ denotes the space of compactly supported, smooth functions on $\Omega$ mapping to $\R^2$.
The distributional first order derivative $Du$ is a vector-valued Radon measure with total variation $|Du|(\Omega) = \mathrm{TV}(u)$.
In particular for $u \in W^{1,1}(\Omega)$ the Sobolev space of functions 
with absolutely integrable weak first order partial derivatives, the regularizer becomes
$$	\mathrm{TV}(u) = \int_{\Omega} \lvert \nabla u \rvert \ dx,$$
\item[-]
functions of bounded Hessian $BH(\Omega)$ with norm
$
\|u\|_{\mathrm{BH}} \coloneqq  \|u\|_{\mathrm{BV}} + \mathrm{TV}_2(u)
$,
where
\begin{align}
\mathrm{TV}_2(u) \coloneqq \sup_{ \varphi \in C_c^2(\Omega,\R^{2,2}) , |\varphi| \le 1 } \int_{\Omega} u  \diver^2 \varphi \ dx
\end{align}
and
$
  \diver^2 \varphi
  \coloneqq
  \partial_{xx} \varphi_{11} + \partial_{xy} \varphi_{12} +
  \partial_{yx} \varphi_{21} + \partial_{yy} \varphi_{22}.
$
If $u \in W^{2,1}(\Omega)$ the term becomes
$$	
\mathrm{TV}_2(u) = \int_{\Omega} |\nabla^2 u | \ dx, \quad   \nabla^2 u = \begin{pmatrix} u_{xx}& u_{xy}\\ u_{yx} &u_{yy} \end{pmatrix},
$$
\item[-] of functions of total generalized variation of order two
$\mathrm{BGV}_\lambda^2(\Omega)$, $\lambda = (\lambda_1,\lambda_1) \in\mathbb R^2_{>0}$~\cite{BKP10}
with norm
$
			\|u\|_{\mathrm{BGV}_\lambda^2} \coloneqq \|u\|_{L_1} + \mathrm{TGV}_\lambda^2(u),
$
where
\begin{align}\label{intro:TGV}
			\TGV_\lambda^2(u)
			\coloneqq
			\sup_{\atop{
			\varphi \in C_c^\infty(\Omega,\mathrm{Sym}(\R^2)), |\varphi| \le \lambda_2,}
			{
			\left(
			(\diver \varphi_{1})^2 +  (\diver \varphi_{2})^2\right)^{1/2}  \le \lambda_1
			}}
			\int_{\Omega} u \diver^2\varphi \ d x .
\end{align}
Here $\mathrm{ Sym}(\R^2)$ denotes the space of symmetric $2\times 2$ matrices, i.e., we have $\varphi_{12} = \varphi_{21}$.
In \cite{BV11}, it was shown that $\mathrm{TGV}_\lambda$ can alternatively be characterized as
			\begin{align}\label{intro:TGV_meas}
				\mathrm{TGV}(u) = \inf_{a \in \mathrm{BD}(\Omega,\mathbb R^2)} &
				\left\{ \lambda_1 |D u - a|(\Omega)
				 + \lambda_2 |\mathcal{E} a|(\Omega) \right\},
			\end{align}
			where $\mathrm{BD}(\Omega,\mathbb R^2)$
			is the space of functions with bounded deformation,
			and $\mathcal{E} a$ is the weak symmetrized derivative of $a$.
	Under sufficient smoothness assumptions, the term reduces to
\begin{align}\label{intro:TGV_smooth}
	\mathrm{TGV}(u) = \inf_{a} \left\{ \lambda_1 \int_{\Omega} |\nabla u - a| \, dx  
	+  \lambda_2 \int_{\Omega} |\tilde \nabla a| \ d x \right\},
\end{align}
where $|\tilde \nabla a| \coloneqq \left( a_{1,x}^2 + \frac12 (a_{1,y} + a_{2,x})^2 + a_{2,y}^2 \right)^\frac12$.
\end{itemize}

In our task we deal with vector fields $u\colon \Omega \rightarrow \mathbb R^2$ and define $\TV(u)$, $\TV_2(u)$ and 
$\TGV(u)$ componentwise.

	In the previous paper \cite{own:strain}, we assumed that 
	$u= v+ w$ 
	can be decomposed additively into a (componentwise) non-smooth part
	$v \in \mathrm{ BV}(\Omega, \mathbb R^2)$ and a smooth part $w \in \mathrm{ BH}(\Omega,\mathbb R^2)$
	and suggested under the above smoothness assumptions the prior
	\begin{align} \label{IC_cont}
	\IC(u) \coloneqq& \inf_{v+w=u} \left\{ \int_{\Omega} \lambda_1 |\nabla v| + \lambda_2 |\nabla^2 w| \, d\zb x \right\}\\
	=& 
	\inf_{w} \left\{ \int_{\Omega} \lambda_1 |\nabla u - \nabla w| + \lambda_2 |\nabla^2 w| \, d\zb x \right\},
	\end{align}
	which is the infimal convolution of $|\nabla \cdot|$ and $|\nabla^2 \cdot |$.
	
	In this paper, we decompose $\nabla u = a + \tilde a$ instead of $u$
	and use the TGV of the displacement field $u$ as a prior,
	which reads under the corresponding smoothness assumptions as
	\begin{align} \label{TGV_cont}
	\TGV(u) \coloneqq& \inf_{a+\tilde a = \nabla u} \left\{ \int_{\Omega} \lambda_1 | \tilde a| + \lambda_2 |\tilde \nabla a| \, d\zb x \right\} \\
	=& \inf_{a} \left\{ \int_{\Omega} \lambda_1 |\nabla u - a| + \lambda_2 |\tilde \nabla a| \, d\zb x\right\}.
	\end{align}
	Of course, if $a$ is a conservative vector field, 
	i.e., the Jacobian of some function $w$, then $\IC$ and $\TGV$
	coincide. However, the latter is in general not the case. 
	In our applications, cracks are reflected by the non-smooth part $\tilde a$ of $\nabla u$.
	Here, it is important to notice the following difference:
	the IC prior \eqref{IC_cont} enforces sparsity of the Jacobian $\nabla v$ of the non-smooth part $v$ of $u$;
	while the TGV prior enforces sparsity of the non-smooth part $\tilde a$ of the Jacobian $\nabla u$ of $u$.
		
	\paragraph{Discrete model.}
	In practice, we are concerned with images
	$f_1, f_2 \colon \mathcal{G} \rightarrow \R$
	defined on a two-dimensional rectangular grid 
	$\mathcal{G} \coloneqq \{1,\ldots,N_1\} \times \{1,\ldots,N_2\}$.
	Besides this notion, we use the alternative representation of an image $f$ as a vector of length
	$N = |\mathcal{G}| = N_1 N_2$.
	In the task at hand, we are given micrographs $f_1$ and $f_2$, e.g.~from a tensile test, 
	corresponding to different loads as depicted in Figure~\ref{fig:sem_image}.
	Considering only grid points
	$\zb x = \jj \in \mathcal{G}$	
	and assuming for a moment that also the partial derivatives of $f_2$ are given,
	the data term in \eqref{continuous_data_term} becomes
	\begin{align}\label{partderiv_f2}
	\sum_{\jj \in \mathcal{G}} \varphi 
	\left(-
	\left\langle
	\begin{pmatrix} 
	\partial_x f_2 \big( \jj + \bar u (\jj) \big) \\ 
	\partial_y f_2 \big( \jj + \bar u (\jj) \big)  
	\end{pmatrix},
	u(\jj) - \bar u(\jj) \right\rangle 
	- f_2 \big( \jj + \bar u (\jj) \big) + f_1(\jj) \right).
	\end{align}	
	This term is only well defined if $\jj + \bar u(\jj) \in \mathcal{G}$.
	Here, we use bilinear interpolation to compute the required values of $f_2$ between grid points.
	If points lie outside the grid, we assume 
	mirror extension of the image at the boundary.
	By $\nabla_x f$ we denote
	the forward differences in $x$-direction with Neumann boundary conditions defined as
	\begin{align}
	(\nabla_x f)(\jj) \coloneqq \begin{cases}
	f(\jj+(1,0)) - f(\jj) & \mathrm{if} \  \jj+(1,0) \in \mathcal{G}, \\
	0 & \mathrm{otherwise},
	\end{cases}
	\qquad \jj \in \mathcal{G}.
	\end{align}
	Again, we use the notation $\nabla_x f$ both for the function and the vectorized version.
	Furthermore, we denote also the corresponding difference matrix by $\nabla_x$.
	The differences in $y$--direction can be defined analogously.
	We replace the partial derivatives of $f_2$ in \eqref{partderiv_f2} by the forward differences $\nabla_x f_2, \nabla_y f_2$ and apply 
	once more bilinear interpolation to compute values between grid points
	which results in the data term
	\begin{align}
	\sum_{\jj \in \mathcal{G}} \varphi 
	\left(-
	\left\langle
	\begin{pmatrix} 
	\nabla_x f_2(\jj+{\bar u(\jj)}) \\ 
	\nabla_y f_2(\jj+{\bar u(\jj)})  
	\end{pmatrix},
	u(\jj) - \bar u(\jj) \right\rangle 
	- f_2(\jj+{\bar u(\jj)}) + f_1(\jj) \right).
	\end{align}	
	For our task, we use the function
	\begin{align}
	\varphi(t) \coloneqq \lvert t \rvert
	\end{align} 
	because it is well-known that the $\ell_1$-norm reduces the influence of outliers.
	Then the data term can be rewritten in the convenient matrix-vector notation  
	\begin{align}\label{e_flow}
	E_{\text{Data}}(u;f_1,f_2) &\coloneqq  \| (A \ B) u + c\|_1 = \| A u_1 + B u_2 + c\|_1, 
	\end{align}
	where
	\begin{align} \label{a1}
	A \coloneqq &\diag \Big(\left(\nabla_x f_2(\jj+\bar u(\jj))\right)_{\jj \in \G} \Big), \quad
	B  \coloneqq \diag \Big(\left(\nabla_y f_2(\jj+\bar u(\jj))\right)_{\jj \in \G} \Big), \\
	c \coloneqq &- \diag \Big(\left(\nabla_x f_2(\jj+\bar u(\jj))\right)_{\jj \in \G} \Big) \bar u_1 \\
	&- \diag \Big(\left(\nabla_y f_2(\jj+\bar u(\jj))\right)_{\jj \in \G} \Big) \bar u_2\\
	&+ \left(f_2(\jj+\bar u(\jj))\right)_{\jj \in \G}  - f_1.
	\label{only_b}
	\end{align}
		
	For the discrete versions of the TGV prior, we also need
	the backward differences $\tilde \nabla_x u$ in $x$-direction, i.e., 
	\begin{align}
	(\tilde \nabla_x u)(\jj) = 
	\begin{cases}
	u(\jj) - u(\jj-(1,0)) & \mathrm{if} \  \jj \pm (1,0) \in \mathcal{G}, \\
	0 & \mathrm{otherwise},
	\end{cases}
	\qquad \jj \in \mathcal{G}
	\end{align}
	and analogously for the $y$-direction.
	We set
	\begin{equation}\label{not:1sttil}
	\nabla \coloneqq 
	\begin{pmatrix} 
	\nabla_x \\ \nabla_y 
	\end{pmatrix},
	\quad
	\tilde \nabla \coloneqq 
	\begin{pmatrix}
	\tilde \nabla_x & \\
	\frac12 \tilde \nabla_y & \frac12 \tilde \nabla_x \\
	& \tilde \nabla_y
	\end{pmatrix},
	\end{equation}
	and use the tensor products 
	\begin{align}
	\boldsymbol{\nabla} u \coloneqq (I_2 \otimes \nabla) u, \quad
	\boldsymbol{\tilde \nabla} u \coloneqq (I_2 \otimes \tilde \nabla) u.
	\end{align}
	Denoting by $\| \cdot\|_{2,1}$ the
	mixed norm defined for $x \in \R^{dN}$ as
	\begin{align}
	\| x \|_{2,1} \coloneqq \sum_{i=1}^N \|(x_{i+jN})_{j=0}^{d-1}\|_2, 
	\end{align} 
	the discrete counterpart of the TGV prior in~\eqref{TGV_cont} reads
	\begin{align}
	\TGV(u)  \coloneqq \min_{a} \left\{ \lambda_1( \| \boldsymbol{\nabla} u - a \|_{2,1} + \lambda_2 \| \boldsymbol{\tilde \nabla} a \|_{2,1} )\right\}.
	\tag{$\text{TGV}$} 
	\label{tgv}
	\end{align}
		
	In summary, our model is given by
	\begin{align} 	\label{TGV-model}	   
	E_{\TGV}(u) = \|(A \ B) u + c \|_1
	+ \min_{a} \left\{ \lambda_1 \| \boldsymbol{\nabla} u - a \|_{2,1} + \lambda_2 \| \boldsymbol{\tilde \nabla} a \|_{2,1} \right\}.
	\end{align}
		From a physical point of view, it makes sense to incorporate additional constraints on the displacement field or the strain.	
		In the TGV model, we split the strain into parts representing the global and local features.
		With this interpretation, it is a reasonable assumption that for tensile tests the strain corresponding 
		to local phenomena is positive in the direction of the applied force.
		This is motivated by the fact that cracks can only open or widen during a tensile test.
		Mathematically, if a force is applied in $x$-direction, we make the restriction 
		$
		\boldsymbol{\nabla}_x u_1 - a_1 \ge 0,
		$
		so that the model becomes
	\begin{align} 	\label{flow:constr:tgv}	   
	\tilde E_{\TGV}(u) = &\|(A \ B) u + c \|_1
	+ \min_{a} \left\{ \lambda_1 \| \boldsymbol{\nabla} u - a \|_{2,1} + \lambda_2 \| \boldsymbol{\tilde \nabla} a \|_{2,1} \right\}\\
	&\mbox{subject to} \quad \nabla_x u_1 - a_1 \ge 0.
	\end{align}	
	
	\begin{remark}
		Let us emphasize that strain in materials science is only defined for continuous deformation.
		In this sense, it is not defined for cracks.
		In accordance with the definition \eqref{eq:strain} in the continuous setting, we refer to its discrete counterpart $\nabla u$ as the strain.
		Nevertheless, we will see later that strain in the classical sense is sometimes better resembled by the smooth part~$a$. 
	\end{remark}
	
	By the next proposition, the functions~\eqref{TGV-model}, resp. \eqref{flow:constr:tgv}  possess minimizers under a mild assumption which was in particular fulfilled for all our practical examples.
	%
	\begin{proposition}	\label{prop:tgv}
		 Let $\ker ((A \ B)) \cap \ker (\boldsymbol{\tilde \nabla \nabla}) = \{ \mathbf{0} \}$.
		Then there exist minimizers of \eqref{TGV-model} and \eqref{flow:constr:tgv}. 		
	\end{proposition}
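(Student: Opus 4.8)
The plan is to apply the direct method of the calculus of variations to the \emph{joint} functional in the pair $(u,a)$, the crucial point being that the kernel hypothesis forces minimizing sequences to be bounded. First I would rewrite \eqref{TGV-model} as the minimization over $(u,a)$ of
\[
F(u,a) \coloneqq \|(A\ B)u + c\|_1 + \lambda_1\|\boldsymbol{\nabla} u - a\|_{2,1} + \lambda_2\|\boldsymbol{\tilde\nabla} a\|_{2,1},
\]
and likewise \eqref{flow:constr:tgv} as the minimization of $\tilde F \coloneqq F + \iota_C$, where $C \coloneqq \{(u,a) : \nabla_x u_1 - a_1 \ge 0\}$ is a closed convex cone. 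A minimizer $(u^\ast,a^\ast)$ of $F$ is automatically such that $u^\ast$ minimizes $E_{\TGV}$: since $E_{\TGV}(u^\ast) = \inf_a F(u^\ast,a) \le F(u^\ast,a^\ast) = \inf_{(u,a)} F(u,a) = \inf_u E_{\TGV}(u)$, and the same reasoning applies to $\tilde F$ and $\tilde E_{\TGV}$. Hence it suffices to show that $F$ and $\tilde F$ attain their infima. Both are proper (finite at $(0,0)$, with $F(0,0) = \|c\|_1$ and $(0,0) \in C$), convex, and lower semicontinuous, the latter because the three summands of $F$ are continuous and $C$ is closed.

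Next I would take a minimizing sequence $(u^n,a^n)$ for $F$ (resp.\ $\tilde F$). Since the infimum is finite and each of the three summands of $F$ is nonnegative, the quantities $\|(A\ B)u^n + c\|_1$, $\|\boldsymbol{\nabla} u^n - a^n\|_{2,1}$ and $\|\boldsymbol{\tilde\nabla} a^n\|_{2,1}$ are bounded in $n$. The \textbf{key observation} is that
\[
\boldsymbol{\tilde\nabla}\boldsymbol{\nabla} u^n = \boldsymbol{\tilde\nabla}\bigl(\boldsymbol{\nabla} u^n - a^n\bigr) + \boldsymbol{\tilde\nabla} a^n,
\]
so that $\|\boldsymbol{\tilde\nabla}\boldsymbol{\nabla} u^n\|$ is bounded as well, $\boldsymbol{\tilde\nabla}$ being a bounded linear map. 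Therefore $u \mapsto \|(A\ B)u\|_1 + \|\boldsymbol{\tilde\nabla}\boldsymbol{\nabla} u\|$ stays bounded along $(u^n)$; by the assumption $\ker((A\ B)) \cap \ker(\boldsymbol{\tilde\nabla}\boldsymbol{\nabla}) = \{\mathbf 0\}$ this map is a norm on the finite-dimensional space of discrete flow fields, and all norms there are equivalent, so $(u^n)$ is bounded. Then $a^n = \boldsymbol{\nabla} u^n - (\boldsymbol{\nabla} u^n - a^n)$ is a difference of two bounded sequences, hence bounded.

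Finally, passing to a convergent subsequence $(u^{n_k},a^{n_k}) \to (u^\ast,a^\ast)$ and using lower semicontinuity gives $F(u^\ast,a^\ast) \le \liminf_k F(u^{n_k},a^{n_k}) = \inf F$, so $(u^\ast,a^\ast)$ minimizes $F$; by the transfer argument above, $u^\ast$ minimizes \eqref{TGV-model}. For \eqref{flow:constr:tgv} the proof is identical: the boundedness estimate is untouched by the extra term $\iota_C$, the limit $(u^\ast,a^\ast)$ lies in $C$ because $C$ is closed, and $\tilde F$ is still proper and lsc. I expect the only genuine work to be in the "key observation" step, namely checking that the block structure and dimensions of $\boldsymbol{\nabla}$ and $\boldsymbol{\tilde\nabla}$ in \eqref{not:1sttil} are compatible so that $\boldsymbol{\tilde\nabla}\boldsymbol{\nabla} u$ indeed makes sense and equals the stated sum, and that the hypothesis of the proposition is phrased with respect to exactly this composition; the remaining steps are the standard finite-dimensional direct method. (Equivalently, one could phrase the argument via recession functions: $F_\infty(u,a) = \|(A\ B)u\|_1 + \lambda_1\|\boldsymbol{\nabla}u-a\|_{2,1} + \lambda_2\|\boldsymbol{\tilde\nabla}a\|_{2,1}$ vanishes only at $(0,0)$ under the hypothesis, so the recession cone of $F$ is trivial and $F$ has nonempty bounded sublevel sets.)
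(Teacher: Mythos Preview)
Your argument is correct and takes a genuinely different route from the paper. The paper works with $E_{\TGV}$ as a function of $u$ alone and proves coercivity by contradiction: it splits a putative unbounded sequence $u^{(r)}$ orthogonally along $\ker(\boldsymbol{\tilde\nabla}\boldsymbol{\nabla})$ and its complement, and then performs a case analysis (does the component in the complement blow up or not?), in each case deriving a contradiction by tracking how the TGV term or the data term must diverge. You instead lift to the joint functional $F(u,a)$ and use the simple identity $\boldsymbol{\tilde\nabla}\boldsymbol{\nabla} u^n = \boldsymbol{\tilde\nabla}(\boldsymbol{\nabla} u^n - a^n) + \boldsymbol{\tilde\nabla} a^n$ to bound $\|\boldsymbol{\tilde\nabla}\boldsymbol{\nabla} u^n\|$ directly from the boundedness of the two regularizer terms; the kernel hypothesis then turns $\|(A\ B)\,\cdot\,\|_1 + \|\boldsymbol{\tilde\nabla}\boldsymbol{\nabla}\,\cdot\,\|$ into an honest norm and finite-dimensional norm equivalence finishes the job. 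Your route avoids the orthogonal decomposition and the case split entirely, and the recession-function rephrasing you mention makes the role of the hypothesis transparent. The paper's approach, on the other hand, stays with the reduced functional in $u$, which matches how $E_{\TGV}$ is stated, at the cost of a longer argument. One small point worth making explicit in your write-up: the bound on $\|(A\ B)u^n\|_1$ follows from the bound on $\|(A\ B)u^n + c\|_1$ via the triangle inequality, which you use implicitly.
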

	%
	%
	\begin{proof} 
		It is sufficient to show that the continuous, convex function $E$ in \eqref{TGV-model} is coercive, 
		which implies the assertion for \eqref{TGV-model}.
		Regarding \eqref{flow:constr:tgv}, the assertion follows immediately since the function remains coercive after adding the constraint.
		
		Assume in contrast that $E$ is not coercive.
		Then, for some $C \in \R$, there exists a sequence $(u^{(r)})_{r \in \N}$ 
		with $\| u^{(r)} \|_2  \rightarrow \infty$ and $E(u^{(r)}) \le C$.
		We split 
		$u^{(r)} = u^{(r)}_a + u^{(r)}_b$ 
		with 
		$u^{(r)}_a \in \ker(\boldsymbol{\tilde \nabla \nabla})$ and 
		$u^{(r)}_b \in \ker(\boldsymbol{\tilde \nabla \nabla})^\perp$, 
		where $\ker(\boldsymbol{\tilde \nabla \nabla})^\perp$ denotes the orthogonal complement of $\ker(\boldsymbol{\tilde \nabla \nabla})$.
				
		Suppose that $\|u^{(r)}_b\|_2 \rightarrow \infty$ as $r \rightarrow  \infty$. 
		Then we have
		\begin{align}
			\TGV(u^{(r)})  
			&= \min_{a} \left\{ \lambda_1( \| \boldsymbol{\nabla} u^{(r)} - a \|_{2,1} + \lambda_2 \| \boldsymbol{\tilde \nabla} a \|_{2,1} )\right\} \\
			&= \min_{z} \left\{ \lambda_1( \| \boldsymbol{\nabla} u^{(r)}_b - z \|_{2,1} 
			+ \lambda_2 \| \boldsymbol{\tilde \nabla} z + \boldsymbol{\tilde \nabla} \boldsymbol{\nabla} u^{(r)}_a \|_{2,1} )\right\} \\
			&= \min_{z} \left\{ \lambda_1( \| \boldsymbol{\nabla} u^{(r)}_b - z \|_{2,1} + \lambda_2 \| \boldsymbol{\tilde \nabla} z \|_{2,1} )\right\},
		\end{align}
		where $z \coloneqq a - \boldsymbol{\nabla} u^{(r)}_a$.
		Let $\boldsymbol{\nabla} u^{(r)}_b = w^{(r)} + \tilde w^{(r)}$,
		where 
		$w^{(r)} \in \ker(\boldsymbol{\tilde \nabla})^\perp$ 
		and 
		$\tilde w^{(r)} \in \ker(\boldsymbol{\tilde \nabla})$.
		Then
		$$
		\boldsymbol{\tilde \nabla} \boldsymbol{\nabla} u^{(r)}_b = \boldsymbol{\tilde \nabla} w^{(r)}
		$$
		and since 
		$\boldsymbol{\tilde \nabla} \boldsymbol{\nabla}$ 
		is injective on 
		$\ker( \boldsymbol{\tilde \nabla} \boldsymbol{\nabla})^\perp$
		we have that 
		$\|\boldsymbol{\tilde \nabla} \boldsymbol{\nabla} u^{(r)}_b\|_2 =  \| \boldsymbol{\tilde \nabla} w^{(r)} \|_2 \rightarrow \infty$
		as $r \rightarrow \infty$.
		Now $\boldsymbol{\tilde \nabla}$ is injective on $\ker(\boldsymbol{\tilde \nabla})^\perp$ such that
		$\|w^{(r)} \|_2 \rightarrow \infty$ as $r \rightarrow \infty$.
		Let 
		$z^{(r)} = z^{(r)}_1 + z^{(r)}_2$ be a minimizer of $\TGV(u^{(r)})$,
		where
		$z_1^{(r)} \in \ker(\boldsymbol{\tilde \nabla})^\perp$ and $z_2^{(r)} \in \ker(\boldsymbol{\tilde \nabla})$.
		Then, by the equivalence of norms,
		$$
		\|w^{(r)} - z_1 + \tilde w^{(r)} - z_2\|_{2,1} 
		$$
		can only be bounded as $r \rightarrow \infty$ if 
		$$
		\|w^{(r)} - z_1^{(r)} + \tilde w^{(r)} - z_2^{(r)}\|_{2}^2 =  \|w^{(r)} - z_1^{(r)}\|_2^2 + \| \tilde w^{(r)} - z_2^{(r)}\|_2^2
		$$
		is bounded, which is only possible if $\|z_1^{(r)} \|_2 \rightarrow \infty$.
		But then we get
		$
		\|\boldsymbol{\tilde \nabla} z_1 ^{(r)} \|_{2,1} \rightarrow \infty	
		$
		as $r \rightarrow \infty$.
		Thus, we obtain the contradiction
		\begin{align}
		\TGV(u^{(r)})  \rightarrow \infty \qquad \mathrm{as} \ r \rightarrow \infty.
		\end{align}	
    			
		Assume that $\|u^{(r)}_b\|_2 \not \rightarrow \infty$ as $r \rightarrow  \infty$. 
		Then there exists a subsequence $(u^{(r_{j})}_b)_{j \in \N}$ with $\|u^{(r_{j})}_b\|_2 \le \tilde C$ for some $\tilde C \in \R$
		and it follows
		\begin{align}
		\| (A \ B) u^{(r_{j})} + c \|_1 &\ge \| (A \ B) u^{(r_{j})}_a + c \|_1 - \| (A \ B) u^{(r_{j})}_b  \|_1 \\ 
		&\ge \| (A \ B) u^{(r_{j})}_a + c \|_1 -  \tilde C. 
		\end{align}
		Since by assumption
		$(A \ B)$ is injective on $\ker(\boldsymbol{\tilde \nabla \nabla})$,
		the right-hand side goes to $+\infty$ as $j \rightarrow \infty$,
		which is a contradiction.		
	\end{proof}
	
	%
	\section{Algorithm} \label{sec:strain:alg}
	In this section, we use the primal-dual hybrid gradient method with modified dual variable (PDHGMp) \cite{CP11,PCCB09} 
	to compute a minimizer of \eqref{TGV-model}. The minimization of \eqref{flow:constr:tgv} follows similarly.
	
	We rewrite the minimization problem as
	\begin{align} \label{sec2}
	\min_{u,a,s,t}  & \left\{ \|(A \ B) u + c \|_1 + \lambda_1 \| s \|_{2,1} + \lambda_2 \| t \|_{2,1}  \right\}\\
	&\text{such that} \quad
	\left( \begin{array}{rr} \boldsymbol{\nabla} & -I_{4N} \\ 0 & \boldsymbol{\widetilde \nabla} \end{array} \right)
	\begin{pmatrix} u \\ a \end{pmatrix} 
	= \begin{pmatrix} s \\ t \end{pmatrix}.
	\end{align}	
	The basic PDHGMp algorithm for this problem is given in Algorithm \ref{strain:alg1tgv}.
	For the special form, we refer to~\cite[Alg.~8]{BSS2016}.
	
	{\small
		\begin{algorithm} 
			\begin{algorithmic}
				\State \textbf{Initialization:}
				{$u^{(0)}=u_0$, $a^{(0)}=a_0$, $b_1^{(0)}=0$, $b_2^{(0)}=\mathbf{0}$, $\bar b_1^{(0)}=\mathbf{0}$, $\bar b_2^{(0)}=\mathbf{0}$, $\theta =1$, $\tau_1 = \frac14$, $\tau_2=\frac14$.} \\
				\For{$r = 0,1,\ldots$}
				\State
				\begin{align}
				u^{(r+1)} = \ &\argmin_{u \in \R^{2N}} \Big\{\| (A \ B) u + c \|_1 + \frac{1}{2\tau_1} \|u-(u^{(r)}-
				\tau_1 \tau_2 \boldsymbol{\nabla}^\tT \bar b_1^{(r)})\|_2^2 \Big\} \\
				a^{(r+1)} = \ &a^{(r)} - 
				\tau_1 \tau_2 (\boldsymbol{\widetilde \nabla}^\tT  \bar b_2^{(r)} - \bar b_1^{(r)})
				\\
				s^{(r+1)} = \ &\argmin_{s \in \R^{4N}}\{\lambda_1 \|s\|_{2,1} + \frac{\tau_2}{2} \|s - (b_1^{(r)} + \boldsymbol{\nabla}   u^{(r+1)} - a^{(r+1)})\|_2^2\} \\
				t^{(r+1)} = \ &\argmin_{t \in \R^{6N}}\{\lambda_2 \|t\|_{2,1} + \frac{\tau_2}{2} \|t - (b_2^{(r)} + \boldsymbol{\widetilde \nabla} a^{(r+1)})\|_2^2\}
				\\
				b_1^{(r+1)} = \ & b_1^{(r)} + \boldsymbol{\nabla} u^{(r+1)} - a^{(r+1)} - s^{(r+1)} \\
				b_2^{(r+1)} = \ & b_2^{(r)} + \boldsymbol{\widetilde \nabla} a^{(r+1)}               - t^{(r+1)}			
				\\ 
				\bar b_1^{(r+1)} = \ & b_1^{(r+1)} + \theta (b_1^{(r+1)}-b_1^{(r)} ) \\
				\bar b_2^{(r+1)} = \ & b_2^{(r+1)} + \theta (b_2^{(r+1)}-b_2^{(r)} )
				\end{align}
				\EndFor
				\State \textbf{Output:} Strain components $s$ and $a$, optical flow $u$
			\end{algorithmic}
			\caption{PDHGMp for optical flow and strain computation by $E_{\TGV}$.}
			\label{strain:alg1tgv}
		\end{algorithm}
	}
		
	We have to comment on the proximal steps within the algorithm.
	The update for $a$ is straightforward.
	Regarding $s,t$,
	observe that the problems can be separated into $N$ subproblems, e.g.~for $s$ they have the form
	\begin{align}
	\hat s = \argmin_{s \in \R^4} \left\{ \| s \|_{2} + \frac{\tau_2}{2 \lambda_1} \|s-x\|_2^2\right\},
	\end{align}
	where $x = (b_1^{(r)} + \boldsymbol{\nabla}   u^{(r+1)} - a^{(r+1)})_{i+jN, j=0,\ldots,3} \in \R^4$, $i \in 1,\ldots,N$.
	The solution of these problems is given by a grouped or coupled shrinkage, see the appendix.	
	If we extend the model by the positivity constraint, i.e.~\eqref{flow:constr:tgv}, the update step for $s$ becomes a bit more involved since we need to compute $N$ problems of the form
	\begin{align}
	\hat s = \argmin_{s \in \R^4} \left\{ \| s \|_{2} + \iota_{\ge0}(s_1) + \frac{\tau_2}{2 \lambda_1} \|s-x\|_2^2\right\}
	\end{align}
	with $x$ as above.
	For $x_1 \ge 0$, the term $\iota_{\ge0}(s_1)$ can be neglected and we end up with the usual coupled shrinkage.
	For $x_1 < 0$, we have $\hat s_1 = 0$ and for the remaining three components the usual coupled shrinkage can be applied.	
	
	Due to the diagonal structure of $A$ and $B$, the proximal step to get $u$ can be separated into $N$ subproblems of the form
	\begin{align}\label{genshrink:eq}
	\hat y = \argmin_{y\in\R^2} \left\{|\alpha y_1 + \beta y_2 + \gamma| + \frac12 \|y-x\|_2^2\right\},
	\end{align}
	where $\alpha, \beta, \gamma \in \R$.
	The solution is a generalized soft shrinkage of $x$ explained in the appendix.
	
	\begin{remark}[Computation of the strain $\varepsilon$ within the primal-dual algorithm] \label{strain_comp}
		The primal-dual method uses the Lagrangian of \eqref{sec2}, which contains the summand
		$\langle (\boldsymbol{\nabla}u-a)-s,b\rangle$, with the dual variable $b$.
		Hence, the algorithm computes 
		the desired strain tensor
		directly within the iteration process
		and no subsequent computation of the derivative 
		of the optical flow is required. 		
	\end{remark}

	We updated the basic PDHGMp algorithm by a coarse-to-fine scheme as proposed, e.g., in \cite{Ana89,BBPW04,SRB10,SRB13}
	to cope with large displacements. Moreover, we applied the common trick median filtering on every scale to improve the results, see also \cite{SRB10,SRB13}.
	For more information and the pseudo code for the coarse-to-fine scheme we refer to \cite{Fitschen_2017}.
	To ensure convergence of the algorithm, we use $3000$ iterations of the PDHGMp algorithm in each level of the coarse-to-fine scheme.
	The high number of iterations leads to a computation time of about $20$ minutes which is not competitive to state-of-the-art optical flow methods.
	We want to emphasize that there is still some potential to optimize the computation time but for our applications this was not necessary since the computation time is negligible in comparison to the time demand of the experiments.
	

	\section{Numerical Examples} \label{sec:strain:num}

	In this section, we present and analyze numerical results of our approaches.	
	
	First, we consider artificial data to demonstrate the behavior of different optical flow models
	and to underline why the TGV regularized model appears to be appropriate for the described practical tasks.
	Then, we deal with displacements in aluminum matrix composites (AMC) during tensile tests, 
	where we focus on the detection of local damage and crack propagation.
	Finally, we demonstrate the flexibility of our method by showing results 
	for different experimental settings and materials.
	
	The algorithm is implemented in C++.
	Unless stated otherwise, we set 
	\begin{align}\label{eq:strain:param}
	\lambda_1 \coloneqq 0.2 \ \mathrm{and} \ \lambda_2 \coloneqq 10
	\end{align} 
	for all real-world examples.
	The positivity constraint \eqref{flow:constr:tgv} on the strain 
	is used for the real-world examples of tensile tests with AMCs.
	For the artificial examples 
	and the compression and fatigue tests,  
	we do not involve the constraints.	
	
	The data in Figure~\ref{fig:compress} was provided by K.~Lichtenberg from the ``Institute for Applied Materials (IAM)'' at the Karlsruhe Institute of Technology (KIT).
	
	\subsection{Artificial Examples} \label{sub:num_art} 
	
	To explain the differences  between various regularization terms,
	we use the following simple example:
	a segment of $100 \times 100$ pixels of one exemplary micrograph of an aluminum composite is taken 
	and the simulated displacement field in \textbf{Figure~\ref{fig:strain:toy}} consisting of the sum of a piecewise constant 
	and a linear part is applied to warp this image. 
	Then, the warped and the initial image are used as the input to reconstruct the simulated   
	displacement field and its strain tensor via variational models 
	with the same data term \eqref{e_flow} and 	the following priors:
	besides the TGV prior, 
	we used the
	\begin{itemize}
		\item[i)]  $\HH^1$ regularizer, e.g., used in the Horn-Schunck model \cite{HS81},	
		\begin{align}
		\lambda \HH^1(u) \coloneqq \lambda \| \boldsymbol{\nabla} u  \|_2^2, \tag{$\HH^1$} \label{H1}
		\end{align}
		\item[ii)]   $\mathrm{TV}$ regularizer proposed in \cite{ROF92} and used for optical flow, e.g.~in \cite{BBPW04},
		\begin{align}
		\lambda \TV(u) \coloneqq \lambda \| \boldsymbol{\nabla} u  \|_{2,1}, \tag{\text{TV}} \label{TV}
		\end{align}
		\item[iii)]   $\mathrm{TV}_2$ regularizer with second order differences
		\begin{align}
		\lambda \TV_2(u) \coloneqq \lambda \| \boldsymbol{\nabla}^2 u  \|_{2,1}, \tag{$\text{TV}_2$} \label{TV_2}
		\end{align}
		\item[iv)]    $\TV\operatorname{-}\TV_2$ regularizer, e.g.~in~\cite{PS13},
		\begin{align}
		\TV_{1\operatorname{-}2}(u) \coloneqq \lambda_1 \| \boldsymbol{\nabla} u  \|_{2,1} + \lambda_2 \|  \boldsymbol{\nabla}^2 u  \|_{2,1} ,\quad
		\tag{$\TV\operatorname{-}\TV_2$} \label{TV-TV2}
		\end{align}	
		\item[v)]    $\IC$ regularizer
		\begin{align}
		\IC(u) \coloneqq \min_{u=v+w} \lambda_1 \| \boldsymbol{\nabla} v  \|_{2,1} + \lambda_2 \|  \boldsymbol{\nabla}^2 w  \|_{2,1} + \lambda_3 \|w\|_2^2 ,\quad
		\tag{$\IC$} \label{IC}
		\end{align}			
		where we added a small quadratic term to 
		avoid the ambiguity introduced by the difference matrices.
		We set $\lambda_3 \coloneqq 5 \cdot 10^{-5}$.
		Note that such a term in relation with the difference operators was not necessary	for the $\TGV$ regularizer since we do not decompose $u$, but the strain.
	\end{itemize}
	
	\begin{figure}
		\centering
		\begin{tabular}{cccc}
			{\small original $u_1$\hspace{0.4cm}} &
			{\small constant parts} &
			{\small linear part}&
			{\small derivative $\nabla_x u_1$}
			\\ 			
			\begin{tikzpicture}\tikzstyle{every node}=[font=\tiny]
			\begin{axis}[width=.31\textwidth,
			enlargelimits=false, 
			hide axis,
			axis equal image,
			colorbar,
			colormap name=parula,
			colorbar style={overlay,width=0.15cm, yticklabel style={
					/pgf/number format/.cd,
					fixed,
					fixed zerofill,
					precision=1,
					/tikz/.cd}}
			]
			\addplot[point meta min=-2, point meta max=2] 
			graphics[xmin=0, xmax=1, ymin=0, ymax=1]{./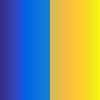};
			\end{axis}
			\end{tikzpicture} 	\hspace{0.2cm}
			&
			\hspace{0.2cm}
			\begin{tikzpicture}\tikzstyle{every node}=[font=\tiny]
			\begin{axis}[width=.31\textwidth,
			enlargelimits=false, 
			hide axis,
			axis equal image,
			colorbar,
			colormap name=parula,
			colorbar style={overlay,width=0.15cm, yticklabel style={
					/pgf/number format/.cd,
					fixed,
					fixed zerofill,
					precision=1,
					/tikz/.cd}}
			]
			\addplot[point meta min=-1, point meta max=1] 
			graphics[xmin=0, xmax=1, ymin=0, ymax=1]{./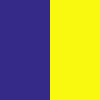};
			\end{axis}
			\end{tikzpicture} \hspace{0.2cm}
			&
			\hspace{0.2cm}
			\begin{tikzpicture}\tikzstyle{every node}=[font=\tiny]
			\begin{axis}[width=.31\textwidth,
			enlargelimits=false, 
			hide axis,
			axis equal image,
			colorbar,
			colormap name=parula,
			colorbar style={overlay,width=0.15cm, yticklabel style={
					/pgf/number format/.cd,
					fixed,
					fixed zerofill,
					precision=1,
					/tikz/.cd}}
			]
			\addplot[point meta min=-1, point meta max=1] 
			graphics[xmin=0, xmax=1, ymin=0, ymax=1]{./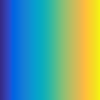};
			\end{axis}
			\end{tikzpicture} \hspace{0.2cm}
			&
			\hspace{0.2cm}
			\begin{tikzpicture}\tikzstyle{every node}=[font=\tiny]
			\begin{axis}[width=.31\textwidth,
			enlargelimits=false, 
			hide axis,
			axis equal image,
			colorbar,
			colormap name=parula,
			colorbar style={overlay,width=0.15cm, yticklabel style={
					/pgf/number format/.cd,
					fixed,
					fixed zerofill,
					precision=1,
					/tikz/.cd}}
			]
			\addplot[point meta min=0, point meta max=2] 
			graphics[xmin=0, xmax=1, ymin=0, ymax=1]{./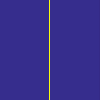};
			\end{axis}
			\end{tikzpicture} \hspace{0.4cm}
		\end{tabular}
		\caption{\label{fig:strain:toy}
			Original displacement $u_1$ in pixels, additively composed of a piecewise constant and a linear part and 
			$\nabla_x u_1$.} 
	\end{figure}
	
	The minimizers $u_1$ of the various models and the strain component $\varepsilon_{11} = \nabla_x u_1$  
	are shown in \textbf{Figures~\ref{fig:strain:toy_deriv:a}} and \textbf{\ref{fig:strain:toy_deriv:b}}, respectively.
	The parameters stated in the caption are optimized with respect to the best visual impression.
	The models with $\HH^1$ and $\TV_2$ priors cannot find a sharp displacement field boundary, 
	while the $\TV$ model introduces additional boundaries due to the so-called staircasing effect.
	The $\TV\operatorname{-}\TV_2$ regularized model is better, but it is clearly worse than the IC and TGV methods.
	Clearly, for this example both methods IC and TGV give good results since $u_1$ is indeed
	additively composed of a non-smooth and a smooth component. The next example does not have this property.
	
	%
	\begin{figure} \centering
		\begin{subfigure}[b]{0.99\textwidth}\centering
			\begin{tabular}{ccc}
				\eqref{H1}& 
				\eqref{TV}&
				\eqref{TV_2} 
				\\
				\begin{tikzpicture}\tikzstyle{every node}=[font=\tiny]
				\begin{axis}[width=.4\textwidth,
				enlargelimits=false, 
				hide axis,
				axis equal image
				]
				\addplot[point meta min=-2, point meta max=2] 
				graphics[xmin=0, xmax=1, ymin=0, ymax=1]{./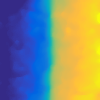};
				\end{axis}
				\end{tikzpicture} 
				&
				\begin{tikzpicture}\tikzstyle{every node}=[font=\tiny]
				\begin{axis}[width=.4\textwidth,
				enlargelimits=false, 
				hide axis,
				axis equal image
				]
				\addplot[point meta min=-2, point meta max=2] 
				graphics[xmin=0, xmax=1, ymin=0, ymax=1]{./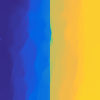};
				\end{axis}
				\end{tikzpicture} 	
				&		
				\begin{tikzpicture}\tikzstyle{every node}=[font=\tiny]
				\begin{axis}[width=.4\textwidth,
				enlargelimits=false, 
				hide axis,
				axis equal image,
				colorbar,
				colormap name=parula,
				colorbar  style={overlay,width=0.15cm, yticklabel style={
						/pgf/number format/.cd,
						fixed,
						fixed zerofill,
						precision=1,
						/tikz/.cd}}
				]
				\addplot[point meta min=-2, point meta max=2] 
				graphics[xmin=0, xmax=1, ymin=0, ymax=1]{./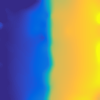};
				\end{axis}
				\end{tikzpicture} 	\hspace{0.3cm}
				\\[1ex]
				\eqref{TV-TV2} &
				\eqref{IC} 	   &
				\eqref{tgv} 	 
				\\	
				\begin{tikzpicture}\tikzstyle{every node}=[font=\tiny]
				\begin{axis}[width=.4\textwidth,
				enlargelimits=false, 
				hide axis,
				axis equal image
				]
				\addplot[point meta min=-2, point meta max=2] 
				graphics[xmin=0, xmax=1, ymin=0, ymax=1]{./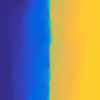};
				\end{axis}
				\end{tikzpicture} 
				&
				\begin{tikzpicture}\tikzstyle{every node}=[font=\tiny]
				\begin{axis}[width=.4\textwidth,
				enlargelimits=false, 
				hide axis,
				axis equal image
				]
				\addplot[point meta min=-2, point meta max=2] 
				graphics[xmin=0, xmax=1, ymin=0, ymax=1]{./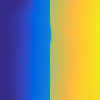};
				\end{axis}
				\end{tikzpicture} 	
				&
				\begin{tikzpicture}\tikzstyle{every node}=[font=\tiny]
				\begin{axis}[width=.4\textwidth,
				enlargelimits=false, 
				hide axis,
				axis equal image,
				colorbar,
				colormap name=parula,
				colorbar  style={overlay,width=0.15cm, yticklabel style={
						/pgf/number format/.cd,
						fixed,
						fixed zerofill,
						precision=1,
						/tikz/.cd}}
				]
				\addplot[point meta min=-2, point meta max=2] 
				graphics[xmin=0, xmax=1, ymin=0, ymax=1]{./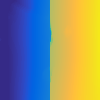};
				\end{axis}
				\end{tikzpicture} \hspace{0.3cm}
			\end{tabular}
			\caption{ \label{fig:strain:toy_deriv:a} Displacement $u_1$ in pixels using various regularization terms.}
		\end{subfigure}\\[3ex]
		\begin{subfigure}[b]{0.99\textwidth}\centering
			\begin{tabular}{ccc}
				\eqref{H1}	&
				\eqref{TV}	&
				\eqref{TV_2} 
				\\
				\begin{tikzpicture}\tikzstyle{every node}=[font=\tiny]
				\begin{axis}[width=.4\textwidth,
				enlargelimits=false, 
				hide axis,
				axis equal image
				]
				\addplot[point meta min=0, point meta max=2] 
				graphics[xmin=0, xmax=1, ymin=0, ymax=1]{./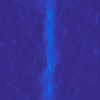};
				\end{axis}
				\end{tikzpicture} 
				&
				\begin{tikzpicture}\tikzstyle{every node}=[font=\tiny]
				\begin{axis}[width=.4\textwidth,
				enlargelimits=false, 
				hide axis,
				axis equal image
				]
				\addplot[point meta min=0, point meta max=2] 
				graphics[xmin=0, xmax=1, ymin=0, ymax=1]{./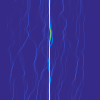};
				\end{axis}
				\end{tikzpicture}
				&	
				\begin{tikzpicture}\tikzstyle{every node}=[font=\tiny]
				\begin{axis}[width=.4\textwidth,
				enlargelimits=false, 
				hide axis,
				axis equal image,
				colorbar,
				colormap name=parula,
				colorbar  style={overlay,width=0.15cm, yticklabel style={
						/pgf/number format/.cd,
						fixed,
						fixed zerofill,
						precision=1,
						/tikz/.cd}}
				]
				\addplot[point meta min=0, point meta max=2] 
				graphics[xmin=0, xmax=1, ymin=0, ymax=1]{./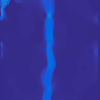};
				\end{axis}
				\end{tikzpicture} 		\hspace{0.3cm}	 
				\\[1ex]
				\eqref{TV-TV2} &
				\eqref{IC} 	  &
				\eqref{tgv} 	  
				\\
				\begin{tikzpicture}\tikzstyle{every node}=[font=\tiny]
				\begin{axis}[width=.4\textwidth,
				enlargelimits=false, 
				hide axis,
				axis equal image
				]
				\addplot[point meta min=0, point meta max=2] 
				graphics[xmin=0, xmax=1, ymin=0, ymax=1]{./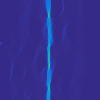};
				\end{axis}
				\end{tikzpicture} 
				&
				\begin{tikzpicture}\tikzstyle{every node}=[font=\tiny]
				\begin{axis}[width=.4\textwidth,
				enlargelimits=false, 
				hide axis,
				axis equal image
				]
				\addplot[point meta min=0, point meta max=2] 
				graphics[xmin=0, xmax=1, ymin=0, ymax=1]{./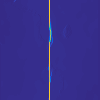};
				\end{axis}
				\end{tikzpicture}	
				&
				\begin{tikzpicture}\tikzstyle{every node}=[font=\tiny]
				\begin{axis}[width=.4\textwidth,
				enlargelimits=false, 
				hide axis,
				axis equal image,
				colorbar,
				colormap name=parula,
				colorbar  style={overlay,width=0.15cm, yticklabel style={
						/pgf/number format/.cd,
						fixed,
						fixed zerofill,
						precision=1,
						/tikz/.cd}}
				]
				\addplot[point meta min=0, point meta max=2] 
				graphics[xmin=0, xmax=1, ymin=0, ymax=1]{./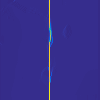};
				\end{axis}
				\end{tikzpicture}	\hspace{0.3cm}	
			\end{tabular}
			\caption{ \label{fig:strain:toy_deriv:b} Derivative $\nabla_x u_1$ using various regularization terms.}
		\end{subfigure}
		\caption{ \label{fig:strain:toy_deriv}
			Results for the simulated example from \textbf{Figure~\ref{fig:strain:toy}} for various regularization terms.
			Parameters: $\lambda = 50$ for \eqref{H1}, $\lambda = 0.1$ for \eqref{TV}, $\lambda = 0.1$ for \eqref{TV_2}, $\lambda_1 = 0.1$, $\lambda_2=0.02$ for \eqref{TV-TV2}, $\lambda_1 = 0.1$, $\lambda_2=1$, $\lambda_3=0.5\cdot 10^{-5}$ for  \eqref{IC} and $\lambda_1 = 0.1$, $\lambda_2=2$ for \eqref{tgv}.} 
	\end{figure}	
	
	Our next artificial example in
	\textbf{Figure~\ref{fig:art_tgv_ic}} shows a displacement field consisting of two parts. 
	The lower part contains a jump whereas we have a purely linear transition in the upper part.
	In terms of a tensile test, this might be seen as a crack in the lower part 
	and a purely elastic deformation in the upper part of the image.
	As the displacement field is a transition between the non-smooth upper part and the smooth lower one,
	it can not be split additively in an appropriate way, whereas it is actually possible to split the strain.
	Therefore, the TGV model clearly outperforms the IC model.
	The parameters for the TGV model are those in \eqref{eq:strain:param}.
	Compared to the TGV model, we chose smaller parameters for the IC model, namely $\lambda_1 = 0.1$, $\lambda_2 = 1$, $\lambda_3=0.5\cdot 10^{-5}$,
	since otherwise the resulting displacement was too smooth.
	We also added the result of the large displacement optical flow (LDOF) model by Brox and Malik~\cite{BM11} 
	using their implementation with standard parameters.
	However, it is clearly visible that the result contains too much structures in the smooth upper part and at the same time is not able to reconstruct the jump in the lower part.
	
	\begin{figure} \centering
		\begin{subfigure}[b]{0.99\textwidth}\centering
			\begin{tabular}{cccc}
				true &
				\eqref{IC} &
				\eqref{tgv}  &
				LDOF \cite{BM11}
				\\	
				\begin{tikzpicture}\tikzstyle{every node}=[font=\tiny]
				\begin{axis}[width=.35\textwidth,
				enlargelimits=false, 
				hide axis,
				axis equal image
				]
				\addplot[point meta min=-2, point meta max=2] 
				graphics[xmin=0, xmax=1, ymin=0, ymax=1]{./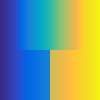};
				\end{axis}
				\end{tikzpicture} 
				&
				\begin{tikzpicture}\tikzstyle{every node}=[font=\tiny]
				\begin{axis}[width=.35\textwidth,
				enlargelimits=false, 
				hide axis,
				axis equal image
				]
				\addplot[point meta min=-2, point meta max=2] 
				graphics[xmin=0, xmax=1, ymin=0, ymax=1]{./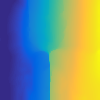};
				\end{axis}
				\end{tikzpicture} 
				&
				\begin{tikzpicture}\tikzstyle{every node}=[font=\tiny]
				\begin{axis}[width=.35\textwidth,
				enlargelimits=false, 
				hide axis,
				axis equal image
				]
				\addplot[point meta min=-2, point meta max=2] 
				graphics[xmin=0, xmax=1, ymin=0, ymax=1]{./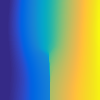};
				\end{axis}
				\end{tikzpicture} 
				&
				\begin{tikzpicture}\tikzstyle{every node}=[font=\tiny]
				\begin{axis}[width=.35\textwidth,		
				enlargelimits=false, 
				hide axis, 
				axis equal image, 
				axis on top, 
				colorbar, 
				scaled ticks=false, 
				colormap name=parula, 
				colorbar right, 
				colorbar  style={width=0.15cm, yticklabel style={overlay,
						/pgf/number format/.cd,
						fixed,
						fixed zerofill,
						precision=1,
						/tikz/.cd},
					overlay}
				]
				\addplot[point meta min=-2, point meta max=2] 
				graphics[xmin=0, xmax=1, ymin=0, ymax=1]{./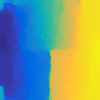};
				\end{axis}
				\end{tikzpicture} \hspace{0.3cm}	
			\end{tabular}
			\caption{Displacement $u_1$ in pixels using IC, TGV regularization, and the method in~\cite{BM11}.}
		\end{subfigure}\\[2ex]
		\begin{subfigure}[b]{0.99\textwidth}\centering
			\begin{tabular}{cccc}
				true&
				\eqref{IC} &
				\eqref{tgv} &
				LDOF \cite{BM11}
				\\	
				\begin{tikzpicture}\tikzstyle{every node}=[font=\tiny]
				\begin{axis}[width=.35\textwidth,
				enlargelimits=false, 
				hide axis,
				axis equal image
				]
				\addplot[point meta min=0, point meta max=2] 
				graphics[xmin=0, xmax=1, ymin=0, ymax=1]{./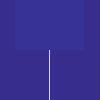};
				\end{axis}
				\end{tikzpicture} 
				&
				\begin{tikzpicture}\tikzstyle{every node}=[font=\tiny]
				\begin{axis}[width=.35\textwidth,
				enlargelimits=false, 
				hide axis,
				axis equal image
				]
				\addplot[point meta min=0, point meta max=2] 
				graphics[xmin=0, xmax=1, ymin=0, ymax=1]{./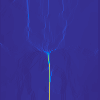};
				\end{axis}
				\end{tikzpicture} 
				&
				\begin{tikzpicture}\tikzstyle{every node}=[font=\tiny]
				\begin{axis}[width=.35\textwidth,
				enlargelimits=false, 
				hide axis,
				axis equal image
				]
				\addplot[point meta min=0, point meta max=2] 
				graphics[xmin=0, xmax=1, ymin=0, ymax=1]{./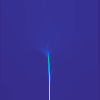};
				\end{axis}
				\end{tikzpicture} 
				&
				\begin{tikzpicture}\tikzstyle{every node}=[font=\tiny]
				\begin{axis}[width=.35\textwidth,
				enlargelimits=false, 
				hide axis,
				axis equal image,
				colorbar,
				colormap name=parula,
				colorbar  style={overlay,width=0.15cm, yticklabel style={
						/pgf/number format/.cd,
						fixed,
						fixed zerofill,
						precision=1,
						/tikz/.cd}}
				]
				\addplot[point meta min=0, point meta max=2] 
				graphics[xmin=0, xmax=1, ymin=0, ymax=1]{./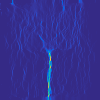};
				\end{axis}
				\end{tikzpicture}	 \hspace{0.3cm}	
			\end{tabular}
			\caption{Derivative $\nabla_x u_1$ using IC, TGV regularization, and the method in~\cite{BM11}.}
		\end{subfigure}
		\caption{ \label{fig:art_tgv_ic}
			Results for a simulated example 
			with a linear transition of the displacement field in the upper half and a jump in the lower half of the image.} 
	\end{figure}

	\subsection{Tensile Tests with AMC} \label{sub:num_tensile}

	Next, we deal with tensile tests of aluminum matrix composites (AMC).
	An AMC material is highly suitable for lightweight applications in aerospace, 
	defense and automotive industry.
	Compared to monolithic materials, composite materials have advantageous properties such as
	higher ultimate tensile strength or a higher stiffness to density ratio. Here, we mainly focus on calculating local strains 
	of silicon carbide particle reinforced AMCs 
	from a sequence of scanning electron microscope (SEM) images acquired during tensile tests, 
	where the specimen is pulled in $x$-direction and elongates with increasing force.
	Figure~\ref{fig:sem_image} illustrates the experimental setup and the resulting image sequence schematically. 
	We are interested in the local deformation behavior of the composite material on a microscale.
	Therefore, it is necessary to perform SEM monitored tensile tests to study
	deformation and crack initiation due to the inhomogeneous microstructure.
	We will especially focus on $\varepsilon_{11}$, which describes the change in displacement $u_1$ for the $x$-direction.
	A positive value indicates tension and a negative one compression.
	
	Unless stated otherwise, we use the image under no or low load as the reference image $f_1$ 
	and compute the displacement between the reference image and images under higher load.
	Hence, the displacement and strain are given in the coordinates of the reference image and we overlay the results with the reference image.
	
	The real-world example in \textbf{Figure~\ref{fig:real_tgv_ic}} illustrates again the difference between the TGV and IC regularized models. 
	In this example, the material has a few cracks in its initial state and they widen up, but the surrounding aluminum matrix deforms smoothly.
	Whereas the computed displacement $u_1$ looks roughly the same for both methods, the differences are visible in its derivative $\nabla_x u_1$.
	The TGV model only shows some structure where the cracks actually are, but the IC model leads to structures 
	that pass through the whole image since it is, in analogy to the previous artificial example, 
	impossible to split the displacement into two parts for this example.
	The TGV model outperforms the IC model in this setting.
	Nevertheless, for the cracks themselves, both methods lead to equally good results 
	as the lower half of the enlarged region in Figure~\ref{fig:real_tgv_ic} shows.
	In summary, both methods show the main structures, but the IC model introduces wrong structures.
	Therefore, we focus on the TGV model in the rest of our examples.
	
	\begin{figure} 
		\centering	
		\begin{subfigure}[b]{0.99\textwidth}\centering
			\begin{tabular}{cc}
				$\sigma_1 = 618$\,MPa & $\sigma_2 = 645$\,MPa\\
				\begin{tikzpicture}\tikzstyle{every node}=[font=\tiny] 
				\begin{axis}[ 
				width=.40\textwidth, 
				enlargelimits=false, 
				hide axis, 
				axis equal image, 
				axis on top, 
				] 
				\addplot
				graphics[xmin=0, xmax=600, ymin=0, ymax=800] 
				{./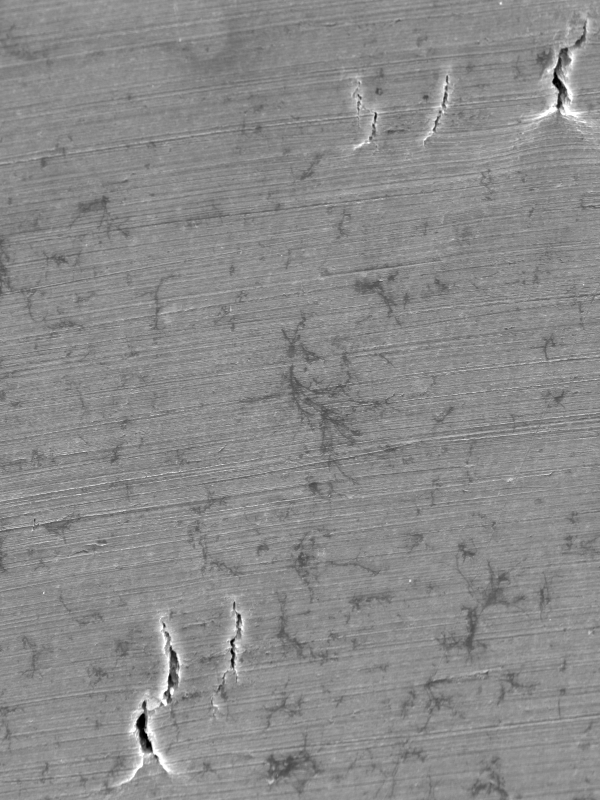};  
				\draw[color=black,thick] (111,111) rectangle (270,250);
				\draw[draw=none,fill=white] (364,5) rectangle (594,140);
				\draw[{|[width=4pt]}-{|[width=4pt]},thick] (379,50) -- (579,50) node[above=-2pt,pos=0.5] {50\,\textmu m};
				\end{axis} 
				\end{tikzpicture}
				\begin{tikzpicture}\tikzstyle{every node}=[font=\tiny] 
				\begin{axis}[ 
				width=.40\textwidth, 
				enlargelimits=false, 
				hide axis, 
				axis equal image, 
				axis on top, 
				] 
				\addplot
				graphics[xmin=0, xmax=160, ymin=0, ymax=140] 
				{./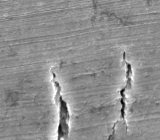};
				\draw[draw=none,fill=white] (111,1) rectangle (159,25);
				\draw[{|[width=4pt]}-{|[width=4pt]},thick] (115,9) -- (155,9) node[above=-2pt,pos=0.5] {10\,\textmu m}; 
				\end{axis} 
				\end{tikzpicture} 
				&
				\begin{tikzpicture}\tikzstyle{every node}=[font=\tiny] 
				\begin{axis}[ 
				width=.40\textwidth, 
				enlargelimits=false, 
				hide axis, 
				axis equal image, 
				axis on top, 
				] 
				\addplot
				graphics[xmin=0, xmax=600, ymin=0, ymax=800] 
				{./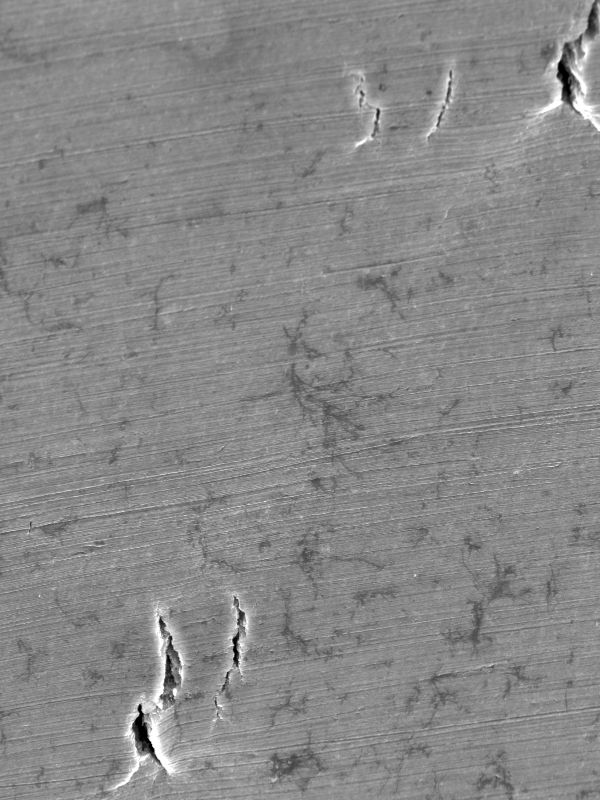};  
				\draw[color=black,thick] (111,111) rectangle (270,250);
				\draw[draw=none,fill=white] (364,5) rectangle (594,140);
				\draw[{|[width=4pt]}-{|[width=4pt]},thick] (379,50) -- (579,50) node[above=-2pt,pos=0.5] {50\,\textmu m};
				\end{axis} 
				\end{tikzpicture}
				\begin{tikzpicture}\tikzstyle{every node}=[font=\tiny] 
				\begin{axis}[ 
				width=.40\textwidth, 
				enlargelimits=false, 
				hide axis, 
				axis equal image, 
				axis on top, 
				] 
				\addplot
				graphics[xmin=0, xmax=160, ymin=0, ymax=140] 
				{./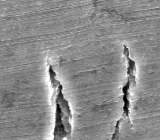}; 
				\draw[draw=none,fill=white] (111,1) rectangle (159,25);
				\draw[{|[width=4pt]}-{|[width=4pt]},thick] (115,9) -- (155,9) node[above=-2pt,pos=0.5] {10\,\textmu m}; 
				\end{axis} 
				\end{tikzpicture}
			\end{tabular}
			\caption{Microstructure images of the specimen under low and high load.}
		\end{subfigure}
		\\[2ex]
		\begin{subfigure}[b]{0.99\textwidth}\centering
			\begin{tabular}{ccc}
				& $u_1$ & $\nabla_x u_1$\\[.1ex]
				TGV
				&
				\raisebox{-.5\height}{
					\begin{tikzpicture}\tikzstyle{every node}=[font=\tiny] 
					\begin{axis}[ 
					width=.40\textwidth, 
					enlargelimits=false, 
					hide axis, 
					axis equal image, 
					axis on top, 
					colorbar, 
					scaled ticks=false, 
					colormap name=parula, 
					colorbar right, 
					colorbar style={ 
						width=0.15cm, 
						overlay
					}
					] 
					\addplot[surf,point meta min=-1.195, point meta max=3.4075] 
					graphics[xmin=0, xmax=600, ymin=0, ymax=800] 
					{./images/strain/results_Serie58/image_1.png}; 
					\addplot[opacity=0.5]  
					graphics[xmin=0, xmax=600, ymin=0, ymax=800] 
					{./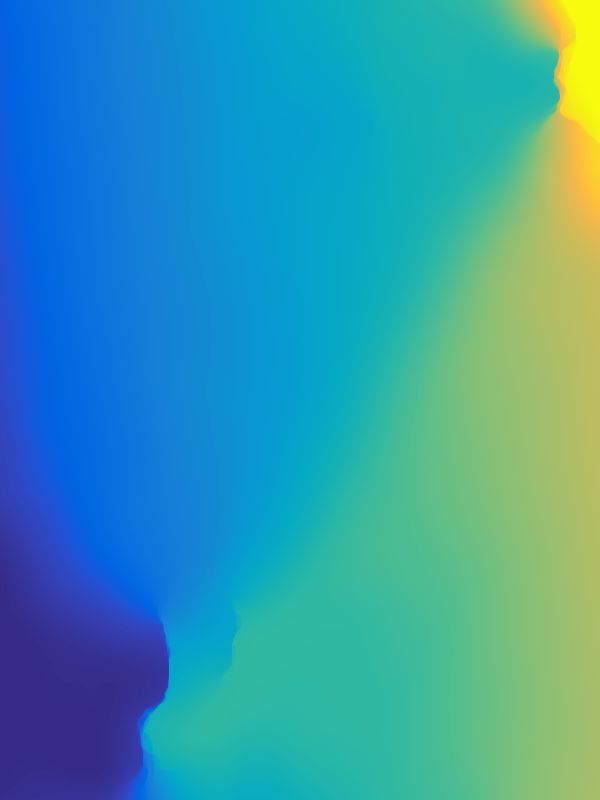}; 
					\end{axis} 
					\end{tikzpicture} 
				}
				&	
				\raisebox{-.5\height}{\hspace{0.3cm}
					\begin{tikzpicture}\tikzstyle{every node}=[font=\tiny] 
					\begin{axis}[ 
					width=.40\textwidth, 
					enlargelimits=false, 
					hide axis, 
					axis equal image, 
					axis on top
					] 
					\addplot[surf,point meta min=-0.027, point meta max=0.251]  
					graphics[xmin=0, xmax=600, ymin=0, ymax=800] 
					{./images/strain/results_Serie58/image_1.png}; 
					\addplot[opacity=0.5]  
					graphics[xmin=0, xmax=600, ymin=0, ymax=800] 
					{./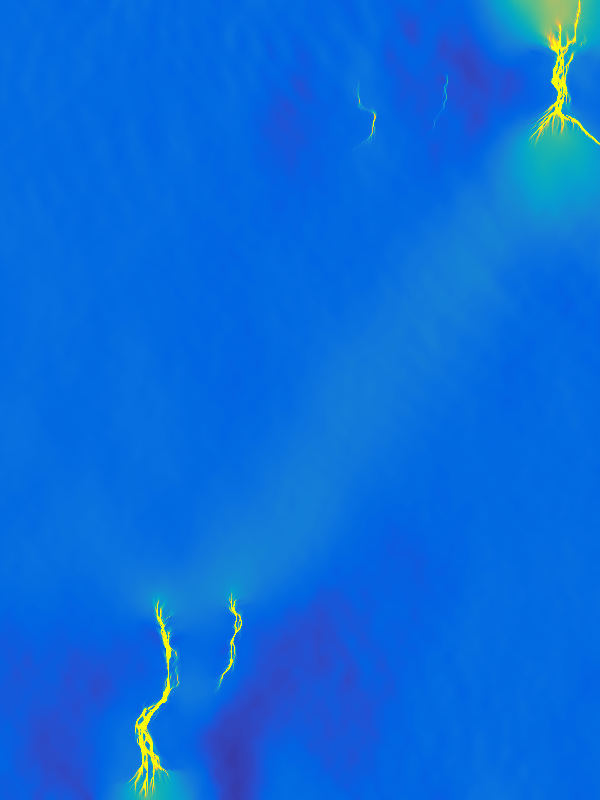}; 
					\end{axis} 
					\end{tikzpicture} 
					\begin{tikzpicture}\tikzstyle{every node}=[font=\tiny] 
					\begin{axis}[ 
					width=.40\textwidth, 
					enlargelimits=false, 
					hide axis, 
					axis equal image, 
					axis on top, 
					colorbar, 
					scaled ticks=false, 
					colormap name=parula, 
					colorbar right, 
					colorbar style={ 
						width=0.15cm,
						overlay 
					} 
					] 
					\addplot[surf,point meta min=-0.027, point meta max=0.251] 
					graphics[xmin=0, xmax=160, ymin=0, ymax=140] 
					{./images/strain/results_Serie58/regionTGV_im0.png}; 
					\addplot[opacity=0.5]  
					graphics[xmin=0, xmax=160, ymin=0, ymax=140] 
					{./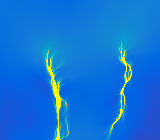}; 
					\end{axis} 
					\end{tikzpicture}\hspace{0.3cm}
				} 
				\\[10ex]
				IC
				&
				\raisebox{-.5\height}{
					\begin{tikzpicture}\tikzstyle{every node}=[font=\tiny] 
					\begin{axis}[ 
					width=.40\textwidth, 
					enlargelimits=false, 
					hide axis, 
					axis equal image, 
					axis on top, 
					colorbar, 
					scaled ticks=false, 
					colormap name=parula, 
					colorbar right, 
					colorbar style={ 
						width=0.15cm, 
						overlay
					}
					] 
					\addplot[surf,point meta min=-1.195, point meta max=3.4075] 
					graphics[xmin=0, xmax=600, ymin=0, ymax=800] 
					{./images/strain/results_Serie58/image_1.png}; 
					\addplot[opacity=0.5]  
					graphics[xmin=0, xmax=600, ymin=0, ymax=800] 
					{./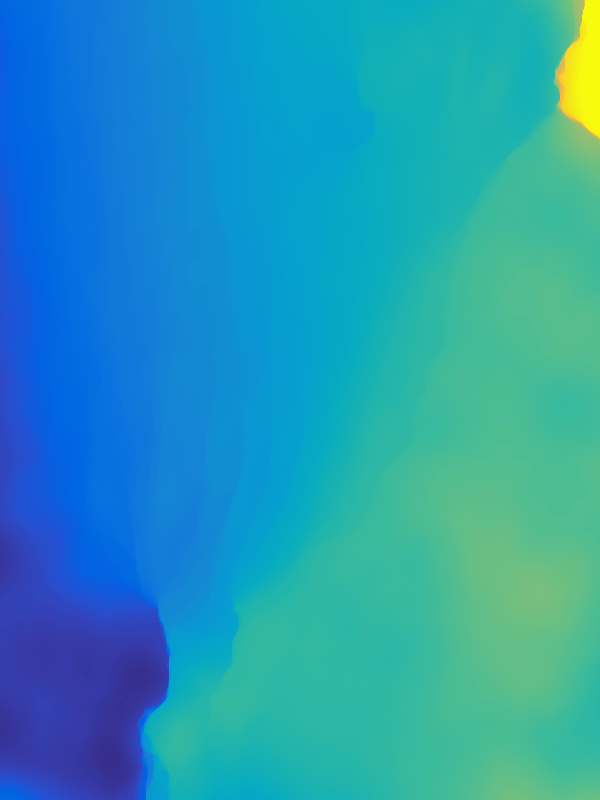}; 
					\end{axis} 
					\end{tikzpicture} 
				}
				&	
				\raisebox{-.5\height}{\hspace{0.3cm}
					\begin{tikzpicture}\tikzstyle{every node}=[font=\tiny] 
					\begin{axis}[ 
					width=.40\textwidth, 
					enlargelimits=false, 
					hide axis, 
					axis equal image, 
					axis on top
					] 
					\addplot[surf,point meta min=-0.027, point meta max=0.251]  
					graphics[xmin=0, xmax=600, ymin=0, ymax=800] 
					{./images/strain/results_Serie58/image_1.png}; 
					\addplot[opacity=0.5]  
					graphics[xmin=0, xmax=600, ymin=0, ymax=800] 
					{./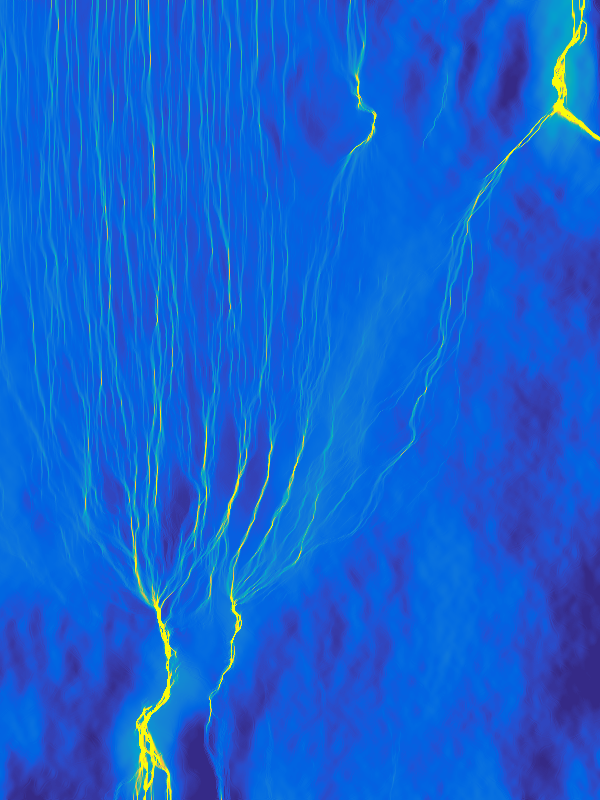}; 
					\end{axis} 
					\end{tikzpicture} 
					\begin{tikzpicture}\tikzstyle{every node}=[font=\tiny] 
					\begin{axis}[ 
					width=.40\textwidth, 
					enlargelimits=false, 
					hide axis, 
					axis equal image, 
					axis on top, 
					colorbar, 
					scaled ticks=false, 
					colormap name=parula, 
					colorbar right, 
					colorbar style={ 
						width=0.15cm, 
						overlay
					} 
					] 
					\addplot[surf,point meta min=-0.027, point meta max=0.251] 
					graphics[xmin=0, xmax=160, ymin=0, ymax=140] 
					{./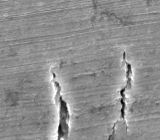}; 
					\addplot[opacity=0.5]  
					graphics[xmin=0, xmax=160, ymin=0, ymax=140] 
					{./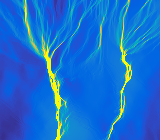}; 
					\end{axis} 
					\end{tikzpicture}\hspace{0.3cm}
				} 
			\end{tabular}
			\caption{Results of the TGV and IC models showing the displacement $u_1$ in \textmu m ($1$\,\textmu m $= 4$\,px) and the strain $\nabla_x u_1$.}
		\end{subfigure}
		\caption{ \label{fig:real_tgv_ic}
			Comparison of IC and TGV models for a real-world example 
			where the cracks widen up but the surrounding aluminum matrix deforms smoothly.} 
	\end{figure}	
	
	In our next example, we are interested in the detection of different kind of cracks.
	\textbf{Figure \ref{fig:real_tgv}} shows
	other real-world data arising from a tensile test.
	Cracks correspond to peaks in $\nabla_x u_1$.
	In the enlarged regions, three different types of cracks are shown, 
	namely 
	\begin{enumerate}
		\item
		a crack due to decohesion of a particle from the surrounding matrix, 
		\item
		a crack in the aluminum matrix, and 
		\item
		a crack inside a particle.
	\end{enumerate}
	
	\begin{figure}
		\centering
		\begin{subfigure}[b]{0.99\textwidth}\centering
			\begin{tabular}{cc}
				$\sigma_0 = 0$\,MPa & $\sigma_1 = 700$\,MPa \hspace{0.07cm} \\
				\begin{tikzpicture}\tikzstyle{every node}=[font=\tiny] 
				\begin{axis}[ 
				width=.52\textwidth, 
				enlargelimits=false, 
				hide axis, 
				axis equal image, 
				axis on top
				] 
				\addplot 
				graphics[xmin=0, xmax=750, ymin=0, ymax=600] 
				{./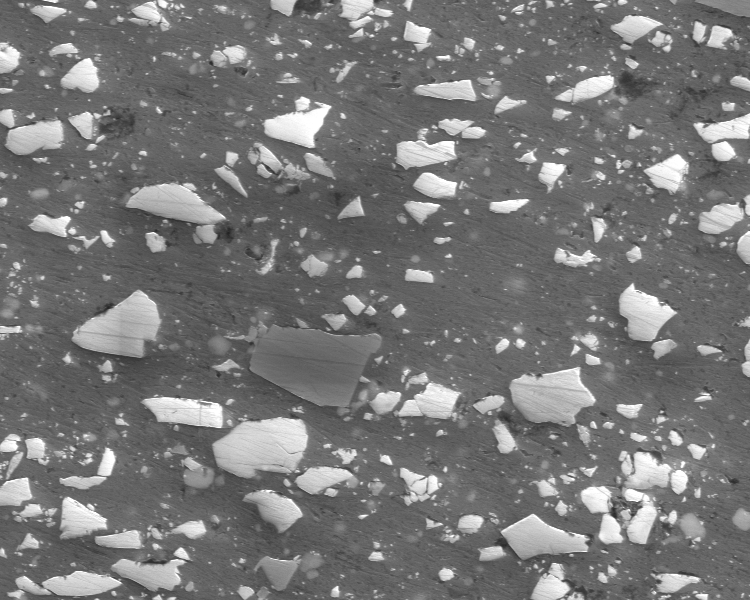};  
				\draw[color=black,thick] (216,212) rectangle (295,291);
				\draw[color=black,thick] (226,305) rectangle (305,384);
				\draw[color=black,thick] (159,82) rectangle (238,161);
				\draw[draw=none,fill=white] (554,5) rectangle (744,80);
				\draw[{|[width=4pt]}-{|[width=4pt]},thick] (569,30) -- (729,30) node[above=-2pt,pos=0.5] {10\,\textmu m};
				\end{axis} 
				\end{tikzpicture}
				& 
				\hspace{0.58cm}
				\begin{tikzpicture}\tikzstyle{every node}=[font=\tiny] 
				\begin{axis}[ 
				width=.52\textwidth, 
				enlargelimits=false, 
				hide axis, 
				axis equal image, 
				axis on top
				] 
				\addplot 
				graphics[xmin=0, xmax=750, ymin=0, ymax=600] 
				{./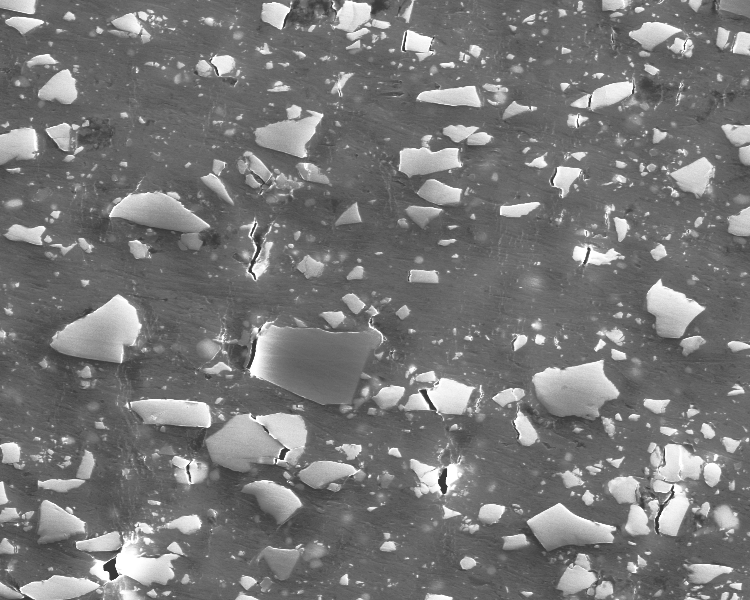};  
				\draw[color=black,thick] (216,212) rectangle (295,291);
				\draw[color=black,thick] (226,305) rectangle (305,384);
				\draw[color=black,thick] (159,82) rectangle (238,161);
				\draw[draw=none,fill=white] (554,5) rectangle (744,80);
				\draw[{|[width=4pt]}-{|[width=4pt]},thick] (569,30) -- (729,30) node[above=-2pt,pos=0.5] {10\,\textmu m};
				\end{axis} 
				\end{tikzpicture}\hspace{0.72cm}
				\\
				\begin{tikzpicture}\tikzstyle{every node}=[font=\tiny] 
				\begin{axis}[ 
				width=.145\textwidth, 
				enlargelimits=false, 
				xlabel={x}, 
				ylabel={y}, 
				scale only axis, 
				hide axis, 
				axis equal image, 
				axis on top, 
				scaled ticks=false, 
				] 
				\addplot[surf,point meta min=-5.1702, point meta max=10.5975]  
				graphics[xmin=0, xmax=80, ymin=0, ymax=80] 
				{./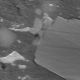}; 
				\draw[draw=none,fill=white] (44.5,1.5) rectangle (78.5,24);
				\draw[{|[width=4pt]}-{|[width=4pt]},thick] (45.5,9) -- (77.5,9) node[above=-2pt,pos=0.5] {2\,\textmu m};
				\end{axis} 
				\end{tikzpicture}
				\begin{tikzpicture}\tikzstyle{every node}=[font=\tiny] 
				\begin{axis}[ 
				width=.145\textwidth, 
				enlargelimits=false, 
				xlabel={x}, 
				ylabel={y}, 
				scale only axis, 
				hide axis, 
				axis equal image, 
				axis on top,  
				scaled ticks=false, 
				] 
				\addplot[surf,point meta min=-5.1702, point meta max=10.5975]  
				graphics[xmin=0, xmax=80, ymin=0, ymax=80] 
				{./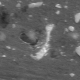}; 
				\draw[draw=none,fill=white] (44.5,1.5) rectangle (78.5,24);
				\draw[{|[width=4pt]}-{|[width=4pt]},thick] (45.5,9) -- (77.5,9) node[above=-2pt,pos=0.5] {2\,\textmu m};
				\end{axis} 
				\end{tikzpicture}
				\begin{tikzpicture}\tikzstyle{every node}=[font=\tiny] 
				\begin{axis}[ 
				width=.145\textwidth, 
				enlargelimits=false, 
				xlabel={x}, 
				ylabel={y}, 
				scale only axis, 
				hide axis, 
				axis equal image, 
				axis on top
				] 
				\addplot[point meta min=-0.01,
				point meta max=0.344] 
				graphics[xmin=0, xmax=80, ymin=0, ymax=80] 
				{./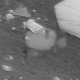};
				\draw[draw=none,fill=white] (44.5,1.5) rectangle (78.5,24);
				\draw[{|[width=4pt]}-{|[width=4pt]},thick] (45.5,9) -- (77.5,9) node[above=-2pt,pos=0.5] {2\,\textmu m};
				\end{axis} 
				\end{tikzpicture}
				&
				\begin{tikzpicture}\tikzstyle{every node}=[font=\tiny] 
				\begin{axis}[ 
				width=.145\textwidth, 
				enlargelimits=false, 
				xlabel={x}, 
				ylabel={y}, 
				scale only axis, 
				hide axis, 
				axis equal image, 
				axis on top, 
				scaled ticks=false, 
				]  
				\addplot[surf,point meta min=-5.1702, point meta max=10.5975]  
				graphics[xmin=0, xmax=80, ymin=0, ymax=80] 
				{./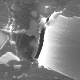}; 
				\draw[draw=none,fill=white] (44.5,1.5) rectangle (78.5,24);
				\draw[{|[width=4pt]}-{|[width=4pt]},thick] (45.5,9) -- (77.5,9) node[above=-2pt,pos=0.5] {2\,\textmu m};
				\end{axis} 
				\end{tikzpicture}
				\begin{tikzpicture}\tikzstyle{every node}=[font=\tiny] 
				\begin{axis}[ 
				width=.145\textwidth, 
				enlargelimits=false, 
				xlabel={x}, 
				ylabel={y}, 
				scale only axis, 
				hide axis, 
				axis equal image, 
				axis on top,  
				scaled ticks=false, 
				]  
				\addplot[surf,point meta min=-5.1702, point meta max=10.5975]  
				graphics[xmin=0, xmax=80, ymin=0, ymax=80] 
				{./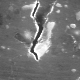}; 
				\draw[draw=none,fill=white] (44.5,1.5) rectangle (78.5,24);
				\draw[{|[width=4pt]}-{|[width=4pt]},thick] (45.5,9) -- (77.5,9) node[above=-2pt,pos=0.5] {2\,\textmu m};
				\end{axis} 
				\end{tikzpicture}
				\begin{tikzpicture}\tikzstyle{every node}=[font=\tiny] 
				\begin{axis}[ 
				width=.145\textwidth, 
				enlargelimits=false, 
				xlabel={x}, 
				ylabel={y}, 
				scale only axis, 
				hide axis, 
				axis equal image, 
				axis on top
				] 
				\addplot[point meta min=-0.01,
				point meta max=0.344] 
				graphics[xmin=0, xmax=80, ymin=0, ymax=80] 
				{./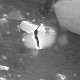}; 
				\draw[draw=none,fill=white] (44.5,1.5) rectangle (78.5,24);
				\draw[{|[width=4pt]}-{|[width=4pt]},thick] (45.5,9) -- (77.5,9) node[above=-2pt,pos=0.5] {2\,\textmu m};
				\end{axis} 
				\end{tikzpicture}
			\end{tabular} 
			\caption{Microstructure images of the specimen under low and high load.}
		\end{subfigure}
		\\[2ex]
		\begin{subfigure}[b]{0.99\textwidth}\centering
			\begin{tabular}{cc}
				$u_1$ & $\nabla_x u_1$	\\
				\begin{tikzpicture}\tikzstyle{every node}=[font=\tiny] 
				\begin{axis}[ 
				width=.52\textwidth, 
				enlargelimits=false, 
				hide axis, 
				axis equal image, 
				axis on top, 
				colorbar, 
				scaled ticks=false, 
				colormap name=parula, 
				colorbar right, 
				colorbar style={ 
					width=0.15cm,
					overlay
				}
				] 
				\addplot[point meta min=-1.71,
				point meta max=2.2]
				graphics[xmin=0, xmax=750, ymin=0, ymax=600] 
				{./images/strain/results_Serie5/image_1.png}; 
				\addplot[opacity=0.5]  
				graphics[xmin=0, xmax=750, ymin=0, ymax=600] 
				{./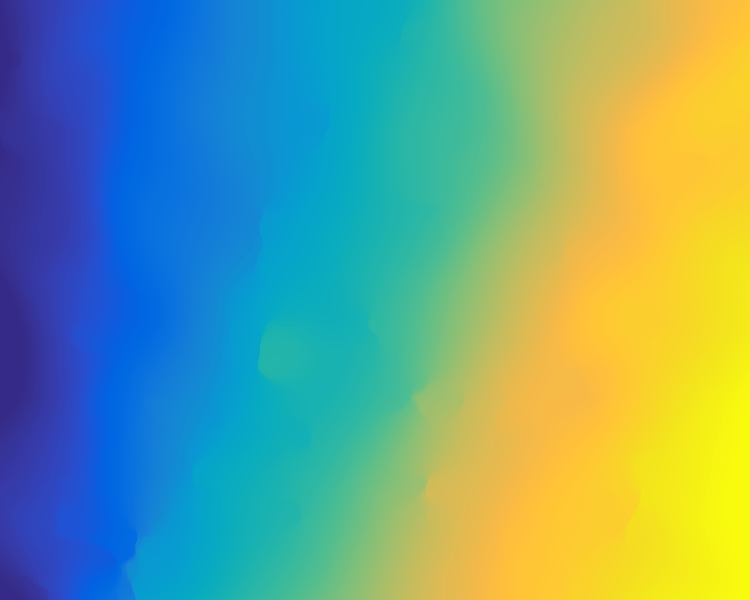}; 
				\end{axis} 
				\end{tikzpicture}
				&
				\hspace{.3cm}
				\begin{tikzpicture}\tikzstyle{every node}=[font=\tiny] 
				\begin{axis}[ 
				width=.52\textwidth, 
				enlargelimits=false, 
				hide axis, 
				axis equal image, 
				axis on top, 
				colorbar, 
				scaled ticks=false, 
				colormap name=parula, 
				colorbar right, 
				colorbar style={ 
					width=0.15cm,
					overlay
				}
				] 
				\addplot[point meta min=-0.01,
				point meta max=0.344]
				graphics[xmin=0, xmax=750, ymin=0, ymax=600] 
				{./images/strain/results_Serie5/image_1.png}; 
				\addplot[opacity=0.5]  
				graphics[xmin=0, xmax=750, ymin=0, ymax=600] 
				{./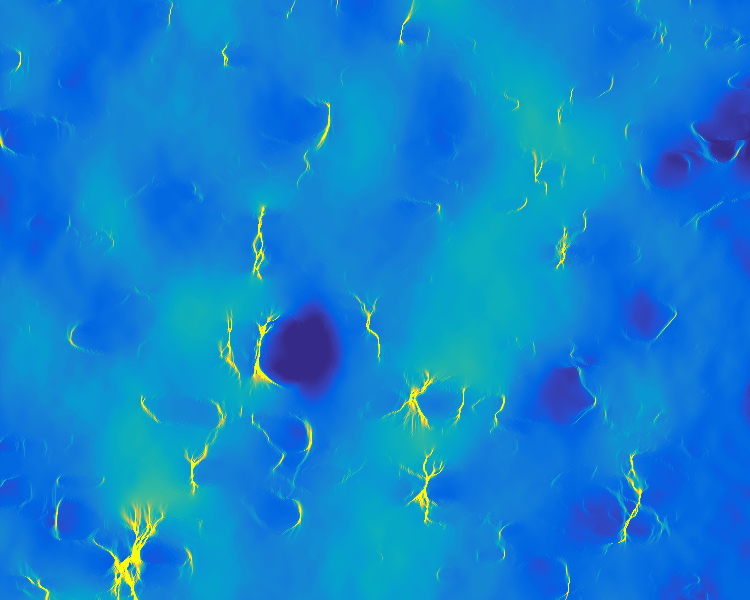}; 
				\end{axis} 
				\end{tikzpicture} 
				\hspace{.3cm}
				\\ 
				&
				\begin{tikzpicture}\tikzstyle{every node}=[font=\tiny] 
				\begin{axis}[ 
				width=.145\textwidth, 
				enlargelimits=false, 
				xlabel={x}, 
				ylabel={y}, 
				scale only axis, 
				hide axis, 
				axis equal image, 
				axis on top, 
				scaled ticks=false, 
				] 
				\addplot[surf,point meta min=-5.1702, point meta max=10.5975]  
				graphics[xmin=0, xmax=80, ymin=0, ymax=80] 
				{./images/strain/results_Serie5/regiona_im0.png}; 
				\addplot[surf,point meta min=-5.1702, point meta max=10.5975,opacity=0.5]  
				graphics[xmin=0, xmax=80, ymin=0, ymax=80] 
				{./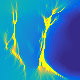}; 
				\end{axis} 
				\end{tikzpicture}
				\begin{tikzpicture}\tikzstyle{every node}=[font=\tiny] 
				\begin{axis}[ 
				width=.145\textwidth, 
				enlargelimits=false, 
				xlabel={x}, 
				ylabel={y}, 
				scale only axis, 
				hide axis, 
				axis equal image, 
				axis on top,  
				scaled ticks=false, 
				] 
				\addplot[surf,point meta min=-5.1702, point meta max=10.5975]  
				graphics[xmin=0, xmax=80, ymin=0, ymax=80] 
				{./images/strain/results_Serie5/regionb_im0.png}; 
				\addplot[surf,point meta min=-5.1702, point meta max=10.5975,opacity=0.5]  
				graphics[xmin=0, xmax=80, ymin=0, ymax=80] 
				{./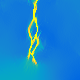}; 
				\end{axis} 
				\end{tikzpicture}
				\begin{tikzpicture}\tikzstyle{every node}=[font=\tiny] 
				\begin{axis}[ 
				width=.145\textwidth, 
				enlargelimits=false, 
				xlabel={x}, 
				ylabel={y}, 
				scale only axis, 
				hide axis, 
				axis equal image, 
				axis on top
				] 
				\addplot[point meta min=-0.01,
				point meta max=0.344] 
				graphics[xmin=0, xmax=80, ymin=0, ymax=80] 
				{./images/strain/results_Serie5/regionc_im0.png}; 
				\addplot[opacity=0.5]  
				graphics[xmin=0, xmax=80, ymin=0, ymax=80] 
				{./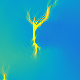}; 
				\end{axis} 
				\end{tikzpicture}\hspace{0.3cm}
			\end{tabular}
			\caption{Results of the TGV model showing the displacement $u_1$ in \textmu m ($1$\,\textmu m $= 16$\,px) and the strain $\nabla_x u_1$.}
		\end{subfigure}
		\caption{Detection of different kind of cracks by the TGV model.
		} \label{fig:real_tgv}
	\end{figure}
	
	Next, we are interested in crack propagation.	
	\textbf{Figure~\ref{fig:enlarge_tgv}} shows the displacement $u_1$ and the strain $\varepsilon_{11} = \nabla_x u_1$ for certain regions under different load.
	It is remarkable that even under low load, when the cracks are not or hardly visible in the images, 
	the strain in the corresponding regions is large. 
	Thus, it is a sensitive and useful tool to study crack initiation mechanisms.
	\begin{figure}
		\centering
		\begin{subfigure}[b]{0.99\textwidth}\centering
			\begin{tikzpicture}\tikzstyle{every node}=[font=\tiny] 
			\begin{axis}[ 
			width=.367\textwidth, 
			enlargelimits=false, 
			xlabel={x}, 
			ylabel={y}, 
			scale only axis, 
			hide axis, 
			axis equal image, 
			axis on top, 
			scaled ticks=false, 
			] 
			\addplot[point meta min=0, point meta max=0.1]
			graphics[xmin=0, xmax=800, ymin=0, ymax=600] 
			{./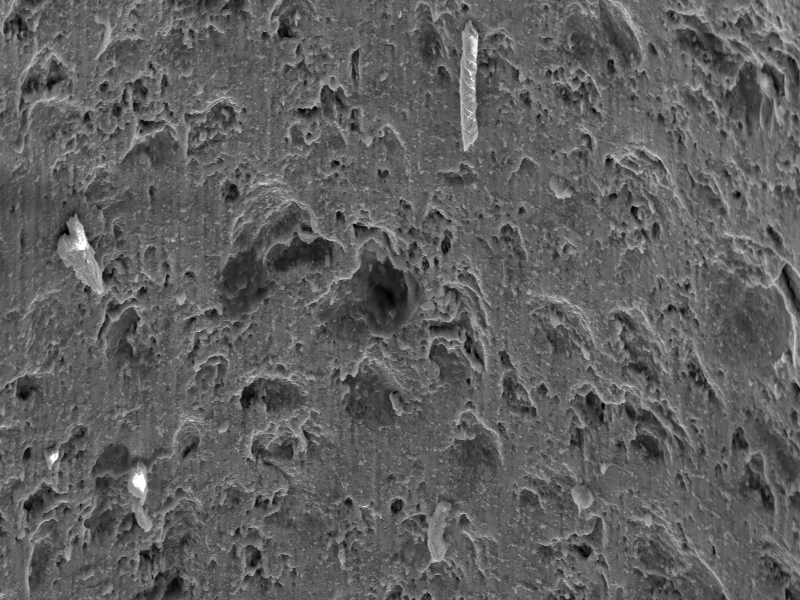};
			\draw[color=black,thick] (403,312) rectangle (482,391);
			\draw[color=black,thick] (430,221) rectangle (509,300);
			\draw[draw=none,fill=white] (564,5) rectangle (794,80);
			\draw[{|[width=4pt]}-{|[width=4pt]},thick] (579,30) -- (779,30) node[above=-2pt,pos=0.5] {100\,\textmu m};
			\end{axis} 
			\end{tikzpicture}
			\begin{tikzpicture}\tikzstyle{every node}=[font=\tiny] 
			\begin{axis}[ 
			width=.32\textwidth, 
			enlargelimits=false, 
			xlabel={x}, 
			ylabel={y}, 
			scale only axis, 
			hide axis, 
			axis equal image, 
			axis on top, 
			scaled ticks=false, 
			] 
			\addplot[surf,point meta min=-5.1702, point meta max=10.5975]  
			graphics[xmin=0, xmax=80, ymin=0, ymax=80] 
			{./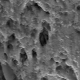}; 
			\addplot[surf,point meta min=-5.1702, point meta max=10.5975]  
			graphics[xmin=0, xmax=80, ymin=-90, ymax=-10] 
			{./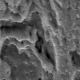};   
			\draw[draw=none,fill=white] (36.5,1.5) rectangle (78.5,24);
			\draw[{|[width=4pt]}-{|[width=4pt]},thick] (37.5,9) -- (77.5,9) node[above=-2pt,pos=0.5] {20\,\textmu m};  
			\draw[draw=none,fill=white] (36.5,-88.5) rectangle (78.5,-66);
			\draw[{|[width=4pt]}-{|[width=4pt]},thick] (37.5,-81) -- (77.5,-81) node[above=-2pt,pos=0.5] {20\,\textmu m};
			\end{axis} 
			\end{tikzpicture}
			\includegraphics[width=0.45\textwidth]{./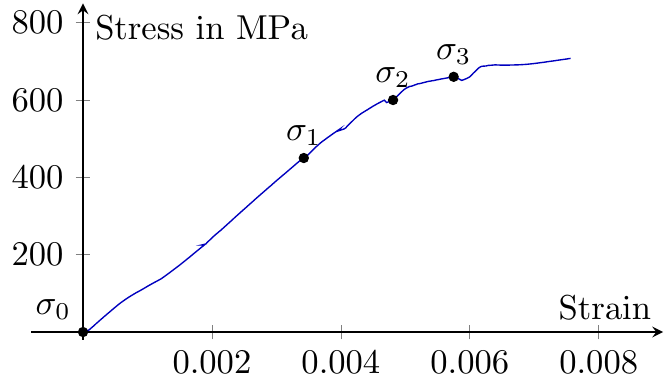}
			\caption{Microstructure images without load ($\sigma_0 = 0$\,MPa) and stress-strain curve.}
		\end{subfigure}
		\\[2ex]
		\begin{subfigure}[b]{0.99\textwidth}\centering
			\begin{tabular}{cccc}
				&
				$\sigma_1 = 450$\,MPa &
				$\sigma_2 = 600$\,MPa &
				$\sigma_3 = 660$\,MPa \qquad \   \\
				&
				\hspace{0.05cm}
				\begin{tikzpicture}\tikzstyle{every node}=[font=\tiny] 
				\begin{axis}[ 
				width=.22\textwidth, 
				enlargelimits=false, 
				xlabel={x}, 
				ylabel={y}, 
				scale only axis, 
				hide axis, 
				axis equal image, 
				axis on top, 
				scaled ticks=false, 
				] 
				\addplot[point meta min=0, point meta max=0.1]
				graphics[xmin=0, xmax=800, ymin=0, ymax=600] 
				{./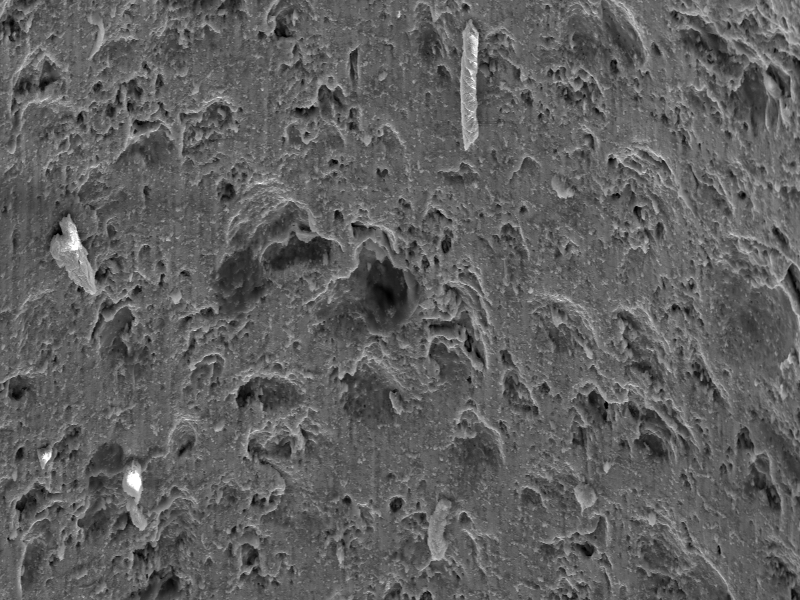};
				\end{axis} 
				\end{tikzpicture}
				&
				\hspace{0.05cm}
				\begin{tikzpicture}\tikzstyle{every node}=[font=\tiny] 
				\begin{axis}[ 
				width=.22\textwidth, 
				enlargelimits=false, 
				xlabel={x}, 
				ylabel={y}, 
				scale only axis, 
				hide axis, 
				axis equal image, 
				axis on top, 
				scaled ticks=false, 
				] 
				\addplot[point meta min=0, point meta max=0.1]
				graphics[xmin=0, xmax=800, ymin=0, ymax=600] 
				{./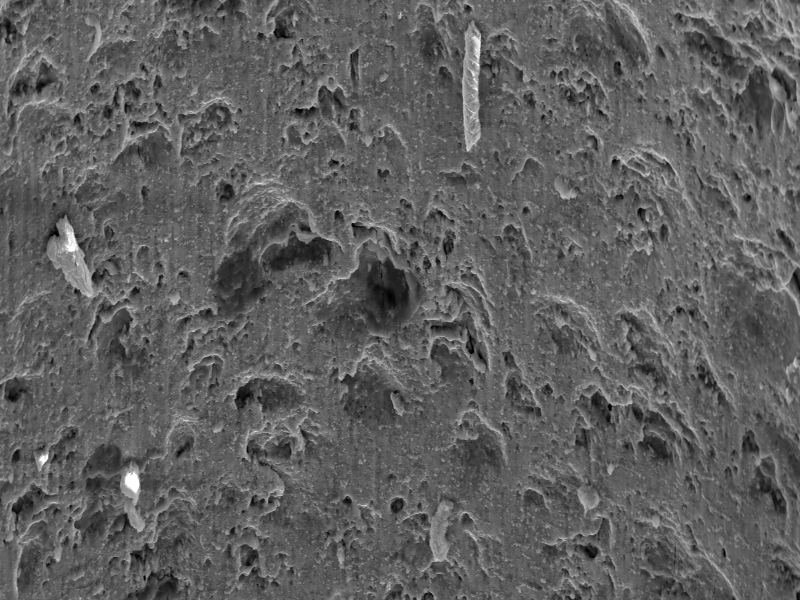};
				\end{axis} 
				\end{tikzpicture}
				&
				\begin{tikzpicture}\tikzstyle{every node}=[font=\tiny] 
				\begin{axis}[ 
				width=.22\textwidth, 
				enlargelimits=false, 
				xlabel={x}, 
				ylabel={y}, 
				scale only axis, 
				hide axis, 
				axis equal image, 
				axis on top, 
				scaled ticks=false, 
				] 
				\addplot[point meta min=0, point meta max=0.1]
				graphics[xmin=0, xmax=800, ymin=0, ymax=600] 
				{./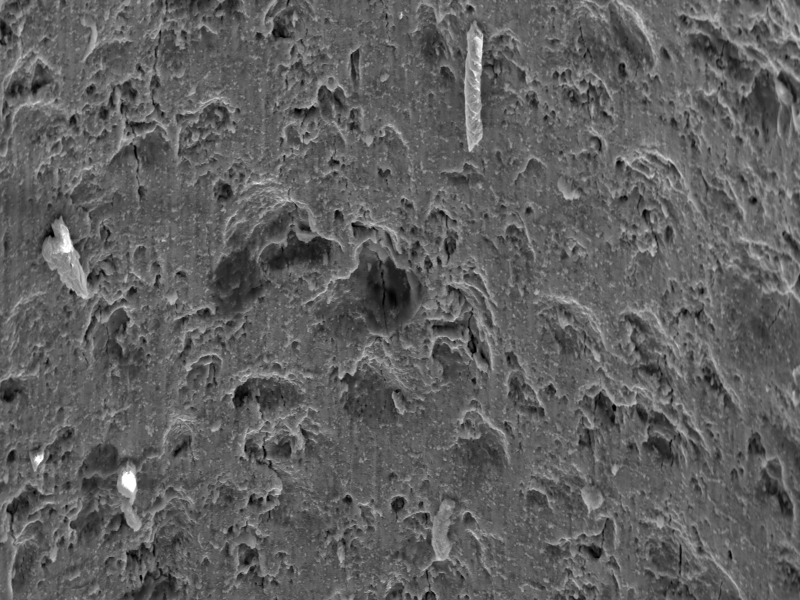};
				\end{axis} 
				\end{tikzpicture}\hspace{0.72cm}
				\\[1ex]
				$u_1$ &
				\raisebox{-.5\height}{
					\begin{tikzpicture}\tikzstyle{every node}=[font=\tiny] 
					\begin{axis}[ 
					width=.22\textwidth, 
					enlargelimits=false, 
					xlabel={x}, 
					ylabel={y}, 
					scale only axis, 
					hide axis, 
					axis equal image, 
					axis on top, 
					scaled ticks=false, 
					] 
					\addplot
					graphics[xmin=0, xmax=800, ymin=0, ymax=600] 
					{./images/strain/results_Serie59/image_1.png}; 
					\addplot[opacity=0.5]  
					graphics[xmin=0, xmax=800, ymin=0, ymax=600] 
					{./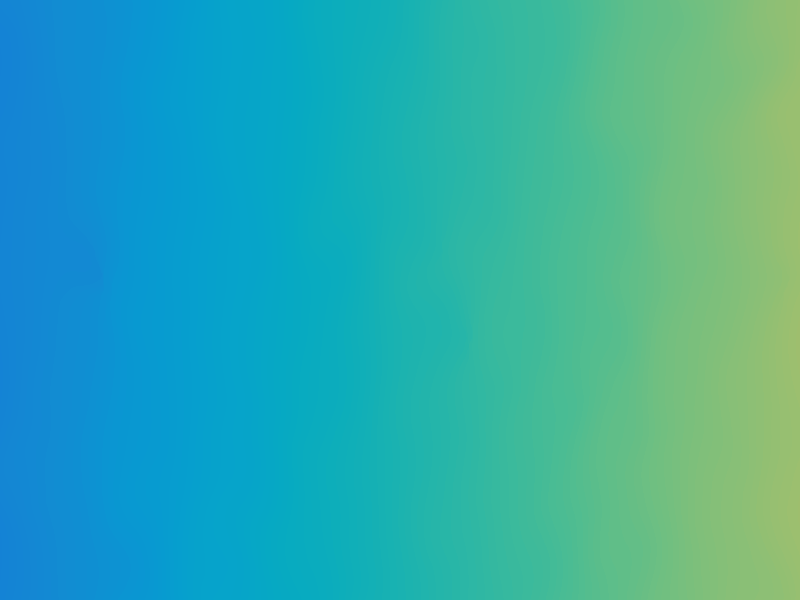};  
					\end{axis} 
					\end{tikzpicture}}
				&
				\raisebox{-.5\height}{
					\begin{tikzpicture}\tikzstyle{every node}=[font=\tiny] 
					\begin{axis}[ 
					width=.22\textwidth, 
					enlargelimits=false, 
					xlabel={x}, 
					ylabel={y}, 
					scale only axis, 
					hide axis, 
					axis equal image, 
					axis on top, 
					scaled ticks=false, 
					] 
					\addplot
					graphics[xmin=0, xmax=800, ymin=0, ymax=600] 
					{./images/strain/results_Serie59/image_1.png}; 
					\addplot[opacity=0.5]  
					graphics[xmin=0, xmax=800, ymin=0, ymax=600] 
					{./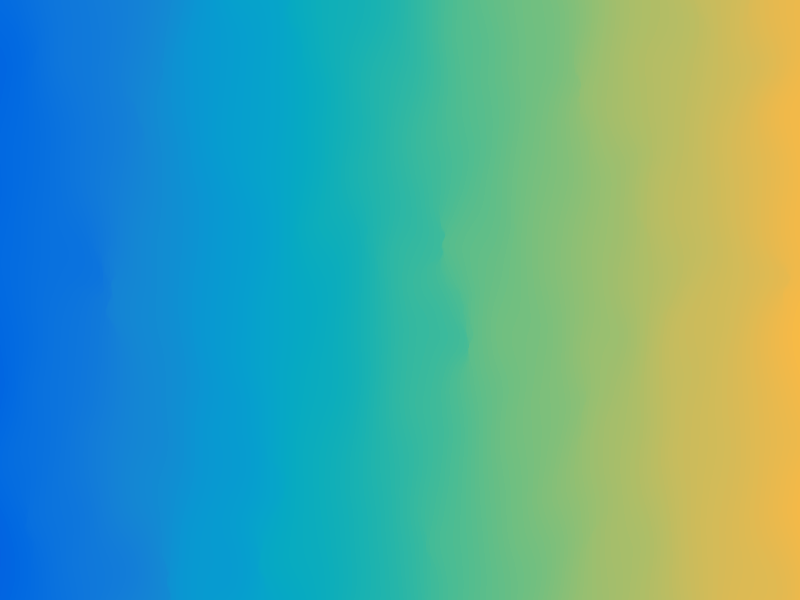};  
					\end{axis} 
					\end{tikzpicture}}
				&
				\raisebox{-.5\height}{
					\begin{tikzpicture}\tikzstyle{every node}=[font=\tiny] 
					\begin{axis}[ 
					width=.22\textwidth, 
					enlargelimits=false, 
					xlabel={x}, 
					ylabel={y}, 
					scale only axis, 
					hide axis, 
					axis equal image, 
					axis on top, 
					colorbar, 
					scaled ticks=false, 
					colormap name=parula, 
					colorbar right, 
					colorbar style={ 
						width=0.15cm,
						overlay
					},
					] 
					\addplot[point meta min=-8.57,point meta max=9.46]
					graphics[xmin=0, xmax=800, ymin=0, ymax=600] 
					{./images/strain/results_Serie59/image_1.png}; 
					\addplot[opacity=0.5]  
					graphics[xmin=0, xmax=800, ymin=0, ymax=600] 
					{./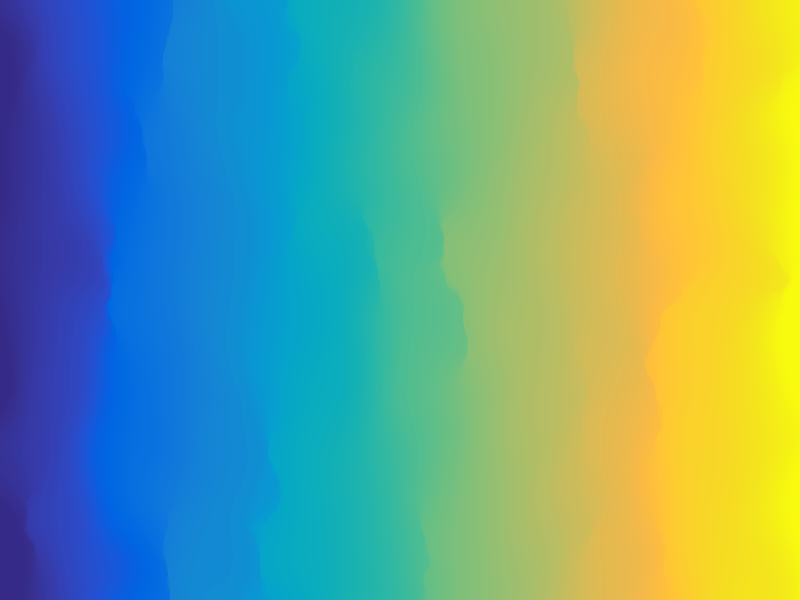};   
					\end{axis} 
					\end{tikzpicture}\hspace{0.6cm}}
				\\[7ex]
				$\nabla_x u_1$ &
				\raisebox{-.5\height}{
					\begin{tikzpicture}\tikzstyle{every node}=[font=\tiny] 
					\begin{axis}[ 
					width=.22\textwidth, 
					enlargelimits=false, 
					xlabel={x}, 
					ylabel={y}, 
					scale only axis, 
					hide axis, 
					axis equal image, 
					axis on top, 
					scaled ticks=false, 
					] 
					\addplot
					graphics[xmin=0, xmax=800, ymin=0, ymax=600] 
					{./images/strain/results_Serie59/image_1.png}; 
					\addplot[opacity=0.5]  
					graphics[xmin=0, xmax=800, ymin=0, ymax=600] 
					{./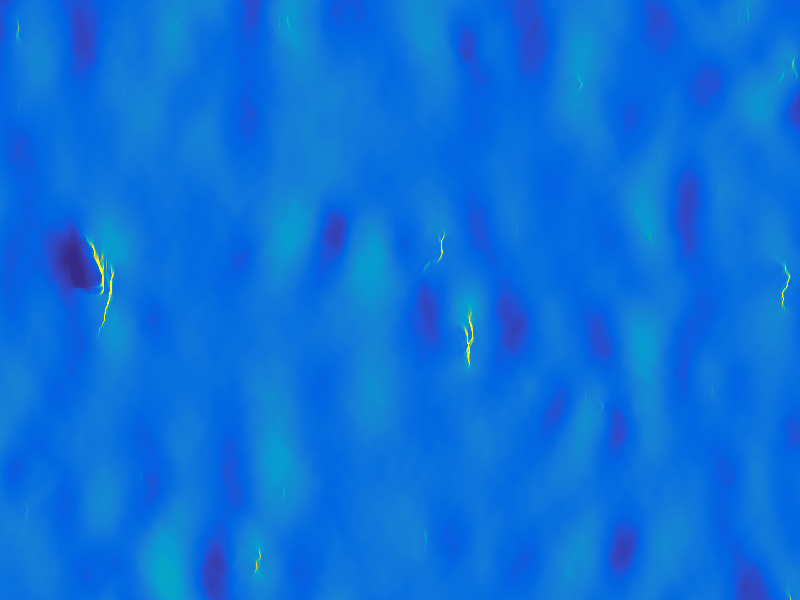};  
					\end{axis} 
					\end{tikzpicture}}
				&
				\raisebox{-.5\height}{
					\begin{tikzpicture}\tikzstyle{every node}=[font=\tiny] 
					\begin{axis}[ 
					width=.22\textwidth, 
					enlargelimits=false, 
					xlabel={x}, 
					ylabel={y}, 
					scale only axis, 
					hide axis, 
					axis equal image, 
					axis on top, 
					scaled ticks=false, 
					] 
					\addplot
					graphics[xmin=0, xmax=800, ymin=0, ymax=600] 
					{./images/strain/results_Serie59/image_1.png}; 
					\addplot[opacity=0.5]  
					graphics[xmin=0, xmax=800, ymin=0, ymax=600] 
					{./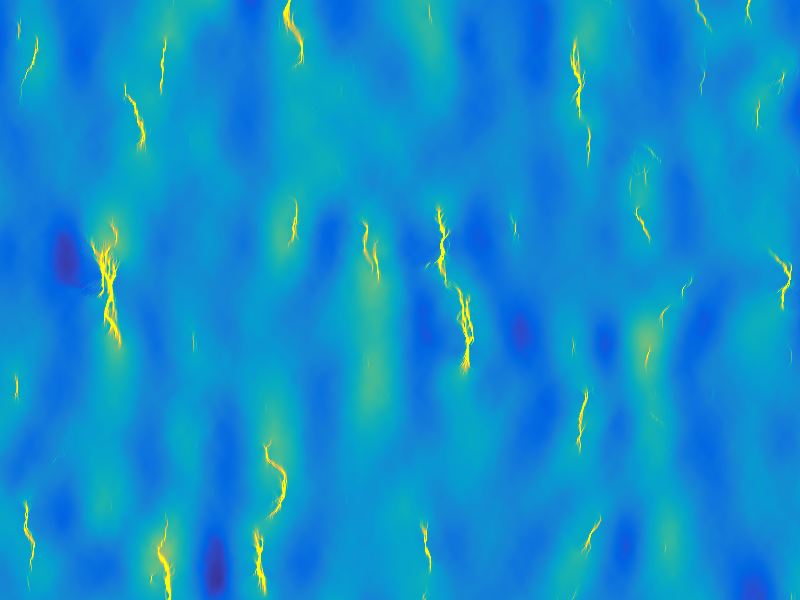};  
					\end{axis} 
					\end{tikzpicture}}
				&			
				\raisebox{-.5\height}{
					\begin{tikzpicture}\tikzstyle{every node}=[font=\tiny] 
					\begin{axis}[ 
					width=.22\textwidth, 
					enlargelimits=false, 
					xlabel={x}, 
					ylabel={y}, 
					scale only axis, 
					hide axis, 
					axis equal image, 
					axis on top, 
					colorbar, 
					scaled ticks=false, 
					colormap name=parula, 
					colorbar right, 
					colorbar style={ 
						width=0.15cm,
						overlay,
						yticklabel style = {
							/pgf/number format/.cd,
							fixed,
							fixed zerofill
							/tikz/.cd
						}
					},
					] 
					\addplot[point meta min=0,point meta max=0.1]
					graphics[xmin=0, xmax=800, ymin=0, ymax=600] 
					{./images/strain/results_Serie59/image_1.png}; 
					\addplot[opacity=0.5]  
					graphics[xmin=0, xmax=800, ymin=0, ymax=600] 
					{./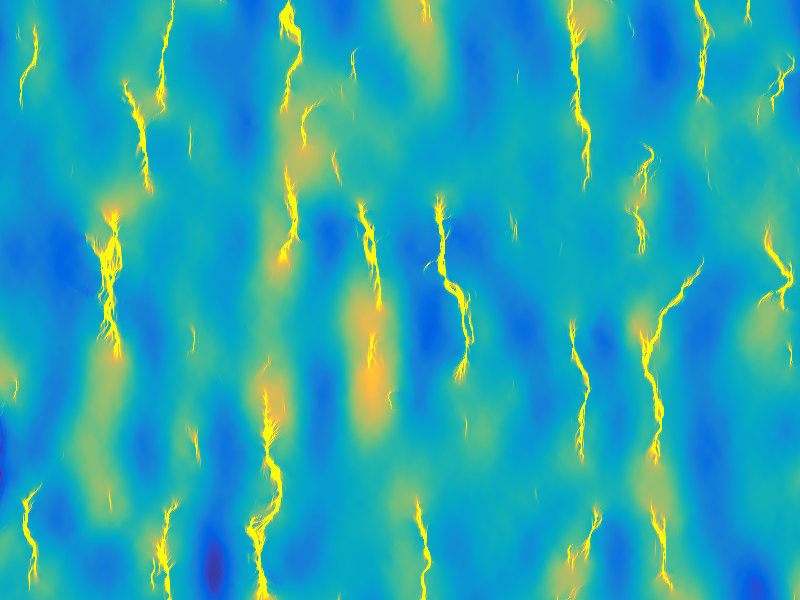};   
					\end{axis} 
					\end{tikzpicture}\hspace{0.6cm}}
			\end{tabular}
			\caption{Displacement $u_1$ in \textmu m ($1$\,\textmu m $= 2$\,px) and the strain $\nabla_x u_1$. The colorbar for the strain is cut off at $0.1$ to make smaller values under low load visible.}
		\end{subfigure}
		\\[2ex]
		\begin{subfigure}[b]{0.99\textwidth}\centering
			\begin{tabular}{ccc}
				$\sigma_1 = 450$\,MPa &
				$\sigma_2 = 600$\,MPa &
				$\sigma_3 = 660$\,MPa \\
				\begin{tikzpicture}\tikzstyle{every node}=[font=\tiny] 
				\begin{axis}[ 
				width=.150\textwidth, 
				enlargelimits=false, 
				xlabel={x}, 
				ylabel={y}, 
				scale only axis, 
				hide axis, 
				axis equal image, 
				axis on top, 
				scaled ticks=false, 
				] 
				\addplot[surf,point meta min=-5.1702, point meta max=10.5975]  
				graphics[xmin=-90, xmax=-10, ymin=0, ymax=80] 
				{./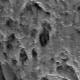};  
				\end{axis} 
				\end{tikzpicture}
				\begin{tikzpicture}\tikzstyle{every node}=[font=\tiny] 
				\begin{axis}[ 
				width=.150\textwidth, 
				enlargelimits=false, 
				xlabel={x}, 
				ylabel={y}, 
				scale only axis, 
				hide axis, 
				axis equal image, 
				axis on top, 
				scaled ticks=false, 
				] 
				\addplot[surf,point meta min=-5.1702, point meta max=10.5975]  
				graphics[xmin=0, xmax=80, ymin=0, ymax=80] 
				{./images/strain/results_Serie59/regiona_2_im0.png}; 
				\addplot[surf,point meta min=-5.1702, point meta max=10.5975,opacity=0.5]  
				graphics[xmin=0, xmax=80, ymin=0, ymax=80] 
				{./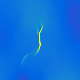}; 
				\end{axis} 
				\end{tikzpicture}
				&
				\begin{tikzpicture}\tikzstyle{every node}=[font=\tiny] 
				\begin{axis}[ 
				width=.150\textwidth, 
				enlargelimits=false, 
				xlabel={x}, 
				ylabel={y}, 
				scale only axis, 
				hide axis, 
				axis equal image, 
				axis on top, 
				scaled ticks=false, 
				] 
				\addplot[surf,point meta min=-5.1702, point meta max=10.5975]  
				graphics[xmin=-90, xmax=-10, ymin=0, ymax=80] 
				{./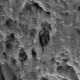}; 
				\end{axis} 
				\end{tikzpicture}
				\begin{tikzpicture}\tikzstyle{every node}=[font=\tiny] 
				\begin{axis}[ 
				width=.150\textwidth, 
				enlargelimits=false, 
				xlabel={x}, 
				ylabel={y}, 
				scale only axis, 
				hide axis, 
				axis equal image, 
				axis on top, 
				scaled ticks=false, 
				] 
				\addplot[surf,point meta min=-5.1702, point meta max=10.5975]  
				graphics[xmin=0, xmax=80, ymin=0, ymax=80] 
				{./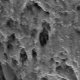}; 
				\addplot[surf,point meta min=-5.1702, point meta max=10.5975,opacity=0.5]  
				graphics[xmin=0, xmax=80, ymin=0, ymax=80] 
				{./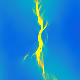}; 
				\end{axis} 
				\end{tikzpicture}
				&
				\begin{tikzpicture}\tikzstyle{every node}=[font=\tiny] 
				\begin{axis}[ 
				width=.150\textwidth, 
				enlargelimits=false, 
				xlabel={x}, 
				ylabel={y}, 
				scale only axis, 
				hide axis, 
				axis equal image, 
				axis on top, 
				scaled ticks=false, 
				] 
				\addplot[surf,point meta min=-5.1702, point meta max=10.5975]  
				graphics[xmin=-90, xmax=-10, ymin=0, ymax=80] 
				{./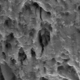}; 
				\end{axis} 
				\end{tikzpicture}
				\begin{tikzpicture}\tikzstyle{every node}=[font=\tiny] 
				\begin{axis}[ 
				width=.150\textwidth, 
				enlargelimits=false, 
				xlabel={x}, 
				ylabel={y}, 
				scale only axis, 
				hide axis, 
				axis equal image, 
				axis on top, 
				scaled ticks=false, 
				] 
				\addplot[surf,point meta min=-5.1702, point meta max=10.5975]  
				graphics[xmin=0, xmax=80, ymin=0, ymax=80] 
				{./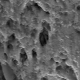}; 
				\addplot[surf,point meta min=-5.1702, point meta max=10.5975,opacity=0.5]  
				graphics[xmin=0, xmax=80, ymin=0, ymax=80] 
				{./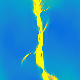}; 
				\end{axis} 
				\end{tikzpicture}
				\\
				\begin{tikzpicture}\tikzstyle{every node}=[font=\tiny] 
				\begin{axis}[ 
				width=.150\textwidth, 
				enlargelimits=false, 
				xlabel={x}, 
				ylabel={y}, 
				scale only axis, 
				hide axis, 
				axis equal image, 
				axis on top, 
				scaled ticks=false, 
				] 
				\addplot[surf,point meta min=-5.1702, point meta max=10.5975]  
				graphics[xmin=-90, xmax=-10, ymin=0, ymax=80] 
				{./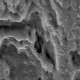}; 
				\end{axis} 
				\end{tikzpicture}
				\begin{tikzpicture}\tikzstyle{every node}=[font=\tiny] 
				\begin{axis}[ 
				width=.150\textwidth, 
				enlargelimits=false, 
				xlabel={x}, 
				ylabel={y}, 
				scale only axis, 
				hide axis, 
				axis equal image, 
				axis on top, 
				scaled ticks=false, 
				] 
				\addplot[surf,point meta min=-5.1702, point meta max=10.5975]  
				graphics[xmin=0, xmax=80, ymin=0, ymax=80] 
				{./images/strain/results_Serie59/regionb_2_im0.png}; 
				\addplot[surf,point meta min=-5.1702, point meta max=10.5975,opacity=0.5]  
				graphics[xmin=0, xmax=80, ymin=0, ymax=80] 
				{./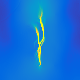}; 
				\end{axis} 
				\end{tikzpicture}
				&
				\begin{tikzpicture}\tikzstyle{every node}=[font=\tiny] 
				\begin{axis}[ 
				width=.150\textwidth, 
				enlargelimits=false, 
				xlabel={x}, 
				ylabel={y}, 
				scale only axis, 
				hide axis, 
				axis equal image, 
				axis on top, 
				scaled ticks=false, 
				] 
				\addplot[surf,point meta min=-5.1702, point meta max=10.5975]  
				graphics[xmin=-90, xmax=-10, ymin=0, ymax=80] 
				{./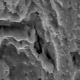}; 
				\end{axis} 
				\end{tikzpicture}
				\begin{tikzpicture}\tikzstyle{every node}=[font=\tiny] 
				\begin{axis}[ 
				width=.150\textwidth, 
				enlargelimits=false, 
				xlabel={x}, 
				ylabel={y}, 
				scale only axis, 
				hide axis, 
				axis equal image, 
				axis on top, 
				scaled ticks=false, 
				] 
				\addplot[surf,point meta min=-5.1702, point meta max=10.5975]  
				graphics[xmin=0, xmax=80, ymin=0, ymax=80] 
				{./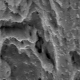}; 
				\addplot[surf,point meta min=-5.1702, point meta max=10.5975,opacity=0.5]  
				graphics[xmin=0, xmax=80, ymin=0, ymax=80] 
				{./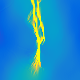}; 
				\end{axis} 
				\end{tikzpicture}
				&
				\begin{tikzpicture}\tikzstyle{every node}=[font=\tiny] 
				\begin{axis}[ 
				width=.150\textwidth, 
				enlargelimits=false, 
				xlabel={x}, 
				ylabel={y}, 
				scale only axis, 
				hide axis, 
				axis equal image, 
				axis on top, 
				scaled ticks=false, 
				] 
				\addplot[surf,point meta min=-5.1702, point meta max=10.5975]  
				graphics[xmin=-90, xmax=-10, ymin=0, ymax=80] 
				{./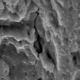}; 
				\end{axis} 
				\end{tikzpicture}
				\begin{tikzpicture}\tikzstyle{every node}=[font=\tiny] 
				\begin{axis}[ 
				width=.150\textwidth, 
				enlargelimits=false, 
				xlabel={x}, 
				ylabel={y}, 
				scale only axis, 
				hide axis, 
				axis equal image, 
				axis on top, 
				scaled ticks=false, 
				] 
				\addplot[surf,point meta min=-5.1702, point meta max=10.5975]  
				graphics[xmin=0, xmax=80, ymin=0, ymax=80] 
				{./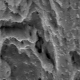}; 
				\addplot[surf,point meta min=-5.1702, point meta max=10.5975,opacity=0.5]  
				graphics[xmin=0, xmax=80, ymin=0, ymax=80] 
				{./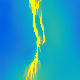}; 
				\end{axis} 
				\end{tikzpicture}
			\end{tabular}
			\caption{Two enlarged regions and the strain $\nabla_x u_1$ for increasing loads.
				The color map for the strain is cut off at $0.1$ to make smaller values  under low load visible.}
		\end{subfigure}
		\caption{Results on crack propagation by the TGV model.} \label{fig:enlarge_tgv}
	\end{figure}
	
	\paragraph{Comparison to Correlation Based Methods.}
	Now, we draw our attention to a comparison of the proposed method to correlation based methods.
	In the following, we use NCorr~\cite{BAA14} for a comparison 
	since it is a freely available software package.
	Note that other state-of-the-art software packages for strain analysis such as Veddac~\cite{Veddac} or VIC~\cite{VIC} are based on similar methods.
	In particular in \cite{HR14}, it is shown that NCorr produces equally good results as VIC.
	The underlying idea of correlation based methods is the comparison of windows around each pixel in certain search windows.
	The window sizes are parameters which must be appropriately chosen. 
	The size of the search window directly influences the computation time and needs to be chosen according to the maximal displacement.
	The smoothness of the results can be steered by the size of the window around the pixels, similar to the regularization parameters in our model.
	Due to the extent of this windows, the local resolution of correlation based methods is limited.
	\textbf{Figure~\ref{fig:tgv_ncorr}} shows three examples where the proposed TGV regularized method is compared to NCorr.
	The first example in Figure~\ref{fig:tgv_ncorr} shows the same specimen as Figure~\ref{fig:real_tgv_ic}.
	Although the cracks are relatively large, it is not possible to resolve them with NCorr and there occur some artifacts around them. 
	Thus, the low resolution is especially a drawback in our applications since we are interested in the local behavior and in particular cracks.
	Besides the cracks, the smooth parts look very similar to the result using the proposed method.
	For the cracks, $a_1$ is a good result.
	A peak is visible at the crack tip, which is the position where the crack is expected to propagate.
	In contrast, the result of NCorr shows a peak everywhere around the crack.
	
	\begin{figure} \centering		
		\begin{tabular}{clll}
			{\small image}\\ {\small under load} 
			&
			{\small \quad \ $\nabla_x u_1$}&
			{\small \qquad $a_{1}$}&
			{\small $\nabla_x u_1$ by NCorr}
			\\ 			
			\begin{tikzpicture}\tikzstyle{every node}=[font=\tiny]
			\begin{axis}[width=.36\textwidth,
			enlargelimits=false, 
			hide axis,
			axis equal image
			]
			\addplot
			graphics[xmin=0, xmax=600, ymin=0, ymax=800]{./images/strain/results_Serie58/image_2.png};
			\draw[color=black,thick] (111,111) rectangle (270,250);
			\draw[draw=none,fill=white] (364,5) rectangle (594,160);
			\draw[{|[width=4pt]}-{|[width=4pt]},thick] (379,50) -- (579,50) node[above=-2pt,pos=0.5] {50\,\textmu m};
			\end{axis}
			\end{tikzpicture} 			
			&
			\begin{tikzpicture}\tikzstyle{every node}=[font=\tiny]
			\begin{axis}[width=.36\textwidth,
			enlargelimits=false, 
			hide axis,
			axis equal image,
			colorbar,
			colormap name=parula,
			colorbar style={overlay, width=0.15cm, yticklabel style={
					/pgf/number format/.cd,
					fixed,
					fixed zerofill,
					/tikz/.cd}}
			]
			\addplot 
			graphics[xmin=0, xmax=6, ymin=0, ymax=8]{./images/strain/results_Serie58/image_1.png};
			\addplot[point meta min=-0.027, point meta max=0.251, opacity=0.5] 
			graphics[xmin=0, xmax=6, ymin=0, ymax=8]{./images/strain/results_Serie58/u1x_TGV.png};
			\end{axis}
			\end{tikzpicture} \hspace{0.5cm}
			&
			\begin{tikzpicture}\tikzstyle{every node}=[font=\tiny]
			\begin{axis}[width=.36\textwidth,
			enlargelimits=false, 
			hide axis,
			axis equal image,
			colorbar,
			colormap name=parula,
			colorbar style={overlay, width=0.15cm, yticklabel style={
					scaled ticks=false,
					/pgf/number format/.cd,
					fixed,
					fixed zerofill,
					/tikz/.cd}}
			]
			\addplot 
			graphics[xmin=0, xmax=6, ymin=0, ymax=8]{./images/strain/results_Serie58/image_1.png};
			\addplot[point meta min=-0.0054, point meta max=0.0847,opacity=0.5] 
			graphics[xmin=0, xmax=6, ymin=0, ymax=8]{./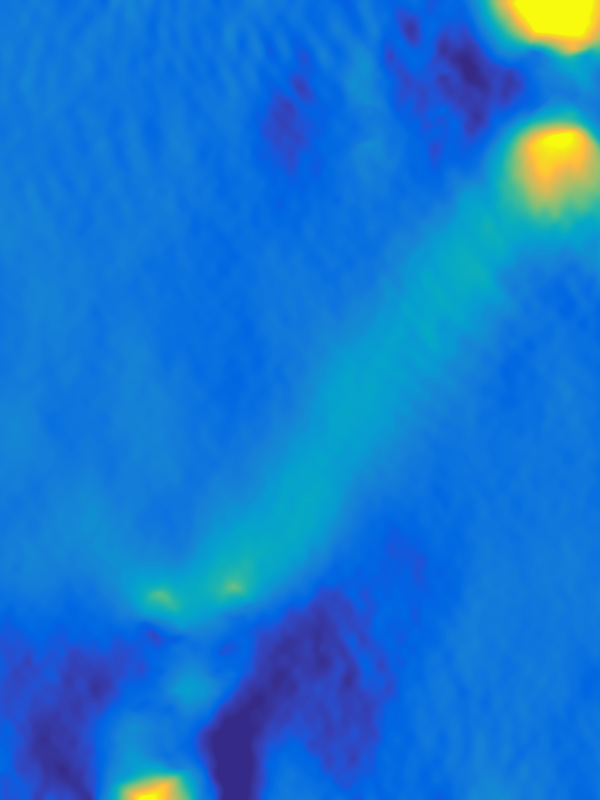};
			\end{axis}
			\end{tikzpicture} \hspace{0.5cm}
			&
			\begin{tikzpicture}\tikzstyle{every node}=[font=\tiny]
			\begin{axis}[width=.36\textwidth,
			enlargelimits=false, 
			hide axis,
			axis equal image,
			colorbar,
			scaled ticks=false,
			colormap name=parula,
			colorbar style={overlay, width=0.15cm, yticklabel style={
					scaled ticks=false,
					/pgf/number format/.cd,
					fixed,
					fixed zerofill,
					/tikz/.cd}}
			]
			\addplot 
			graphics[xmin=0, xmax=6, ymin=0, ymax=8]{./images/strain/results_Serie58/image_1.png};
			\addplot[point meta min=-0.0054, point meta max=0.0847,opacity=0.5] 
			graphics[xmin=0, xmax=6, ymin=0, ymax=8]{./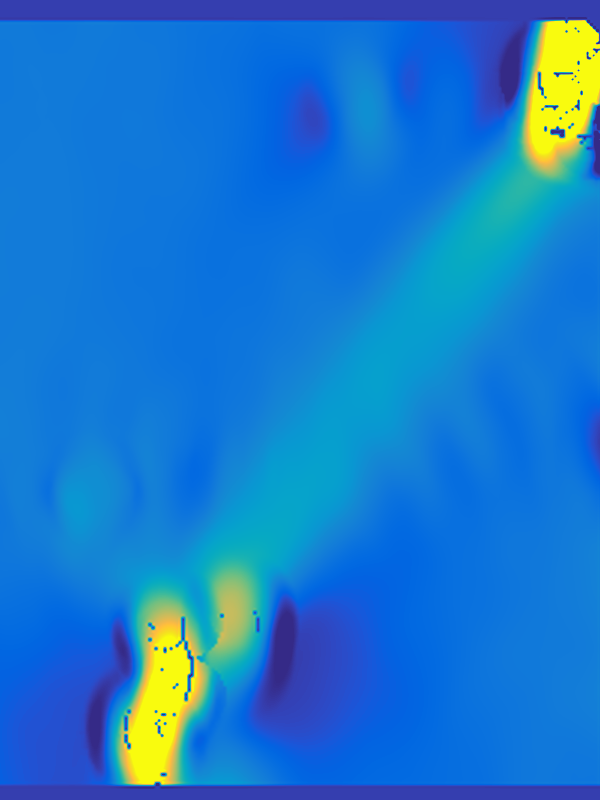};
			\end{axis}
			\end{tikzpicture} \hspace{0.5cm}
			\\ 			
			\begin{tikzpicture}\tikzstyle{every node}=[font=\tiny]
			\begin{axis}[width=.285\textwidth,
			enlargelimits=false, 
			hide axis,
			axis equal image
			]
			\addplot
			graphics[xmin=0, xmax=160, ymin=0, ymax=140]{./images/strain/results_Serie58/regionTGV_im1.png};
			\draw[draw=none,fill=white] (95,1) rectangle (159,40);
			\draw[{|[width=4pt]}-{|[width=4pt]},thick] (107,9) -- (147,9) node[above=-2pt,pos=0.5] {10\,\textmu m}; 
			\end{axis}
			\end{tikzpicture} 			
			&
			\begin{tikzpicture}\tikzstyle{every node}=[font=\tiny]
			\begin{axis}[width=.285\textwidth,
			enlargelimits=false, 
			hide axis,
			axis equal image
			]
			\addplot
			graphics[xmin=0, xmax=16, ymin=0, ymax=14]{./images/strain/results_Serie58/regionTGV_im1.png};
			\addplot[opacity=0.5]
			graphics[xmin=0, xmax=16, ymin=0, ymax=14]{./images/strain/results_Serie58/regionTGV_u.png};
			\end{axis}
			\end{tikzpicture}
			&
			\begin{tikzpicture}\tikzstyle{every node}=[font=\tiny]
			\begin{axis}[width=.285\textwidth,
			enlargelimits=false, 
			hide axis,
			axis equal image
			]
			\addplot
			graphics[xmin=0, xmax=16, ymin=0, ymax=14]{./images/strain/results_Serie58/regionTGV_im1.png};
			\addplot[opacity=0.5]
			graphics[xmin=0, xmax=16, ymin=0, ymax=14]{./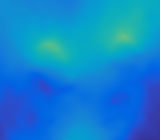};
			\end{axis}
			\end{tikzpicture} 
			&
			\begin{tikzpicture}\tikzstyle{every node}=[font=\tiny]
			\begin{axis}[width=.285\textwidth,
			enlargelimits=false, 
			hide axis,
			axis equal image
			]
			\addplot
			graphics[xmin=0, xmax=16, ymin=0, ymax=14]{./images/strain/results_Serie58/regionTGV_im1.png};
			\addplot[opacity=0.5]
			graphics[xmin=0, xmax=16, ymin=0, ymax=14]{./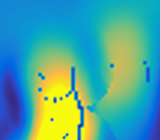};
			\end{axis}
			\end{tikzpicture} 
			\\[3ex] 			
			\begin{tikzpicture}\tikzstyle{every node}=[font=\tiny]
			\begin{axis}[width=.31\textwidth,
			enlargelimits=false, 
			hide axis,
			axis equal image
			]
			\addplot
			graphics[xmin=0, xmax=700, ymin=0, ymax=700]{./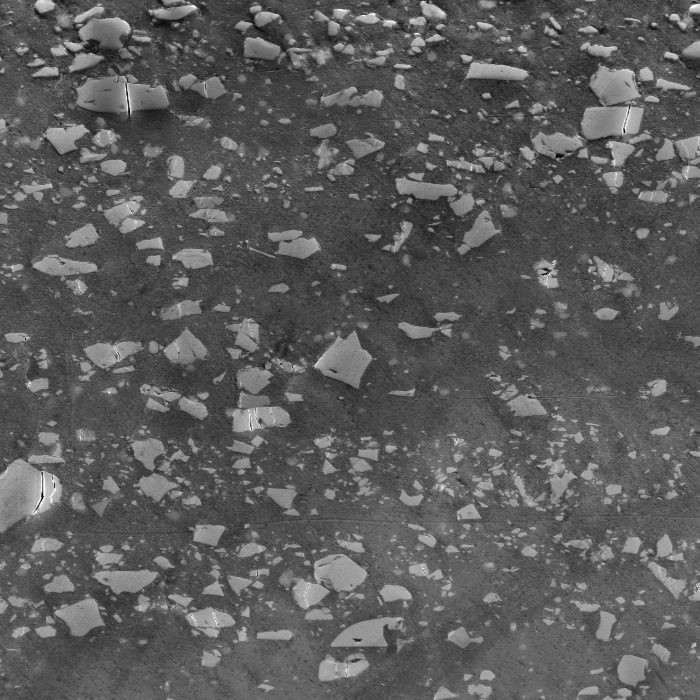};
			\draw[color=black,thick] (75,551) rectangle (174,650);
			\draw[draw=none,fill=white] (424,5) rectangle (694,160);
			\draw[{|[width=4pt]}-{|[width=4pt]},thick] (479,50) -- (639,50) node[above=-2pt,pos=0.5] {20\,\textmu m};
			\end{axis}
			\end{tikzpicture} 			
			&
			\begin{tikzpicture}\tikzstyle{every node}=[font=\tiny]
			\begin{axis}[width=.31\textwidth,
			enlargelimits=false, 
			hide axis,
			axis equal image,
			colorbar,
			colormap name=parula,
			colorbar style={overlay, width=0.15cm, yticklabel style={
					/pgf/number format/.cd,
					fixed,
					fixed zerofill,
					/tikz/.cd}}
			]
			\addplot 
			graphics[xmin=0, xmax=6, ymin=0, ymax=6]{./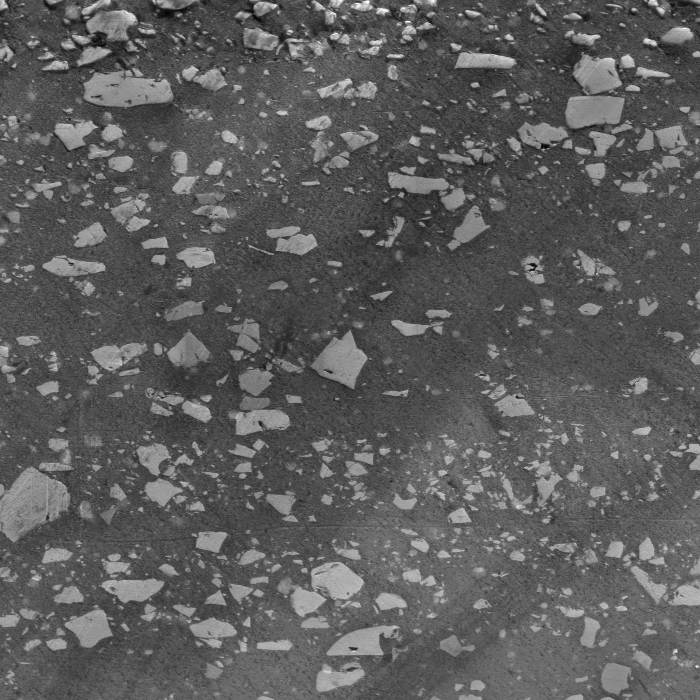};
			\addplot[point meta min=-0.015, point meta max=0.1, opacity=0.5] 
			graphics[xmin=0, xmax=6, ymin=0, ymax=6]{./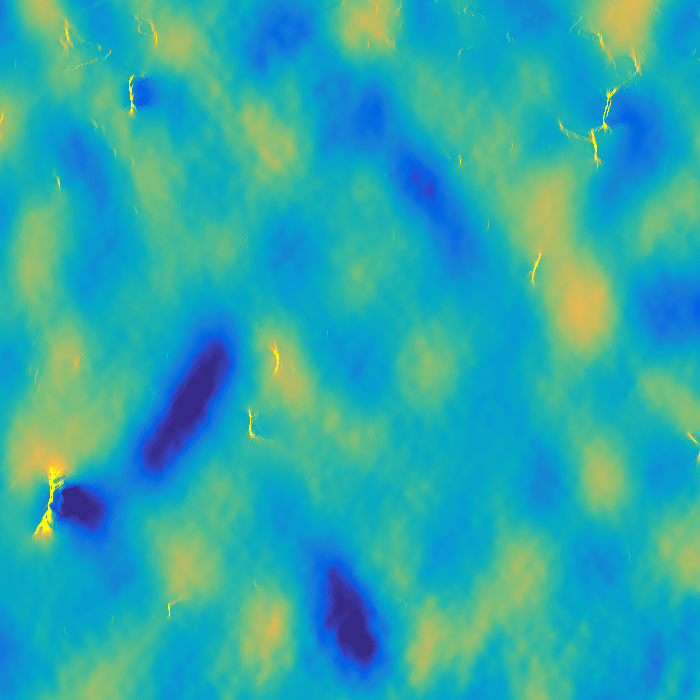};
			\end{axis}
			\end{tikzpicture} \hspace{0.5cm} 
			&
			\begin{tikzpicture}\tikzstyle{every node}=[font=\tiny]
			\begin{axis}[width=.31\textwidth,
			enlargelimits=false, 
			hide axis,
			axis equal image,
			colorbar,
			colormap name=parula,
			colorbar style={overlay, width=0.15cm, yticklabel style={
					scaled ticks=false,
					/pgf/number format/.cd,
					fixed,
					fixed zerofill,
					/tikz/.cd}}
			]
			\addplot 
			graphics[xmin=0, xmax=6, ymin=0, ymax=6]{./images/strain/results_Serie71/image_1.png};
			\addplot[point meta min=-0.01, point meta max=0.0756,opacity=0.5] 
			graphics[xmin=0, xmax=6, ymin=0, ymax=6]{./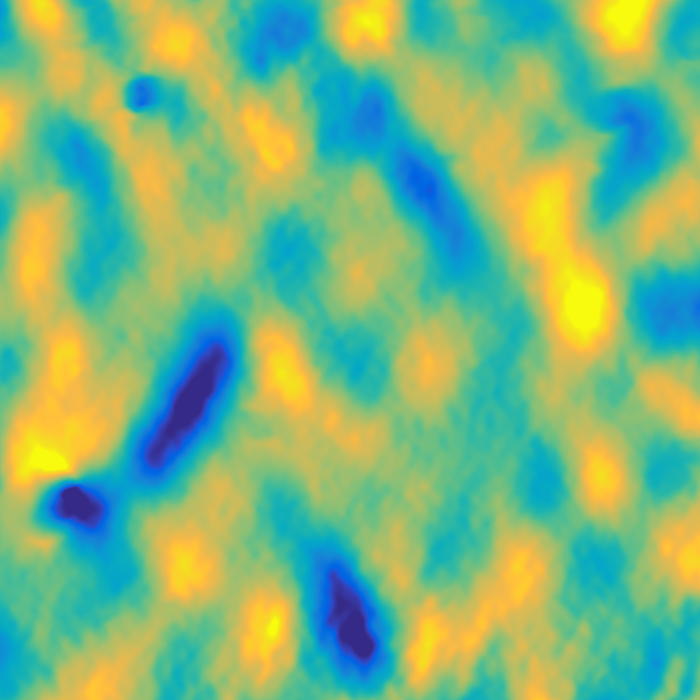};
			\end{axis}
			\end{tikzpicture} \hspace{0.5cm} 
			&
			\begin{tikzpicture}\tikzstyle{every node}=[font=\tiny]
			\begin{axis}[width=.31\textwidth,
			enlargelimits=false, 
			hide axis,
			axis equal image,
			colorbar,
			scaled ticks=false,
			colormap name=parula,
			colorbar style={overlay, width=0.15cm, yticklabel style={
					scaled ticks=false,
					/pgf/number format/.cd,
					fixed,
					fixed zerofill,
					/tikz/.cd}}
			]
			\addplot 
			graphics[xmin=0, xmax=6, ymin=0, ymax=6]{./images/strain/results_Serie71/image_1.png};
			\addplot[point meta min=-0.01, point meta max=0.0756,opacity=0.5] 
			graphics[xmin=0, xmax=6, ymin=0, ymax=6]{./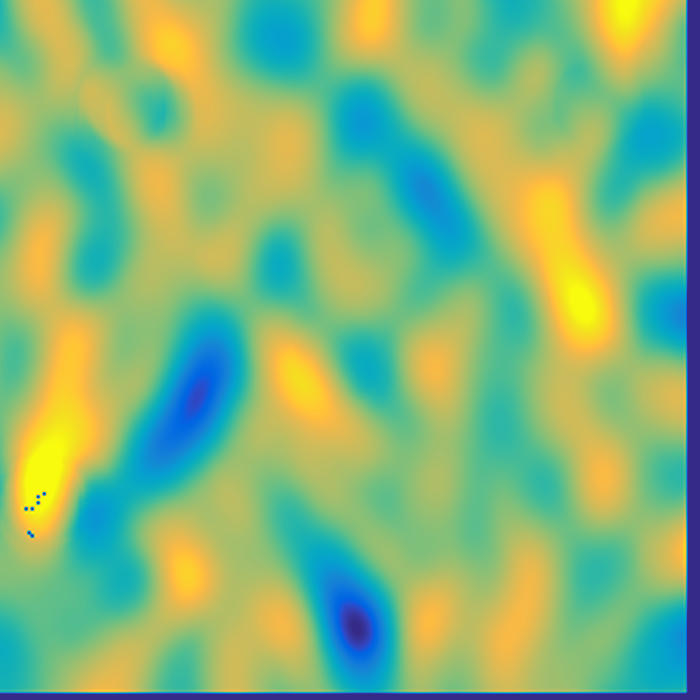};
			\end{axis}
			\end{tikzpicture} \hspace{0.5cm} 
			\\		
			\begin{tikzpicture}\tikzstyle{every node}=[font=\tiny]
			\begin{axis}[width=.31\textwidth,
			enlargelimits=false, 
			hide axis,
			axis equal image
			]
			\addplot
			graphics[xmin=0, xmax=100, ymin=0, ymax=100]{./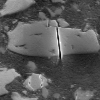};
			\draw[draw=none,fill=white] (51,1) rectangle (99,25);
			\draw[{|[width=4pt]}-{|[width=4pt]},thick] (55,9) -- (95,9) node[above=-2pt,pos=0.5] {5\,\textmu m}; 
			\end{axis}
			\end{tikzpicture} 			
			&
			\begin{tikzpicture}\tikzstyle{every node}=[font=\tiny]
			\begin{axis}[width=.31\textwidth,
			enlargelimits=false, 
			hide axis,
			axis equal image
			]
			\addplot
			graphics[xmin=0, xmax=16, ymin=0, ymax=16]{./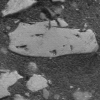};
			\addplot[opacity=0.5]
			graphics[xmin=0, xmax=16, ymin=0, ymax=16]{./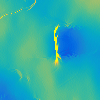};
			\end{axis}
			\end{tikzpicture} 
			&
			\begin{tikzpicture}\tikzstyle{every node}=[font=\tiny]
			\begin{axis}[width=.31\textwidth,
			enlargelimits=false, 
			hide axis,
			axis equal image
			]
			\addplot
			graphics[xmin=0, xmax=16, ymin=0, ymax=16]{./images/strain/results_Serie71/regionTGV_im0.png};
			\addplot[opacity=0.5]
			graphics[xmin=0, xmax=16, ymin=0, ymax=16]{./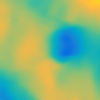};
			\end{axis}
			\end{tikzpicture} 
			&
			\begin{tikzpicture}\tikzstyle{every node}=[font=\tiny]
			\begin{axis}[width=.31\textwidth,
			enlargelimits=false, 
			hide axis,
			axis equal image
			]
			\addplot
			graphics[xmin=0, xmax=16, ymin=0, ymax=16]{./images/strain/results_Serie71/regionTGV_im0.png};
			\addplot[opacity=0.5]
			graphics[xmin=0, xmax=16, ymin=0, ymax=16]{./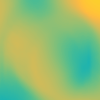};
			\end{axis}
			\end{tikzpicture} 	 
			\\[3ex] 			
			\begin{tikzpicture}\tikzstyle{every node}=[font=\tiny]
			\begin{axis}[width=.284\textwidth,
			enlargelimits=false, 
			hide axis,
			axis equal image
			]
			\addplot
			graphics[xmin=0, xmax=650, ymin=0, ymax=550]{./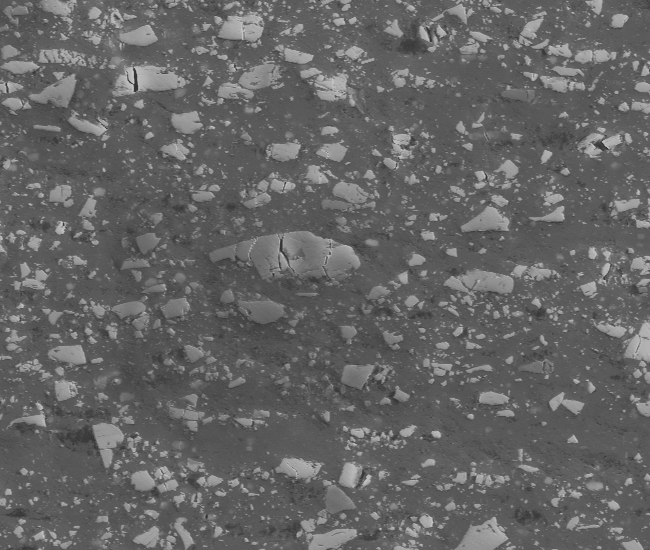};
			\draw[color=black,thick] (451,321) rectangle (530,400);
			\draw[draw=none,fill=white] (374,5) rectangle (644,160);
			\draw[{|[width=4pt]}-{|[width=4pt]},thick] (429,50) -- (589,50) node[above=-2pt,pos=0.5] {20\,\textmu m};
			\end{axis}
			\end{tikzpicture}			
			&
			\begin{tikzpicture}\tikzstyle{every node}=[font=\tiny]
			\begin{axis}[width=.284\textwidth,
			enlargelimits=false, 
			hide axis,
			axis equal image,
			colorbar,
			colormap name=parula,
			colorbar style={overlay, width=0.15cm, yticklabel style={
					/pgf/number format/.cd,
					fixed,
					fixed zerofill,
					/tikz/.cd}}
			]
			\addplot 
			graphics[xmin=0, xmax=650, ymin=0, ymax=550]{./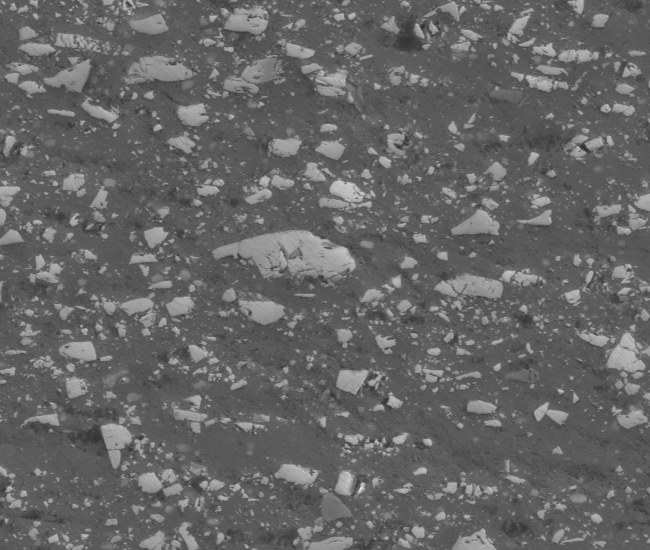};
			\addplot[point meta min=0.0163, point meta max=0.4, opacity=0.5] 
			graphics[xmin=0, xmax=650, ymin=0, ymax=550]{./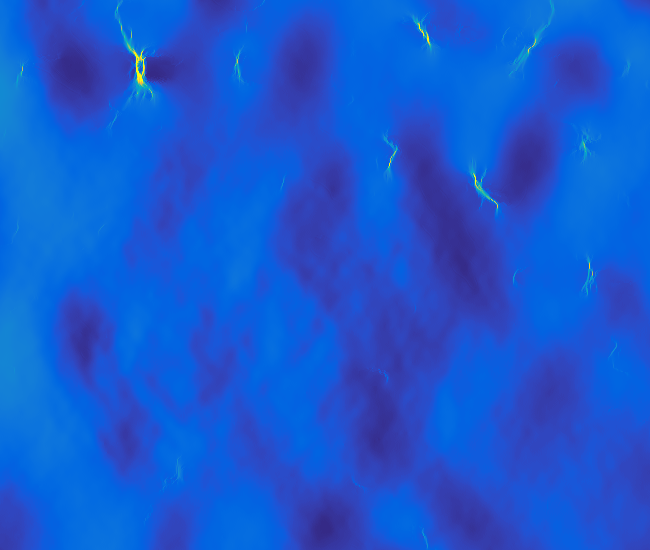};
			\end{axis}
			\end{tikzpicture} \hspace{0.5cm} 
			&
			\begin{tikzpicture}\tikzstyle{every node}=[font=\tiny]
			\begin{axis}[width=.284\textwidth,
			enlargelimits=false, 
			hide axis,
			axis equal image,
			colorbar,
			colormap name=parula,
			colorbar style={overlay, width=0.15cm, yticklabel style={
					scaled ticks=false,
					/pgf/number format/.cd,
					fixed,
					fixed zerofill,
					/tikz/.cd}}
			]
			\addplot 
			graphics[xmin=0, xmax=650, ymin=0, ymax=550]{./images/strain/results_Serie1/image_1.png};
			\addplot[point meta min=0.0167, point meta max=0.1225,opacity=0.5] 
			graphics[xmin=0, xmax=650, ymin=0, ymax=550]{./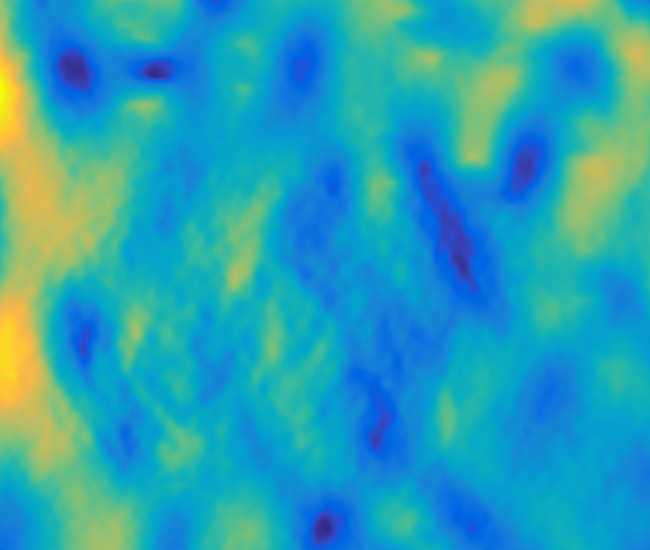};
			\end{axis}
			\end{tikzpicture} \hspace{0.5cm} 
			&
			\begin{tikzpicture}\tikzstyle{every node}=[font=\tiny]
			\begin{axis}[width=.284\textwidth,
			enlargelimits=false, 
			hide axis,
			axis equal image,
			colorbar,
			scaled ticks=false,
			colormap name=parula,
			colorbar style={overlay, width=0.15cm, yticklabel style={
					scaled ticks=false,
					/pgf/number format/.cd,
					fixed,
					fixed zerofill,
					/tikz/.cd}}
			]
			\addplot 
			graphics[xmin=0, xmax=650, ymin=0, ymax=550]{./images/strain/results_Serie1/image_1.png};
			\addplot[point meta min=0.0167, point meta max=0.1225,opacity=0.5] 
			graphics[xmin=0, xmax=650, ymin=0, ymax=550]{./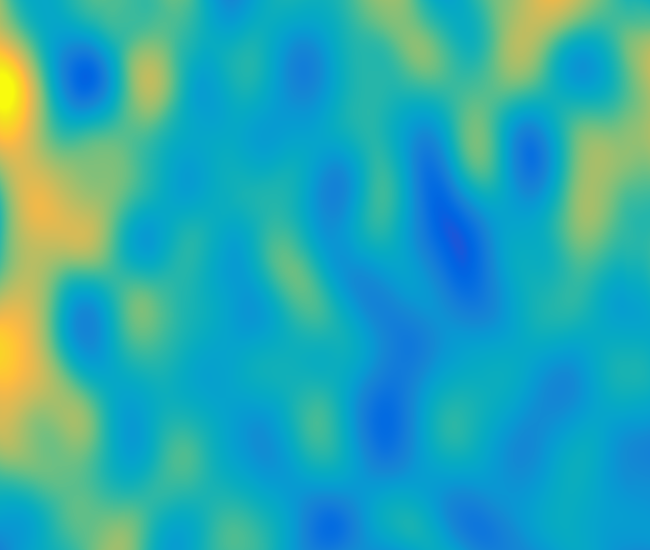};
			\end{axis}
			\end{tikzpicture} \hspace{0.5cm} 
			\\		
			\begin{tikzpicture}\tikzstyle{every node}=[font=\tiny]
			\begin{axis}[width=.31\textwidth,
			enlargelimits=false, 
			hide axis,
			axis equal image
			]
			\addplot
			graphics[xmin=0, xmax=80, ymin=0, ymax=80]{./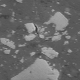};
			\draw[draw=none,fill=white] (50.5,1.5) rectangle (78.5,19);
			\draw[{|[width=4pt]}-{|[width=4pt]},thick] (56.5,6) -- (72.5,6) node[above=-2pt,pos=0.5] {2\,\textmu m};
			\end{axis}
			\end{tikzpicture} 			
			&
			\begin{tikzpicture}\tikzstyle{every node}=[font=\tiny]
			\begin{axis}[width=.31\textwidth,
			enlargelimits=false, 
			hide axis,
			axis equal image
			]
			\addplot
			graphics[xmin=0, xmax=16, ymin=0, ymax=16]{./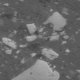};
			\addplot[opacity=0.5]
			graphics[xmin=0, xmax=16, ymin=0, ymax=16]{./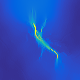};
			\end{axis}
			\end{tikzpicture} 
			&
			\begin{tikzpicture}\tikzstyle{every node}=[font=\tiny]
			\begin{axis}[width=.31\textwidth,
			enlargelimits=false, 
			hide axis,
			axis equal image
			]
			\addplot
			graphics[xmin=0, xmax=16, ymin=0, ymax=16]{./images/strain/results_Serie1/regionTGV_im0.png};
			\addplot[opacity=0.5]
			graphics[xmin=0, xmax=16, ymin=0, ymax=16]{./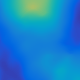};
			\end{axis}
			\end{tikzpicture} 
			&
			\begin{tikzpicture}\tikzstyle{every node}=[font=\tiny]
			\begin{axis}[width=.31\textwidth,
			enlargelimits=false, 
			hide axis,
			axis equal image
			]
			\addplot
			graphics[xmin=0, xmax=16, ymin=0, ymax=16]{./images/strain/results_Serie1/regionTGV_im0.png};
			\addplot[opacity=0.5]
			graphics[xmin=0, xmax=16, ymin=0, ymax=16]{./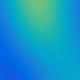};
			\end{axis}
			\end{tikzpicture} 		
		\end{tabular}
		\caption{ \label{fig:tgv_ncorr}
			Comparison of the proposed model to the correlation based method NCorr for three different image pairs and enlarged regions.} 
	\end{figure}	
	
	The second and third example show more complicated deformations.
	In addition to $\nabla_x u_1$, we depict the smooth strain part $a_{1}$, that is split off by the TGV regularizer.
	It is visible that the result of NCorr looks very similar to $a_{1}$, in particular the same peaks and structures in a degree of $\pm \frac{\pi}{4}$ can be observed.
	But in addition to this smooth part, which resembles the strain in the classical meaning in materials science, our method is able to resolve also the local damage.
	
	In summary, the proposed method is on the one hand able to extract the information that also commercial software extracts.
	This warrants that the model in general gives correct results from the viewpoint of materials science.
	On the other hand, it is possible to visualize the local behavior with a very high resolution where correlation-based methods fail.
	
	\subsection{Results for Other Materials and Experimental Settings} \label{sub:else}
	Finally, we leave the setting of tensile tests 
	and show some results for other experimental settings and materials.
	
	\textbf{Figure~\ref{fig:compress}} contains results for a compression test of a fiber reinforced aluminum matrix where load was applied in $x$-direction. 
	Since the material is compressed in $x$-direction, cracks are expected to open up in $y$-direction.
	Hence, we depict also $u_2$ and $\nabla_y u_2$ since these are more suitable for this setting.
	Also in this case, the proposed method provides reasonable results and resolves cracks.
	
	\begin{figure} \centering
		\begin{subfigure}[b]{0.99\textwidth}\centering
			\begin{tabular}{cc}
				\multicolumn{2}{c}{uncompressed state}
				\\
				\begin{tikzpicture}\tikzstyle{every node}=[font=\tiny] 
				\begin{axis}[ 
				height=.325\textwidth, 
				enlargelimits=false, 
				hide axis, 
				axis equal image, 
				axis on top
				]
				\addplot[point meta min=-4.88,
				point meta max=6.83]
				graphics[xmin=0, xmax=500, ymin=0, ymax=500] 
				{./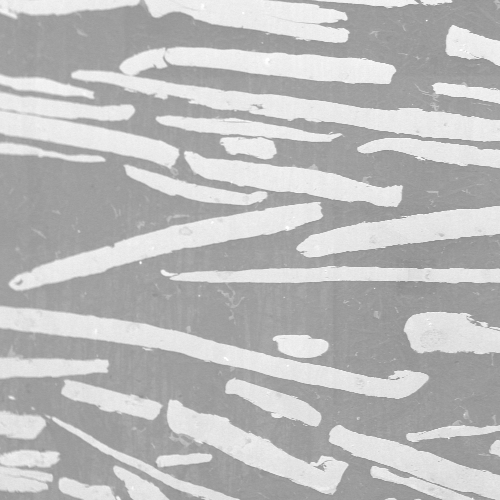}; 
				\draw[color=black,thick] (91,460) rectangle (390,290);
				\draw[draw=none,fill=white] (304,5) rectangle (494,85);
				\draw[{|[width=4pt]}-{|[width=4pt]},thick] (339,25) -- (459,25) node[above=-2pt,pos=0.5] {200\,\textmu m};
				\end{axis} 
				\end{tikzpicture}
				&				
				\begin{tikzpicture}\tikzstyle{every node}=[font=\tiny] 
				\begin{axis}[ 
				height=.3\textwidth, 
				enlargelimits=false, 
				xlabel={x}, 
				ylabel={y}, 
				scale only axis, 
				hide axis, 
				axis equal image, 
				axis on top
				] 
				\addplot[surf,point meta min=-5.1702, point meta max=10.5975]  
				graphics[xmin=0, xmax=300, ymin=0, ymax=170] 
				{./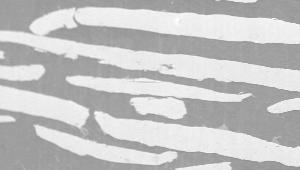}; 
				\draw[draw=none,fill=white] (228,2) rectangle (298,28);
				\draw[{|[width=4pt]}-{|[width=4pt]},thick] (233,8) -- (293,8) node[above=-2pt,pos=0.5] {100\,\textmu m};
				\end{axis} 
				\end{tikzpicture}
				\\[2ex]
				\multicolumn{2}{c}{compressed state}
				\\
				\begin{tikzpicture}\tikzstyle{every node}=[font=\tiny] 
				\begin{axis}[ 
				height=.415\textwidth, 
				enlargelimits=false, 
				xmin=-200,
				xmax=700,
				hide axis, 
				axis equal image, 
				axis on top
				]
				\addplot
				graphics[xmin=0, xmax=500, ymin=0, ymax=500] 
				{./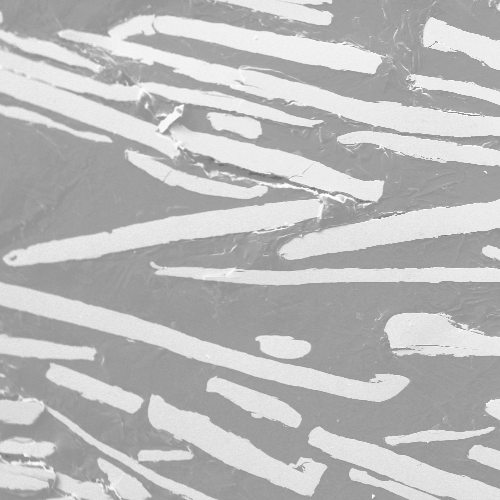}; 
				\draw[->,very thick] (-160,250) -- (-10,250) node[above=-2pt,pos=0.5] {\large $F$};
				\draw[->,very thick] (660,250) -- (510,250) node[above=-2pt,pos=0.5] {\large $F$};
				\draw[color=black,thick] (91,460) rectangle (390,290);
				\draw[draw=none,fill=white] (304,5) rectangle (494,85);
				\draw[{|[width=4pt]}-{|[width=4pt]},thick] (339,25) -- (459,25) node[above=-2pt,pos=0.5] {200\,\textmu m};
				\end{axis} 
				\end{tikzpicture}
				&				
				\begin{tikzpicture}\tikzstyle{every node}=[font=\tiny] 
				\begin{axis}[ 
				height=.42\textwidth, 
				enlargelimits=false,
				xmin=-60,
				xmax=360,
				xlabel={x}, 
				ylabel={y}, 
				scale only axis, 
				hide axis, 
				axis equal image, 
				axis on top
				] 
				\addplot[surf,point meta min=-5.1702, point meta max=10.5975]  
				graphics[xmin=0, xmax=300, ymin=0, ymax=170] 
				{./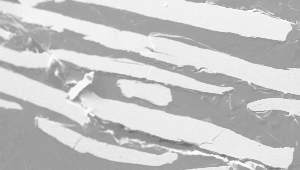};
				\draw[->,very thick] (-60,85) -- (-5,85) node[above=-2pt,pos=0.5] {\large $F$}; 
				\draw[->,very thick] (360,85) -- (305,85) node[above=-2pt,pos=0.5] {\large $F$};
				\draw[draw=none,fill=white] (228,2) rectangle (298,28);
				\draw[{|[width=4pt]}-{|[width=4pt]},thick] (233,8) -- (293,8) node[above=-2pt,pos=0.5] {100\,\textmu m};
				\end{axis} 
				\end{tikzpicture}
			\end{tabular}
			\caption{Microstructure images of the uncompressed and compressed state.}
		\end{subfigure}
		\\[2ex]
		\begin{subfigure}[b]{0.99\textwidth}\centering
			\begin{tabular}{ccc}
				$u_1$
				&		
				\raisebox{-.5\height}{
					\begin{tikzpicture}\tikzstyle{every node}=[font=\tiny] 
					\begin{axis}[ 
					height=.325\textwidth, 
					enlargelimits=false, 
					hide axis, 
					axis equal image, 
					axis on top
					]
					\addplot[point meta min=-4.88,
					point meta max=6.83]
					graphics[xmin=0, xmax=500, ymin=0, ymax=500] 
					{./images/strain/results_KIT3/image_1.png}; 
					\addplot[opacity=0.5]  
					graphics[xmin=0, xmax=500, ymin=0, ymax=500] 
					{./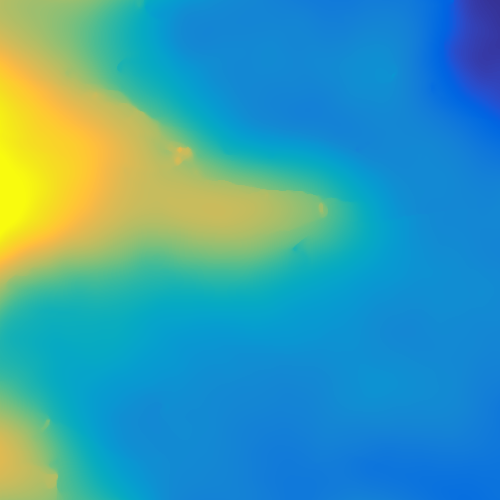}; 
					\end{axis} 
					\end{tikzpicture}}
				&				
				\raisebox{-.5\height}{
					\begin{tikzpicture}\tikzstyle{every node}=[font=\tiny] 
					\begin{axis}[ 
					height=.3\textwidth, 
					enlargelimits=false, 
					xlabel={x}, 
					ylabel={y}, 
					scale only axis, 
					hide axis, 
					axis equal image, 
					axis on top, 
					colorbar, 
					scaled ticks=false, 
					colormap name=parula, 
					colorbar right, 
					colorbar style={ 
						overlay,width=0.15cm
					}
					] 
					\addplot[surf,point meta min=-8.05, point meta max=11.27]  
					graphics[xmin=0, xmax=300, ymin=0, ymax=170] 
					{./images/strain/results_KIT3/regionu2_im0.png}; 
					\addplot[surf,point meta min=-5.1702, point meta max=10.5975,opacity=0.5]  
					graphics[xmin=0, xmax=300, ymin=0, ymax=170] 
					{./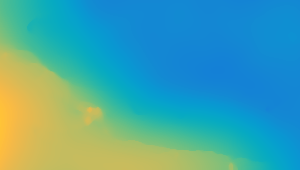}; 
					\end{axis} 
					\end{tikzpicture}}
				\\[9ex]
				$u_2$
				&		
				\raisebox{-.5\height}{
					\begin{tikzpicture}\tikzstyle{every node}=[font=\tiny] 
					\begin{axis}[ 
					height=.325\textwidth, 
					enlargelimits=false, 
					hide axis, 
					axis equal image, 
					axis on top
					]
					\addplot
					graphics[xmin=0, xmax=500, ymin=0, ymax=500] 
					{./images/strain/results_KIT3/image_1.png}; 
					\addplot[opacity=0.5]  
					graphics[xmin=0, xmax=500, ymin=0, ymax=500] 
					{./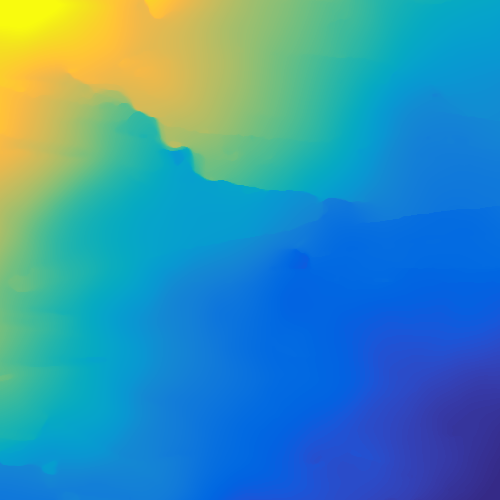}; 
					\end{axis} 
					\end{tikzpicture}}
				&				
				\raisebox{-.5\height}{
					\begin{tikzpicture}\tikzstyle{every node}=[font=\tiny] 
					\begin{axis}[ 
					height=.3\textwidth, 
					enlargelimits=false, 
					xlabel={x}, 
					ylabel={y}, 
					scale only axis, 
					hide axis, 
					axis equal image, 
					axis on top, 
					colorbar, 
					scaled ticks=false, 
					colormap name=parula, 
					colorbar right, 
					colorbar style={ 
						overlay,width=0.15cm
					}
					]  
					\addplot[surf,point meta min=-2.38, point meta max=23.64]  
					graphics[xmin=0, xmax=300, ymin=0, ymax=170] 
					{./images/strain/results_KIT3/regionu2_im0.png}; 
					\addplot[surf,point meta min=-5.1702, point meta max=10.5975,opacity=0.5]  
					graphics[xmin=0, xmax=300, ymin=0, ymax=170] 
					{./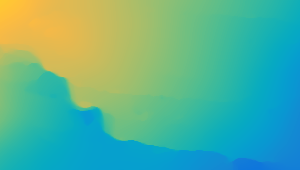}; 
					\end{axis} 
					\end{tikzpicture}}
				\\[9ex]
				$\nabla_x u_1$
				&		
				\raisebox{-.5\height}{
					\begin{tikzpicture}\tikzstyle{every node}=[font=\tiny] 
					\begin{axis}[ 
					height=.325\textwidth, 
					enlargelimits=false, 
					hide axis, 
					axis equal image, 
					axis on top
					]
					\addplot[point meta min=-4.88,
					point meta max=6.83]
					graphics[xmin=0, xmax=500, ymin=0, ymax=500] 
					{./images/strain/results_KIT3/image_1.png}; 
					\addplot[opacity=0.5]  
					graphics[xmin=0, xmax=500, ymin=0, ymax=500] 
					{./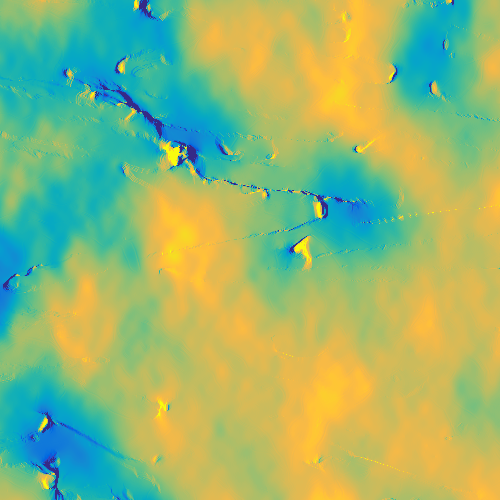}; 
					\end{axis} 
					\end{tikzpicture}} 
				&				
				\raisebox{-.5\height}{
					\begin{tikzpicture}\tikzstyle{every node}=[font=\tiny] 
					\begin{axis}[ 
					height=.3\textwidth, 
					enlargelimits=false, 
					xlabel={x}, 
					ylabel={y}, 
					scale only axis, 
					hide axis, 
					axis equal image, 
					axis on top, 
					colorbar, 
					scaled ticks=false, 
					colormap name=parula, 
					colorbar right, 
					colorbar style={overlay, width=0.15cm, yticklabel style={
							scaled ticks=false,
							/pgf/number format/.cd,
							fixed,
							fixed zerofill,
							/tikz/.cd}}
					] 
					\addplot[surf,point meta min=-0.0765, point meta max=0.019]  
					graphics[xmin=0, xmax=300, ymin=0, ymax=170] 
					{./images/strain/results_KIT3/regionu2_im0.png}; 
					\addplot[opacity=0.5]  
					graphics[xmin=0, xmax=300, ymin=0, ymax=170] 
					{./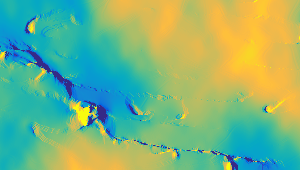}; 
					\end{axis} 
					\end{tikzpicture}}
				\\[9ex]
				$\nabla_y u_2$
				&		
				\raisebox{-.5\height}{
					\begin{tikzpicture}\tikzstyle{every node}=[font=\tiny] 
					\begin{axis}[ 
					height=.325\textwidth, 
					enlargelimits=false, 
					hide axis, 
					axis equal image, 
					axis on top
					]
					\addplot
					graphics[xmin=0, xmax=500, ymin=0, ymax=500] 
					{./images/strain/results_KIT3/image_1.png}; 
					\addplot[opacity=0.5]  
					graphics[xmin=0, xmax=500, ymin=0, ymax=500] 
					{./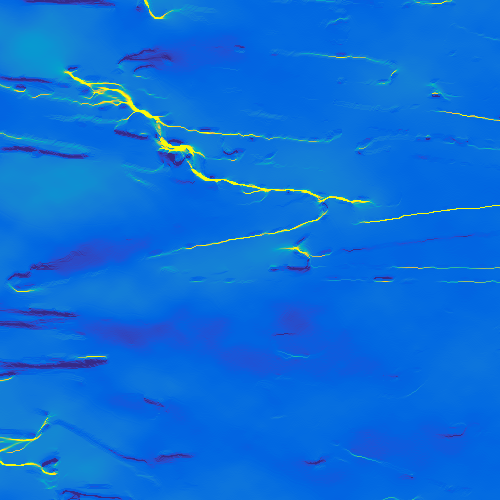}; 
					\end{axis} 
					\end{tikzpicture}} 
				&				
				\raisebox{-.5\height}{
					\begin{tikzpicture}\tikzstyle{every node}=[font=\tiny] 
					\begin{axis}[ 
					height=.3\textwidth, 
					enlargelimits=false, 
					xlabel={x}, 
					ylabel={y}, 
					scale only axis, 
					hide axis, 
					axis equal image, 
					axis on top, 
					colorbar, 
					scaled ticks=false, 
					colormap name=parula, 
					colorbar right, 
					colorbar style={ 
						overlay,width=0.15cm
					}
					] 
					\addplot[surf,point meta min=-0.037, point meta max=0.2592]  
					graphics[xmin=0, xmax=300, ymin=0, ymax=170] 
					{./images/strain/results_KIT3/regionu2_im0.png}; 
					\addplot[opacity=0.5]  
					graphics[xmin=0, xmax=300, ymin=0, ymax=170] 
					{./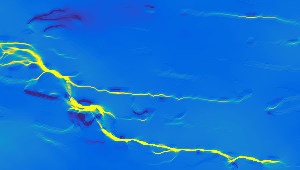}; 
					\end{axis} 
					\end{tikzpicture}}
			\end{tabular}
			\caption{Displacement $u_1$, $u_2$ in \textmu m ($10$\,\textmu m $= 6$\,px) and the strain $\nabla_x u_1$, $\nabla_y u_2$.}
		\end{subfigure}
		\caption{ \label{fig:compress}
			Results for a compression test of a fiber reinforced aluminum matrix.} 
	\end{figure}
	
	\textbf{Figure~\ref{fig:fatigue}} shows the results for a fatigue test.
	Here, we depict the images and results for one load cycle illustrated in Figure~\ref{fig:fatigue:plot}.
	The first image shows the specimen in state ``A'' without load $\sigma_A=0$\,MPa, then load is applied, i.e., state ``B'' with $\sigma_B=632$\,MPa.
	For the third image, i.e.~state ``C'', the load is released again, i.e., $\sigma_C=0$\,MPa and then the specimen is compressed to state ``D'' with $\sigma_D=-630$\,MPa. For the last image in state ``E'', the load is released again, i.e., $\sigma_E=0$\,MPa.
	We use the image of state ``A'' as the reference image.
	The computed results are shown in Figure~\ref{fig:fatigue:res}.
	Similar to the previous examples of tensile tests, we see that the strain 
	is positive almost everywhere for the image under load and we see exactly where cracks open up.
	For state ``C'', which corresponds to the state after releasing the tension, the strain is mainly positive as well since only the elastic deformation reverses.
	For the image under compression, the displacement $u_1$ slightly decreases from left to right and the strain is mainly negative.
	In the last state ``E'' after completing the load cycle,
	the specimen is slightly more strained than in the initial state ``A''.
	In particular, the crack in the particle opened up a bit.
	This is visible in the strain component $\nabla_x u_1$.
	Also the stress-strain curve in Figure~\ref{fig:fatigue:plot} shows a positive strain for state ``E''.		
	
	\begin{figure} \centering
		\begin{subfigure}[b]{0.99\textwidth}\centering	
			\raisebox{-.5\height}{	
				\includegraphics[width=0.55\textwidth]{./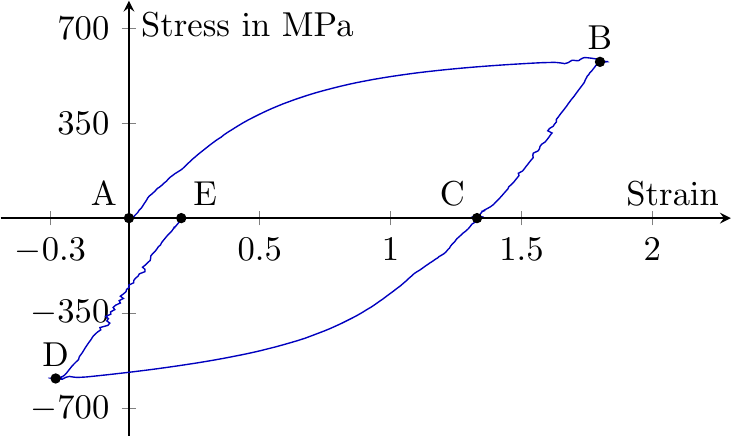}
			}
			\begin{tabular}{c}
				State ``A''
				\\
				\begin{tikzpicture}\tikzstyle{every node}=[font=\tiny] 
				\begin{axis}[ 
				height=.325\textwidth, 
				enlargelimits=false, 
				hide axis, 
				axis equal image, 
				axis on top
				]
				\addplot[point meta min=-4.88,
				point meta max=6.83]
				graphics[xmin=0, xmax=600, ymin=0, ymax=600] 
				{./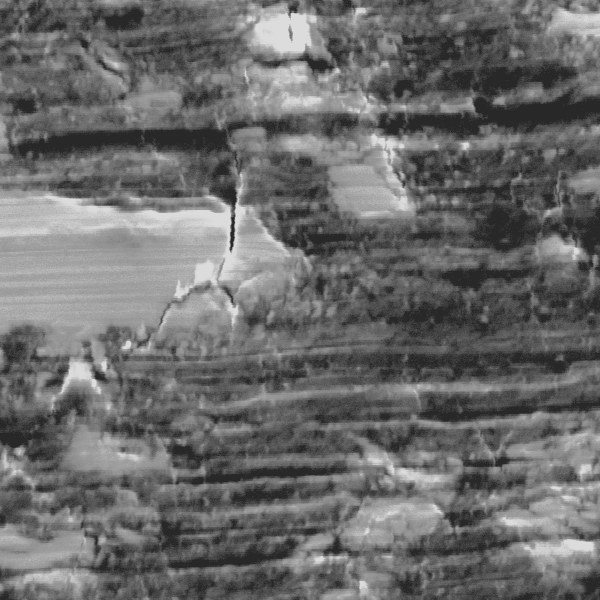};
				\draw[draw=none,fill=white] (404,5) rectangle (594,105);
				\draw[{|[width=4pt]}-{|[width=4pt]},thick] (419,25) -- (579,25) node[above=-2pt,pos=0.5] {5\,\textmu m};
				\end{axis} 
				\end{tikzpicture}
			\end{tabular}
			\caption[]{Hysteresis curve of one cycle of a fatigue experiment and microstructure image of the initial state ``A''.}\label{fig:fatigue:plot}
		\end{subfigure}\\[1ex]
		\begin{subfigure}[b]{0.99\textwidth}\centering
			\setlength{\tabcolsep}{3pt}
			\begin{tabular}{ccccc}
				& State ``B'' & State ``C'' & State ``D'' &	State ``E'' \qquad \qquad \\
				&	
				\raisebox{-.5\height}{
					\begin{tikzpicture}\tikzstyle{every node}=[font=\tiny] 
					\begin{axis}[ 
					width=.345\textwidth, 
					enlargelimits=false, 
					hide axis, 
					axis equal image, 
					axis on top, 
					scaled ticks=false
					] 
					\addplot
					graphics[xmin=0, xmax=600, ymin=0, ymax=600] 
					{./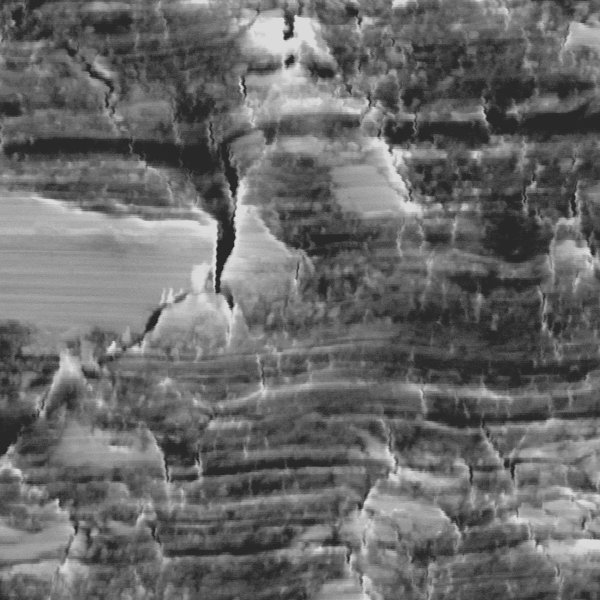}; 
					\draw[draw=none,fill=white] (404,5) rectangle (594,110);
					\draw[{|[width=4pt]}-{|[width=4pt]},thick] (419,30) -- (579,30) node[above=-2pt,pos=0.5] {5\,\textmu m};
					\end{axis} 
					\end{tikzpicture}} 
				&	
				\raisebox{-.5\height}{
					\begin{tikzpicture}\tikzstyle{every node}=[font=\tiny] 
					\begin{axis}[ 
					width=.345\textwidth, 
					enlargelimits=false, 
					hide axis, 
					axis equal image, 
					axis on top, 
					scaled ticks=false
					] 
					\addplot
					graphics[xmin=0, xmax=600, ymin=0, ymax=600] 
					{./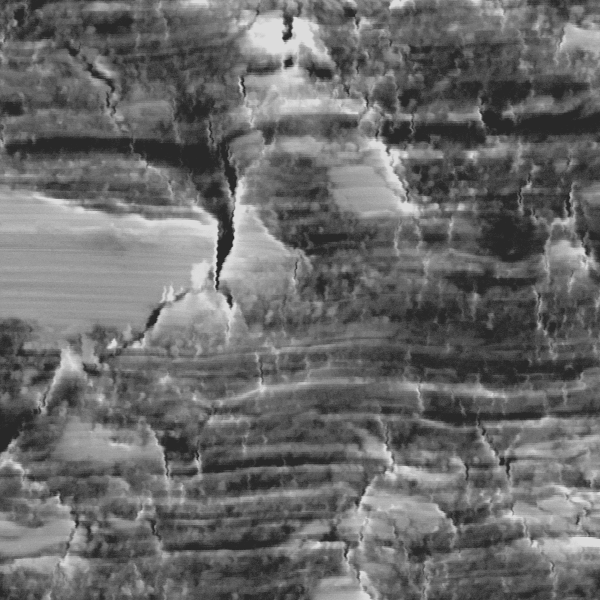};
					\draw[draw=none,fill=white] (404,5) rectangle (594,110);
					\draw[{|[width=4pt]}-{|[width=4pt]},thick] (419,30) -- (579,30) node[above=-2pt,pos=0.5] {5\,\textmu m};
					\end{axis} 
					\end{tikzpicture}}
				&	
				\raisebox{-.5\height}{
					\begin{tikzpicture}\tikzstyle{every node}=[font=\tiny] 
					\begin{axis}[ 
					width=.345\textwidth, 
					enlargelimits=false, 
					hide axis, 
					axis equal image, 
					axis on top, 
					scaled ticks=false
					] 
					\addplot
					graphics[xmin=0, xmax=600, ymin=0, ymax=600] 
					{./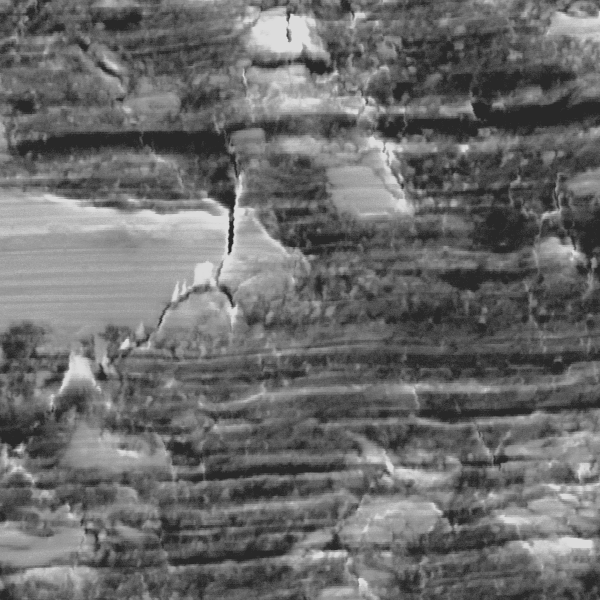}; 
					\draw[draw=none,fill=white] (404,5) rectangle (594,110);
					\draw[{|[width=4pt]}-{|[width=4pt]},thick] (419,30) -- (579,30) node[above=-2pt,pos=0.5] {5\,\textmu m};
					\end{axis} 
					\end{tikzpicture}}
				&
				\raisebox{-.5\height}{	
					\begin{tikzpicture}\tikzstyle{every node}=[font=\tiny] 
					\begin{axis}[ 
					width=.345\textwidth, 
					enlargelimits=false, 
					hide axis, 
					axis equal image, 
					axis on top, 
					scaled ticks=false
					] 
					\addplot
					graphics[xmin=0, xmax=600, ymin=0, ymax=600] 
					{./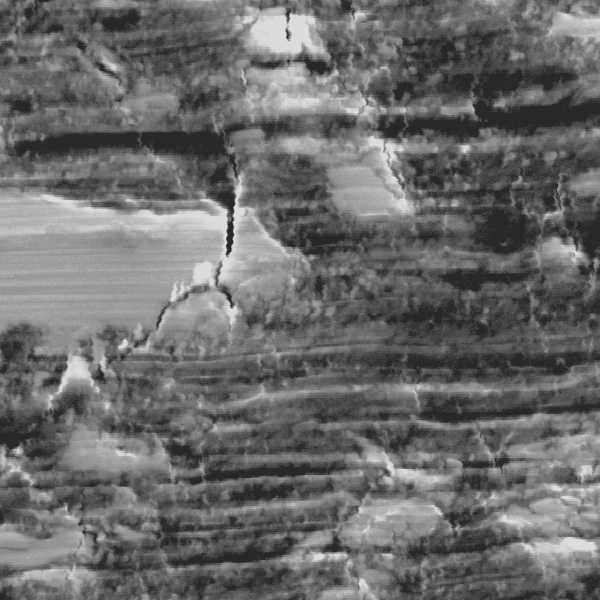}; 
					\draw[draw=none,fill=white] (404,5) rectangle (594,110);
					\draw[{|[width=4pt]}-{|[width=4pt]},thick] (419,30) -- (579,30) node[above=-2pt,pos=0.5] {5\,\textmu m};
					\end{axis} 
					\end{tikzpicture}}
				\hspace{0.8cm}
				\\[8ex]
				$u_1$
				&
				\raisebox{-.5\height}{
					\begin{tikzpicture}\tikzstyle{every node}=[font=\tiny] 
					\begin{axis}[ 
					width=.345\textwidth, 
					enlargelimits=false, 
					hide axis, 
					axis equal image, 
					axis on top
					]
					\addplot
					graphics[xmin=0, xmax=600, ymin=0, ymax=600] 
					{./images/strain/results_fatigue_new/image_1.png}; 
					\addplot[opacity=0.5]  
					graphics[xmin=0, xmax=600, ymin=0, ymax=600] 
					{./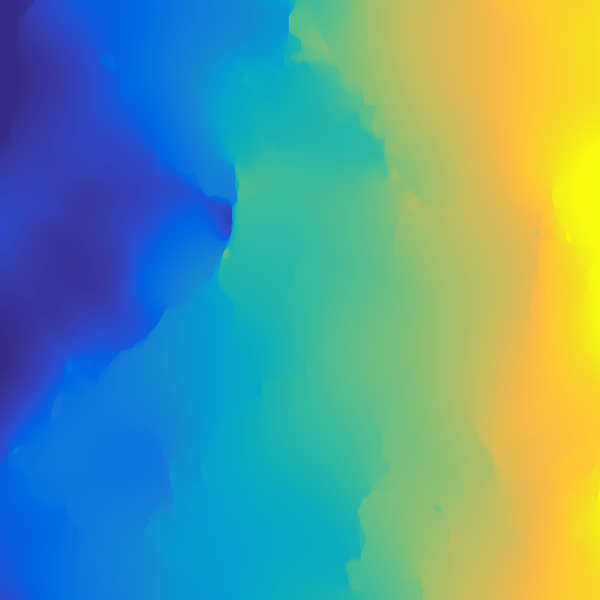}; 
					\end{axis} 
					\end{tikzpicture}}
				&
				\raisebox{-.5\height}{
					\begin{tikzpicture}\tikzstyle{every node}=[font=\tiny] 
					\begin{axis}[ 
					width=.345\textwidth, 
					enlargelimits=false, 
					hide axis, 
					axis equal image, 
					axis on top
					]
					\addplot
					graphics[xmin=0, xmax=600, ymin=0, ymax=600] 
					{./images/strain/results_fatigue_new/image_1.png}; 
					\addplot[opacity=0.5]  
					graphics[xmin=0, xmax=600, ymin=0, ymax=600] 
					{./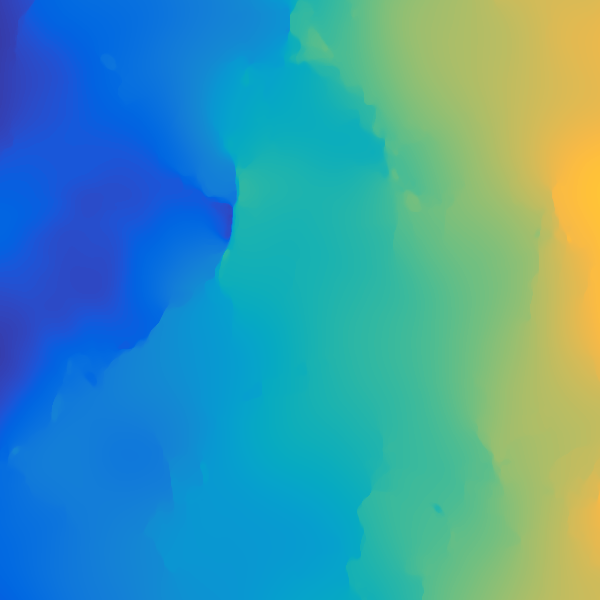}; 
					\end{axis} 
					\end{tikzpicture}} 
				&
				\raisebox{-.5\height}{
					\begin{tikzpicture}\tikzstyle{every node}=[font=\tiny] 
					\begin{axis}[ 
					width=.345\textwidth, 
					enlargelimits=false, 
					hide axis, 
					axis equal image, 
					axis on top
					]
					\addplot
					graphics[xmin=0, xmax=600, ymin=0, ymax=600] 
					{./images/strain/results_fatigue_new/image_1.png}; 
					\addplot[opacity=0.5]  
					graphics[xmin=0, xmax=600, ymin=0, ymax=600] 
					{./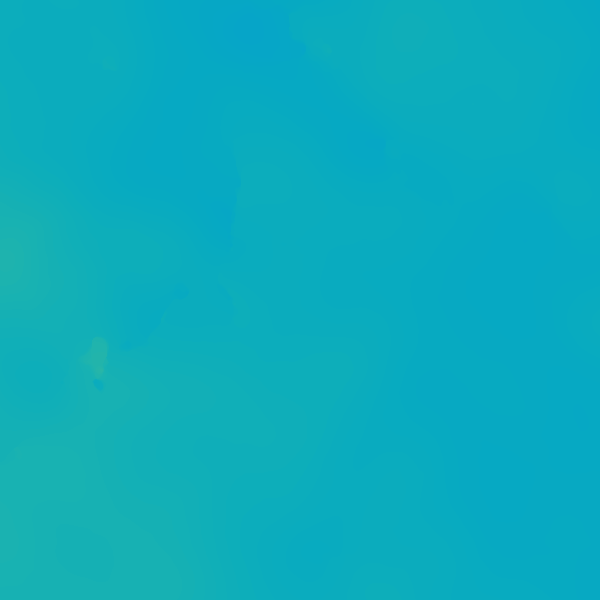}; 
					\end{axis} 
					\end{tikzpicture}}
				&
				\raisebox{-.5\height}{
					\begin{tikzpicture}\tikzstyle{every node}=[font=\tiny] 
					\begin{axis}[ 
					width=.345\textwidth, 
					enlargelimits=false, 
					hide axis, 
					axis equal image, 
					axis on top, 
					colorbar, 
					scaled ticks=false, 
					colormap name=parula, 
					colorbar right, 
					colorbar style={
						overlay, 
						width=0.15cm
					}
					]
					\addplot[point meta min=-0.5,
					point meta max=0.62]
					graphics[xmin=0, xmax=600, ymin=0, ymax=600] 
					{./images/strain/results_fatigue_new/image_1.png}; 
					\addplot[opacity=0.5]  
					graphics[xmin=0, xmax=600, ymin=0, ymax=600] 
					{./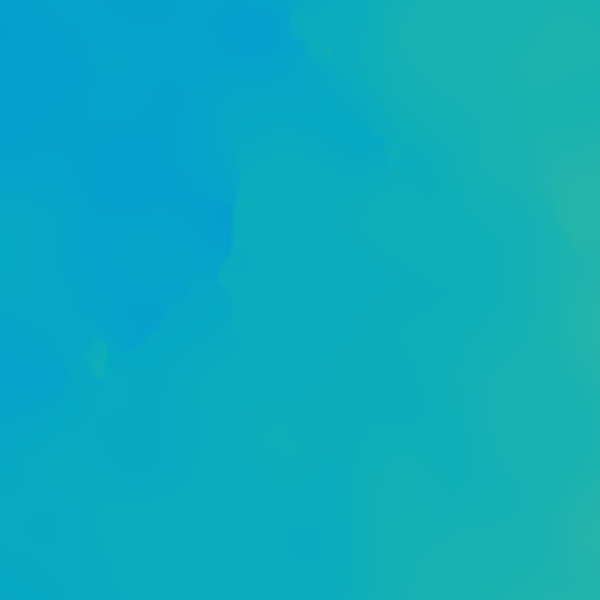}; 
					\end{axis} 
					\end{tikzpicture}} 
				\hspace{0.5cm}
				\\[8ex]
				$\nabla_x u_1$
				&
				\raisebox{-.5\height}{
					\begin{tikzpicture}\tikzstyle{every node}=[font=\tiny] 
					\begin{axis}[ 
					width=.345\textwidth, 
					enlargelimits=false, 
					hide axis, 
					axis equal image, 
					axis on top
					]
					\addplot
					graphics[xmin=0, xmax=600, ymin=0, ymax=600] 
					{./images/strain/results_fatigue_new/image_1.png}; 
					\addplot[opacity=0.5]  
					graphics[xmin=0, xmax=600, ymin=0, ymax=600] 
					{./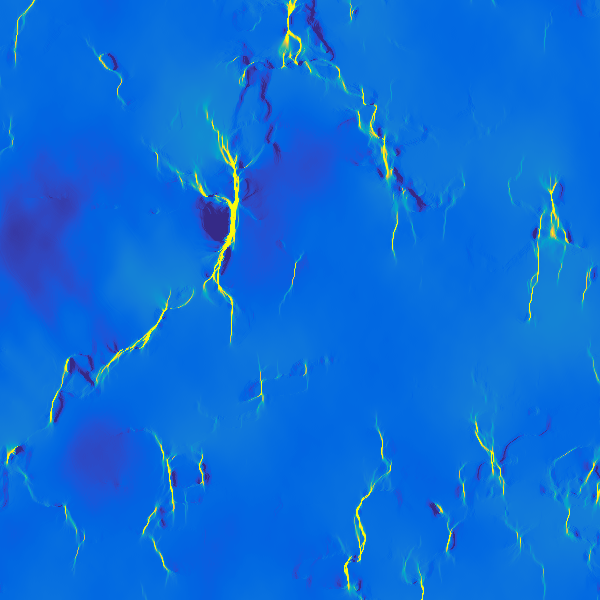}; 
					\end{axis} 
					\end{tikzpicture}} 
				&
				\raisebox{-.5\height}{
					\begin{tikzpicture}\tikzstyle{every node}=[font=\tiny] 
					\begin{axis}[ 
					width=.345\textwidth, 
					enlargelimits=false, 
					hide axis, 
					axis equal image, 
					axis on top
					]
					\addplot
					graphics[xmin=0, xmax=600, ymin=0, ymax=600] 
					{./images/strain/results_fatigue_new/image_1.png}; 
					\addplot[opacity=0.5]  
					graphics[xmin=0, xmax=600, ymin=0, ymax=600] 
					{./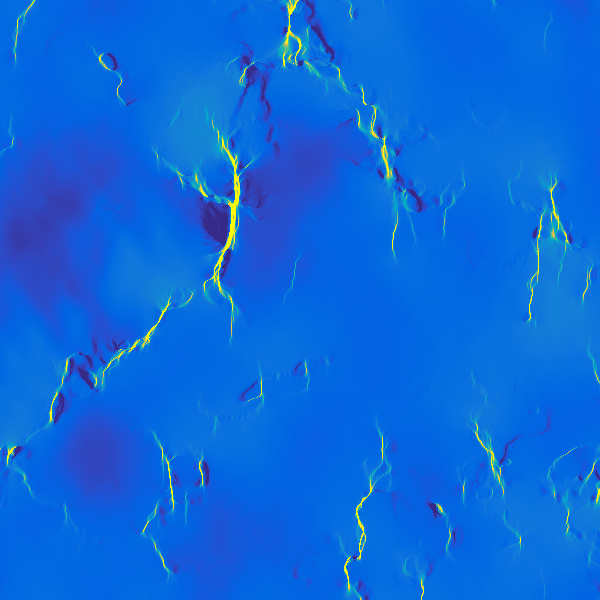}; 
					\end{axis} 
					\end{tikzpicture}} 
				&
				\raisebox{-.5\height}{
					\begin{tikzpicture}\tikzstyle{every node}=[font=\tiny] 
					\begin{axis}[ 
					width=.345\textwidth, 
					enlargelimits=false, 
					hide axis, 
					axis equal image, 
					axis on top
					]
					\addplot
					graphics[xmin=0, xmax=600, ymin=0, ymax=600] 
					{./images/strain/results_fatigue_new/image_1.png}; 
					\addplot[opacity=0.5]  
					graphics[xmin=0, xmax=600, ymin=0, ymax=600] 
					{./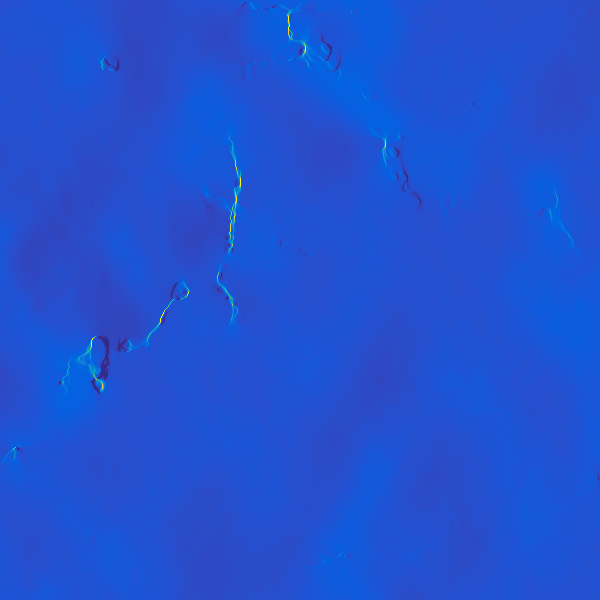}; 
					\end{axis} 
					\end{tikzpicture}} 
				&
				\raisebox{-.5\height}{
					\begin{tikzpicture}\tikzstyle{every node}=[font=\tiny] 
					\begin{axis}[ 
					width=.345\textwidth, 
					enlargelimits=false, 
					hide axis, 
					axis equal image, 
					axis on top, 
					colorbar, 
					scaled ticks=false, 
					colormap name=parula, 
					colorbar right, 
					colorbar style={overlay, width=0.15cm, yticklabel style={
							scaled ticks=false,
							/pgf/number format/.cd,
							fixed,
							fixed zerofill,
							/tikz/.cd}}
					]
					\addplot[point meta min=-0.0829,
					point meta max=0.613]
					graphics[xmin=0, xmax=600, ymin=0, ymax=600] 
					{./images/strain/results_fatigue_new/image_1.png}; 
					\addplot[opacity=0.5]  
					graphics[xmin=0, xmax=600, ymin=0, ymax=600] 
					{./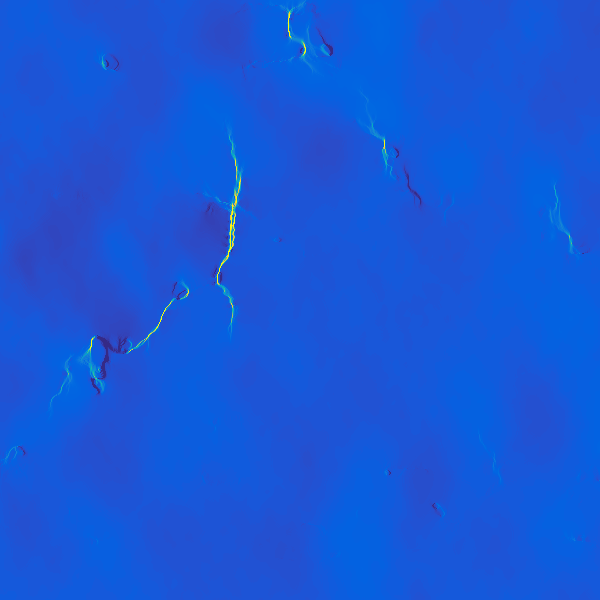}; 
					\end{axis} 
					\end{tikzpicture}} 
				\hspace{0.5cm}
			\end{tabular}
			\setlength{\tabcolsep}{6pt}
			\caption[]{Results for the states shown in (\subref{fig:fatigue:plot}).
				Displacement $u_1$ in \textmu m ($1$\,\textmu m $=32$\,px) and the strain $\nabla_x u_1$.\label{fig:fatigue:res}
			}
		\end{subfigure}
		\caption{ \label{fig:fatigue}
			Results for different states of a fatigue test.
			An exemplary hysteresis curve is shown in (\subref{fig:fatigue:plot}), the results for the states ``A''-``E'' are shown in (\subref{fig:fatigue:res}).		
		} 
	\end{figure}

	\section{Conclusions} \label{sec:strain:conc}
	In this paper, we proposed a variational optical flow model 
	for computing engineering strains on a microscale.
	Motivated by the setting of tensile tests, several first and a second order terms were evaluated for regularization.
	The method of our choice is a TGV regularized model with an optional constraint on the strain based on physical assumptions.
	The primal-dual method for finding a minimum of the corresponding energy function is especially suited since it directly computes the strain tensor within the iteration process.
	The results of the proposed model can be used for the detection of cracks and for an analysis of crack initiation and propagation.
	A comparison to state-of-the-art software used for strain analysis showed that the proposed method clearly outperforms this software, in particular when analyzing the local strain.
	Further, it highlights microstructural damage as an additional benefit.
	We have shown that, besides tensile tests with AMCs, the method is also applicable for different materials as well as in other experiments such as compression and fatigue tests.
	
	\section*{Acknowledgement}
	The authors thank K.~Lichtenberg from the ``Institute for Applied Materials (IAM)'' at the Karlsruhe Institute of Technology (KIT) for providing the data in Figure~\ref{fig:compress}.
	Funding by the German Research Foundation (DFG) 
	with\-in the Research Training Group 1932 ``Stochastic Models for Innovations in the Engineering Sciences'', project area P3, 
	is gratefully acknowledged.
	
	\appendix
	\section{Soft and Coupled Shrinkage}
	The following two propositions state well-known facts (see e.g.~\cite{BSS2016}), 
	which are required to compute the proximal maps in Algorithm~\ref{strain:alg1tgv}.
	
	\begin{proposition}
		Let $x \in \R$. 
		Then,
		\begin{align}
		\hat y = \argmin_{y\in\R} \left\{ \lambda |y| + \frac12 (y - x)^2\right\}
		\end{align}
		is given by the soft shrinkage
		\begin{align}\label{not:shr}
		\hat y = S_\lambda(x) \coloneqq & 
		\begin{cases} 
		0 & \mathrm{if} \ |x| \le \lambda,\\ 
		x(1- \frac{\lambda}{|x|}) & \mathrm{if} \ |x| > \lambda. 
		\end{cases} 
		\end{align} 
	\end{proposition}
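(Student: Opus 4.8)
The plan is to exploit that the objective $g(y) \coloneqq \lambda |y| + \frac12 (y-x)^2$ is proper, continuous, coercive, and strictly convex on $\R$ (being the sum of the convex function $\lambda|\cdot|$ and the strictly convex quadratic $\frac12(\cdot - x)^2$); hence it possesses a unique minimizer $\hat y$, which by Fermat's rule is characterized by $0 \in \partial g(\hat y)$. Since the quadratic summand is differentiable, the sum rule for subdifferentials gives $\partial g(y) = \lambda\, \partial |y| + (y - x)$, so the optimality condition reads $x - \hat y \in \lambda\, \partial |\hat y|$.

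Next I would carry out a short case distinction according to the sign of $\hat y$. If $\hat y > 0$, then $\partial|\hat y| = \{1\}$ and the condition forces $\hat y = x - \lambda$, which is consistent with $\hat y > 0$ exactly when $x > \lambda$. Symmetrically, $\hat y < 0$ yields $\hat y = x + \lambda$, consistent with $\hat y < 0$ exactly when $x < -\lambda$. Finally, $\hat y = 0$ requires $x \in \lambda\, \partial|0| = [-\lambda, \lambda]$, that is $|x| \le \lambda$. Collecting the three cases and rewriting $x - \lambda = x(1 - \lambda/|x|)$ for $x > \lambda$ and $x + \lambda = x(1 - \lambda/|x|)$ for $x < -\lambda$ produces exactly the stated formula for $S_\lambda(x)$.

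An alternative, even more elementary route avoids subdifferentials: restrict $g$ to the half-lines $y \ge 0$ and $y \le 0$, on each of which it is a smooth one-dimensional quadratic with unconstrained vertex at $x - \lambda$, respectively $x + \lambda$; projecting these vertices onto the corresponding half-lines, evaluating $g$ at the two resulting candidates (and at $y = 0$), and keeping the smaller value gives the same answer. I would base the main argument on Fermat's rule, since it is the cleaner of the two, and perhaps record the alternative as a one-line remark.

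There is essentially no hard step here. The only point that requires a little care is the consistency check in the case analysis — verifying that the sign hypothesis imposed on $\hat y$ matches precisely the range of $x$ for which the candidate value is admissible — together with the remark that strict convexity and coercivity secure existence and uniqueness of $\hat y$, so that the three cases are exhaustive and mutually exclusive and the characterization is complete.
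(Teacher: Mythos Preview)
Your argument is correct and complete: strict convexity plus coercivity gives existence and uniqueness, Fermat's rule with the sum rule for subdifferentials yields the optimality condition $x - \hat y \in \lambda\,\partial|\hat y|$, and the three-way case split on the sign of $\hat y$ recovers exactly the stated formula.

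As for comparison with the paper: there is nothing to compare. The paper does not prove this proposition at all; it merely records it as a ``well-known fact'' with a reference and moves on. Your write-up therefore supplies more than the paper does. If anything, the paper's style suggests that a one-line citation would have sufficed, but your subdifferential argument is the standard textbook proof and is perfectly appropriate here.
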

	
	\begin{proposition}
		Let $x \in \R^d$. 
		Then,
		\begin{align}
		\hat y = \argmin_{y\in\R^d} \left\{ \lambda \|y\|_2 + \frac12 \|y - x\|_2^2\right\}
		\end{align}
		is given by the grouped or coupled shrinkage
		\begin{align}
		\hat y = \mathbf{S}_\lambda(x) \coloneqq 
		\begin{cases}
		0 & \mathrm{if} \ \| x\|_2 \le \lambda,\\ 
		x(1-\frac{\lambda}{\|x\|_2}) & \mathrm{if} \ \|x\|_2 > \lambda.
		\end{cases}
		\end{align}
	\end{proposition}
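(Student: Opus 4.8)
The plan is to characterize the unique minimizer through the first-order optimality condition for convex functions. Write $g(y) \coloneqq \lambda \|y\|_2 + \tfrac12 \|y-x\|_2^2$. This function is proper and convex, and the quadratic term $\tfrac12\|y-x\|_2^2$ is strictly convex and coercive, so $g$ has a unique minimizer $\hat y$. First I would note that, since $g$ is a sum of two convex functions with the quadratic part finite everywhere, the minimizer is characterized by $0 \in \partial g(\hat y) = \lambda\,\partial\|\hat y\|_2 + (\hat y - x)$, equivalently $x - \hat y \in \lambda\,\partial \|\hat y\|_2$.

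The key ingredient is the subdifferential of the Euclidean norm: for $y \neq 0$ one has $\partial\|y\|_2 = \{ y/\|y\|_2 \}$, while at the origin $\partial\|0\|_2$ equals the closed Euclidean unit ball $\{ v : \|v\|_2 \le 1\}$. With this, I would split into two cases according to whether the candidate minimizer is zero. If $\hat y = 0$, the optimality condition reads $x \in \lambda \{ v : \|v\|_2 \le 1\}$, i.e.\ $\|x\|_2 \le \lambda$; conversely this inclusion shows that $\hat y = 0$ solves the problem whenever $\|x\|_2 \le \lambda$.

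If instead $\hat y \neq 0$, the condition becomes the equation $x - \hat y = \lambda\,\hat y/\|\hat y\|_2$, which forces $x = (1 + \lambda/\|\hat y\|_2)\,\hat y$; hence $\hat y$ is a positive multiple of $x$, say $\hat y = t x$ with $t > 0$. Substituting and comparing coefficients gives $t = 1 - \lambda/\|x\|_2$, which is positive precisely when $\|x\|_2 > \lambda$. This yields $\hat y = x\,(1 - \lambda/\|x\|_2)$ and matches the stated formula on the complementary regime.

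The only genuinely delicate point is the nonsmoothness of $\|\cdot\|_2$ at the origin: it is exactly the set-valuedness of $\partial\|0\|_2$ that produces the thresholding and the switch at $\|x\|_2 = \lambda$; the rest is elementary algebra. As an alternative route that reuses the preceding scalar proposition, I would observe that for fixed radius $\|y\|_2 = r$ the term $\|y-x\|_2^2$ is minimized by aligning $y$ with $x$, so the minimizer has the form $\hat y = r\,x/\|x\|_2$ with $r \ge 0$; plugging in reduces the problem to $\min_{r \ge 0}\{\lambda r + \tfrac12 (r - \|x\|_2)^2\}$, a one-dimensional (nonnegatively constrained) soft-shrinkage whose solution is $r = \max\{\|x\|_2 - \lambda, 0\}$, recovering the same formula.
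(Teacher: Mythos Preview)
Your argument is correct: the subdifferential characterization $x-\hat y\in\lambda\,\partial\|\hat y\|_2$ together with the standard formula for $\partial\|\cdot\|_2$ yields exactly the two regimes, and your alternative one-dimensional reduction via $\hat y=r\,x/\|x\|_2$ is also valid. Note, however, that the paper does not actually prove this proposition; it simply records it as a well-known fact with a reference, so your proposal supplies strictly more than the paper does rather than taking a different route to the same proof.
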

	
	\noindent
	\textbf{Computation of~\eqref{genshrink:eq}.}
	Finally, we compute for given $x \in \R^2$ the value
	\begin{align}\label{eq:genshrinkeq}
	\hat y = \argmin_{y\in\R^2} \left\{|\alpha y_1 + \beta y_2 + \gamma| + \frac12 \|x-y\|_2^2\right\},
	\end{align}
	where $\alpha, \beta, \gamma \in \R$.	
	We need to distinguish the following cases:
	\begin{enumerate}
		\item
		$\alpha = \beta = 0$: This gives obviously $\hat y=x$.	
		\item
		$\alpha = 0, \beta \ne 0 $: Here, we get
		$\hat y_1 = x_1$
		and
		\begin{align}
		\hat y_2 &= \argmin_{y_2 \in \R} \left\{| \beta y_2 + \gamma | + \frac{1}{2} (x_2-y_2)^2\right\} \\
		&= \argmin_{y_2 \in \R} \left\{| y_2 + \frac{\gamma}{\beta} | + \frac{1}{2|\beta|} (x_2-y_2)^2\right\} \\
		&= S_{|\beta|}\left(x_2+\frac{\gamma}{\beta}\right)-\frac{\gamma}{\beta}
		\end{align}
		with the soft shrinkage operator $S_{|\beta|}$
		
		For $\alpha \ne 0, \beta = 0$, we get analogously
		\begin{align}
		(\hat y_1, \hat y_2) = \left( S_{|\alpha|}\left(x_1+\frac{\gamma}{\alpha}\right) -\frac{\gamma}{\alpha}, x_2\right) .
		\end{align}				
		\item
		$\alpha, \beta \ne 0$:
		By substitution, we get the following equivalent minimization problem:
		\begin{align}\label{lemma2}
		(\hat z_1, \hat z_2) &= \argmin_{z \in \R^2} \left\{|z_1 + z_2| + \frac{1}{2\lambda_1} (z_1-\tilde x_1)^2 + \frac{1}{2\lambda_2} (z_2-\tilde x_2)^2 \right\}
		\end{align}
		with
		\begin{align}
		& z_1 \coloneqq \alpha y_1, 	
		\quad	
		& \tilde x_1 &\coloneqq \alpha x_1, 		
		\quad 	
		&\lambda_1 &\coloneqq \alpha^2 > 0, 
		\\   
		& z_2 \coloneqq \beta y_2 + \gamma,
		\quad	
		& \tilde x_2 &\coloneqq \beta x_2+\gamma, 
		\quad 	
		&\lambda_2 &\coloneqq \beta^2 > 0.
		\end{align}
	\end{enumerate}
	The following proposition provides a solution to the minimization problem.				
	\begin{proposition} \label{lemma:shrink}
		Let $(x_1,x_2)^\tT \in \R^2$. 
		Then, the unique minimizer of 
		\begin{align}
		F(z_1,z_2) \coloneqq |z_1 + z_2| + \frac{1}{2\lambda_1} (z_1-\tilde x_1)^2 + \frac{1}{2\lambda_2} (z_2-\tilde x_2)^2
		\end{align}
		is given by
		\begin{align}\label{not:genshr}
		(\hat z_1, \hat z_2) = \begin{cases} 
		\left( \tilde x_1-\lambda_1,  \tilde x_2-\lambda_2 \right) & \mathrm{if } \ \tilde x_1 + \tilde x_2 > \lambda_1 + \lambda_2, \\
		\left( \tilde x_1+\lambda_1,  \tilde x_2+\lambda_2 \right) & \mathrm{if } \ \tilde x_1 + \tilde x_2 < -(\lambda_1 + \lambda_2), \\
		\left( \frac{\tilde x_1 \lambda_2 - \lambda_1 \tilde x_2}{\lambda_1 + \lambda_2}, \frac{\tilde x_2 \lambda_1 - \lambda_2 \tilde x_1}{\lambda_1 + \lambda_2} \right) & \mathrm{otherwise.} \end{cases}
		\end{align}
	\end{proposition}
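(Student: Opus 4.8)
The plan is to use that $F$ is the sum of the convex function $(z_1,z_2)\mapsto|z_1+z_2|$ and the quadratic $\frac{1}{2\lambda_1}(z_1-\tilde x_1)^2+\frac{1}{2\lambda_2}(z_2-\tilde x_2)^2$, whose Hessian $\Diag(1/\lambda_1,1/\lambda_2)$ is positive definite. Hence $F$ is continuous, strictly convex and coercive, so it has a unique minimizer, and it suffices to exhibit a point satisfying the first‑order optimality condition and check that the formula in the statement provides it.

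First I would reduce the two‑dimensional problem to a one‑dimensional one by partial minimization. Setting $t\coloneqq z_1+z_2$ and minimizing the quadratic part over the line $\{z_1+z_2=t\}$ for fixed $t$ is an equality‑constrained strictly convex quadratic program; one Lagrange multiplier $\mu$ yields $z_1=\tilde x_1+\lambda_1\mu$, $z_2=\tilde x_2+\lambda_2\mu$ with $\mu=\frac{t-\tilde x_1-\tilde x_2}{\lambda_1+\lambda_2}$ and minimal value $\frac{(t-\tilde x_1-\tilde x_2)^2}{2(\lambda_1+\lambda_2)}$. Since for each $t$ this inner minimizer is unique, we get $\min_{z_1,z_2}F=\min_{t\in\R}\bigl\{|t|+\tfrac{1}{2(\lambda_1+\lambda_2)}(t-(\tilde x_1+\tilde x_2))^2\bigr\}$, and after multiplying the inner objective by $\lambda_1+\lambda_2$ this is precisely a soft‑shrinkage problem in the sense of the first proposition of this appendix; thus the optimal $t$ is $\hat t=S_{\lambda_1+\lambda_2}(\tilde x_1+\tilde x_2)$, which is also unique. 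Composing the unique outer and inner minimizers recovers the unique minimizer $(\hat z_1,\hat z_2)$ of $F$.

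It then remains to back‑substitute $\hat t$ and simplify in the three regimes of the shrinkage operator. For $\tilde x_1+\tilde x_2>\lambda_1+\lambda_2$ one has $\hat t=\tilde x_1+\tilde x_2-(\lambda_1+\lambda_2)$, hence $\mu=-1$ and $(\hat z_1,\hat z_2)=(\tilde x_1-\lambda_1,\tilde x_2-\lambda_2)$; symmetrically, $\tilde x_1+\tilde x_2<-(\lambda_1+\lambda_2)$ gives $\mu=1$ and $(\hat z_1,\hat z_2)=(\tilde x_1+\lambda_1,\tilde x_2+\lambda_2)$; and in the remaining interval $\hat t=0$, $\mu=-\frac{\tilde x_1+\tilde x_2}{\lambda_1+\lambda_2}$, which after an elementary rearrangement produces the third branch $\bigl(\frac{\tilde x_1\lambda_2-\lambda_1\tilde x_2}{\lambda_1+\lambda_2},\frac{\tilde x_2\lambda_1-\lambda_2\tilde x_1}{\lambda_1+\lambda_2}\bigr)$. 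Because the three conditions on $\tilde x_1+\tilde x_2$ partition $\R$ and the minimizer is unique, this establishes the claimed formula. As an alternative one can verify it directly from $0\in\partial F(\hat z)$, using $\partial|z_1+z_2|=\{(s,s):s\in\partial|\cdot|(z_1+z_2)\}$ together with the gradient of the quadratic part, which leads to the same case distinction via $z_i=\tilde x_i-\lambda_i s$.

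I do not expect a genuine obstacle: the only points needing a little care are the legitimacy of the partial‑minimization step (the reduced objective is convex since its quadratic summand is convex in $t$) and the slightly tedious algebraic simplification of the third branch; both are routine.
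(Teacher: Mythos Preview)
Your argument is correct, but it follows a different route from the paper. The paper proceeds by a direct case split on the sign of $\hat z_1+\hat z_2$: in the region where $\hat z_1+\hat z_2>0$ (resp.\ $<0$) it sets the gradient of $F$ to zero and checks consistency of the resulting formula with the sign assumption, and for $\hat z_1+\hat z_2=0$ it substitutes $\hat z_2=-\hat z_1$ and minimizes the remaining one-variable quadratic. Your approach instead performs a partial minimization over the line $\{z_1+z_2=t\}$ via a Lagrange multiplier, reducing the problem to the scalar soft-shrinkage problem already stated in the appendix, and then back-substitutes. The advantage of your route is that it makes the connection to the one-dimensional shrinkage explicit and handles existence, uniqueness, and exhaustiveness of the cases in one stroke (via strict convexity of $F$ and the known shrinkage formula for $\hat t$); the paper's argument is shorter and avoids the Lagrange machinery but leaves the verification that the three cases partition the parameter space implicit. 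Your alternative via $0\in\partial F(\hat z)$ with $\partial|z_1+z_2|=\{(s,s):s\in\partial|\cdot|(z_1+z_2)\}$ is essentially what the paper does, just phrased in subdifferential language.
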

	\begin{proof}
		Since $F$ is not differentiable at $ z_1 = - z_2$, we distinguish whether $\hat z_1 + \hat z_2 = 0$ or not. 
		\begin{enumerate}
			\item
			$\hat z_1 + \hat z_2 \ne 0$:
			First, we assume that $\hat z_1 + \hat z_2 >0$.
			Since $F$ is differentiable around $\hat z$, we can compute the gradient and set it to zero, i.e.,
			\begin{align}
			0 &= \begin{pmatrix} 1 \\ 1 \end{pmatrix} + \begin{pmatrix} \frac{1}{\lambda_1}(\hat z_1 - \tilde x_1) \\ \frac{1}{\lambda_2}(\hat z_2 - \tilde x_2) \end{pmatrix}.
			\end{align}
			Hence, we obtain
			\begin{align}
			(\hat z_1, \hat z_2) = \left( \tilde x_1-\lambda_1,  \tilde x_2-\lambda_2 \right)
			\end{align}
			and $ \hat z_1 + \hat z_2 > 0$ if and only if $\tilde x_1 + \tilde x_2 > \lambda_1 + \lambda_2$.
			Analogously, one can show 
			$(\hat z_1, \hat z_2) = \left( \tilde x_1+\lambda_1,  \tilde x_2+\lambda_2 \right)$ for $\tilde x_1 + \tilde x_2 < -(\lambda_1 + \lambda_2)$.
			\item
			$ \hat z_1 + \hat z_2 = 0$:
			Then $\hat z_1 = -\hat z_2$ and we obtain 
			\begin{align}
			\hat z_1 &= \argmin_{z_1 \in \R} \left\{ \frac{1}{2\lambda_1} (z_1-\tilde x_1)^2 + \frac{1}{2\lambda_2}(z_1+\tilde x_2)^2 \right\}.
			\end{align}
			Setting the derivative to zero leads to
			\begin{align}
			0 = \frac{1}{\lambda_1} (\hat z_1-\tilde x_1) + \frac{1}{\lambda_2} (\hat z_1+\tilde x_2).
			\end{align}
			Thus, the solution is given by
			\begin{align}
			\hat z_1 = \frac{\tilde x_1 \lambda_2 - \lambda_1 \tilde x_2}{\lambda_1 + \lambda_2} \quad \mathrm{and} \quad
			\hat z_2 = \frac{\tilde x_2 \lambda_1 - \lambda_2 \tilde x_1}{\lambda_1 + \lambda_2}.				
			\end{align}
		\end{enumerate}
	\end{proof}
	\begin{remark}
		The soft shrinkage \eqref{not:shr} is obtained in the special case $\lambda_1 = \lambda_2$ and $\tilde x_1 = \tilde x_2$.
	\end{remark}	
	
	\bibliography{refs_phd_thesis_fitschen_clean_v2}
	\bibliographystyle{abbrv}
\end{document}